\documentclass[12pt]{amsart}
\usepackage{amssymb, tensor, fullpage, textcomp, pb-diagram, wasysym, wrapfig, comment, xspace, leftindex, xcolor, appendix, tensor, relsize}
\usepackage[Symbol]{upgreek}
\usepackage[mathscr]{euscript}
\usepackage[shortlabels]{enumitem}

\let\oldtocsection=\tocsection

\let\oldtocsubsection=\tocsubsection

\let\oldtocsubsubsection=\tocsubsubsection

\renewcommand{\tocsection}[2]{\hspace{0em}\oldtocsection{#1}{#2}}
\renewcommand{\tocsubsection}[2]{\hspace{1em}\oldtocsubsection{#1}{#2}}
\renewcommand{\tocsubsubsection}[2]{\hspace{2em}\oldtocsubsubsection{#1}{#2}}

\DeclareFontFamily{U}{fsy}{}
\DeclareFontShape{U}{fsy}{m}{n}{<->s*[.9]psyr}{}
\DeclareSymbolFont{der@m}{U}{fsy}{m}{n}
\DeclareMathSymbol{\der}{\mathord}{der@m}{182}

\newcommand{\ncdf}{\mathrm{NCD}}

\newcommand{\pusix}{\R\langle\langle\varepsilon\rangle\rangle}
\newcommand{\tdense}{T_\mathrm{dense}}

\newcommand{\dkmin}{D^\kappa(T)_{\_}}

\newcommand{\dkap}{D^\kappa(T)}
\newcommand{\sdkap}{D^\kappa(T^*)}

\newcommand{\kalg}{K^{\mathrm{alg}}}

\newcommand{\dcf}{\mathrm{DCF}}

\newcommand{\D}{\mathbb{D}}
\newcommand{\scfpe}{\mathrm{SCF}_{p,e}}
\newcommand{\acvfp}{\mathrm{ACVF}_p}
\newcommand{\scvfpe}{\mathrm{SCVF}_{p, e}}
\newcommand{\nset}{\{1,\ldots,n\}}
\newcommand{\gik}{(g_i)_{i < \kappa}}

\newcommand{\fik}{(f_i)_{i < \kappa}}
\newcommand{\pik}{(\uppi_i)_{i < \kappa}}

\newcommand{\dik}{(\der_i)_{i < \kappa}}

\newcommand{\T}{\mathbb{T}}

\DeclareSymbolFont{imag@m}{OT1}{cmr}{m}{ui}
\DeclareMathSymbol{\imag}{\mathord}{imag@m}{105}

\newtheorem{theorem}{Theorem}[section]
\newtheorem*{theorem*}{Theorem}

\newtheorem*{qst*}{Question}
\newtheorem{thm}[theorem]{Theorem}
\newtheorem{proposition}[theorem]{Proposition}
\newtheorem*{proposition*}{Proposition}

\newtheorem*{Claim*}{Claim}
\newtheorem{fact}[theorem]{Fact}

\newtheorem{lemma}[theorem]{Lemma}
\newtheorem{corollary}[theorem]{Corollary}

\theoremstyle{definition}

\newtheorem{definition}[theorem]{Definition}

\theoremstyle{remark}

\newcommand{\acf}{\mathrm{ACF}}
\newcommand{\mfrak}{\mathfrak{m}}

\newcommand{\rgoup}{(\R;+,<)}

\newcommand{\rfield}{(\R;+,\times)}
\newcommand{\rcf}{\mathrm{RCF}}

\newcommand{\doag}{\mathrm{DOAG}}

\newcommand{\vvec}{\mathrm{Vec}}
\newcommand{\triv}{\mathrm{Triv}}

\newcommand{\Th}{\mathrm{Th}}
\ProvideTextCommandDefault{\cprime}{(U+042C)}

\newcommand{\Fraisse}{Fra\"iss\'e\xspace}

\newcommand{\st}{\operatorname{st}}

\newenvironment{claimproof}[1][\proofname]
               {
                 \proof[#1]
                 
               }
               {
                 \endproof
               }

\newcommand{\Sh}[1]{\ensuremath{\mathscr{#1}^{\mathrm{Sh}}}}

\newcommand{\nip}{\mathrm{NIP}}
\newcommand{\ip}{\mathrm{IP}}

\newcommand{\Cal}[1]{\ensuremath{\mathcal{#1}}}
\newcommand{\Sa}[1]{\ensuremath{\mathscr{#1}}}

\newcommand{\Z}{\mathbb{Z}}
\newcommand{\N}{\mathbb{N}}

\newcommand{\Q}{\mathbb{Q}}
\newcommand{\R}{\mathbb{R}}
\newcommand{\F}{\mathbb{F}}
\newcommand{\E}{\mathbb{E}}

\begin{document}
\title[]{Trace definability III:\\ Infinite dimensional space over a model of $T$}

\author{Erik Walsberg}
\email{erik.walsberg@gmail.com}
\begin{abstract}
We show that for a number of theories $T^*$ of model-theoretic interest there is a simpler theory $T$ and $\kappa \ge \aleph_0$ such that $T^*$ is trace equivalent to the theory of $\kappa$-dimensional space over a model of $T$.
\end{abstract}

\maketitle
\section*{Introduction}
Let $T$ be a (complete, consistent) first order theory, $\Sa M$ be a model of $T$, and $\kappa$ be an infinite cardinal.
We are concerned with the $\kappa$-dimensional space $M^\kappa$ over $\Sa M$.
We associate to this a two-sorted structure $(M^\kappa, \Sa M, \pik)$, where each $\uppi_i$ is the projection $M^\kappa \to M$ onto the $i$th coordinate.
Let $\dkap$ be the theory of such structures.
We consider the relation between this theory and two natural model-theoretic notions of reducibility between structures/theories introduced in \cite{trace1, trace2}.
They are analogous to Turing or Borel reducibility, or various other notions of reducibility considered by logicians.
We show that $T \mapsto D^\kappa(T)$ reduces the second ``local" reducibility notion to the first ``global" notion.
In turn, this produces a third sort of reducibility between theories.
This third notion allows us to give a precise meaning to the following analogy.
\[
\text{$\acf_0$ : $\dcf^{\aleph_0}$ :: $\acf_p$ : $\scfpe$}
\]
We now recall our notions of reducibility.
A structure $\Sa N$ {\bf trace defines} $\Sa M$ if there is an injection $\uptau \colon M \hookrightarrow N^n$ for some $n \ge 1$ such that every $\Sa M$-definable subset of every $M^m$ is the pullback of an $\Sa N$-definable set via $\uptau$ and $\Sa N$ {\bf locally trace defines} $\Sa M$ if for any $\Sa M$-definable sets $X_1, \ldots, X_k$ there is an injection $\uptau \colon M \hookrightarrow N^n$ for some $n \ge 1$ such that each $X_i$ is the pullback of an $\Sa N$-definable set via $\uptau$.
A theory (locally) trace defines another if every model of the second is (locally) trace definable in a model of the first, two theories are (locally) trace equivalent if each (locally) trace defines the other, and two structures are (locally) trace equivalent when their theories are.
Classification-theoretic properties such as stability, $\nip$, $k$-$\nip$, superstability, total transcendence, strong dependence, finiteness of Morley rank, finiteness of dp-rank, etc, are preserved under trace definability.
In fact, properties that are preserved under (local) trace definability can often be characterized in terms of (local) trace definability.
For example a theory $T$ is stable if and only if $T$ does not trace define $(\Q; <)$ if and only if $T$ does not locally trace define $(\Q ; <)$.
(Note that it follows that many other  model-theoretic properties, such as $\mathrm{NSOP}$, are {\it not} preserved under trace definability.)
Likewise, $T$ is $k$-$\nip$ if and only if $T$ does not trace define the generic $(k + 1)$-hypergraph if and only if $T$ does not locally trace define the generic $(k + 1)$-hypergraph.

\medskip
Local trace definability is clearly a coarser relation than trace definability, but it also reduces to trace definability.
Suppose that $T, T^*$ are theories in languages of cardinality at most $\kappa$.
We show that $T$ locally trace defines $T^*$ if and only if $\dkap$ trace defines $T^*$ if and only if $\dkap$ trace defines $\sdkap$.
We give examples of theories $T'$ which are trace equivalent to $\dkap$ for a simpler theory $T$ in a language of cardinality $\le \kappa = |T'|$.
We view this as another way of reducing $T'$ to $T$.
In this case $T$ locally trace defines a theory $T^*$ in a language of cardinality $\le \kappa$ if and only if $T'$ trace defines $T^*$.
We now give some examples.
\begin{itemize}[leftmargin=*]
\item The theory $\scfpe$ of separably closed fields of Ershov invariant $e \in \N_{\ge 1}$ and characteristic $p$  is trace equivalent to $D^{\aleph_0}(\acf_p)$.
\item The theory of a differentially closed field of characteristic zero with $\kappa$ commuting derivations is trace equivalent to $D^\kappa(\acf_0)$.
The theory of a differentially closed field of characteristic zero  with $\lambda \ge 2$ non-commuting derivations is trace equivalent to $D^{\lambda + \aleph_0}(\acf_0)$.
\item The theory of the ordered differential field of transseries and the theory of tame pairs of real closed fields are both trace equivalent to $D^{\aleph_0}(\rcf)$.
\item If $T$ is the theory of a non-algebraically closed characteristic zero $\nip$ henselian field then the relative model companion of the theory of a model of $T$ equipped with $\lambda \ge 1$ commuting derivations is trace equivalent to $D^{\lambda + \aleph_0}(T)$.
In particular the model companion of the theory of an ordered field equipped with $\lambda \ge 1$ commuting derivations is trace equivalent to $D^{\lambda + \aleph_0}(\rcf)$.
An analogue of this also holds for o-minimal theories expanding $\rcf$.
\item Let $\triv$ be the theory of an infinite set with equality.
The theory of a set equipped with $\kappa$ crosscutting equivalence relations is trace equivalent to $D^\kappa(\triv)$.
The model companion of the theory of a set equipped with $\lambda \ge 2$ unary functions is trace equivalent to $D^{\lambda + \aleph_0}(\triv)$.
The theory of the free J\'onsson-Tarski algebra is trace equivalent to $D^{\aleph_0}(\triv)$.
\item Let $\F$ be a field and $\vvec_\F$ be the theory of infinite $\F$-vector spaces.
The model companion of the theory of an $\F$-vector space equipped with $\kappa$ commuting endomorphisms is trace equivalent to $D^\kappa(\vvec_\F)$.
The model companion of the theory of an $\F$-vector space equipped with $\lambda \ge 2$ endomorphisms is trace equivalent to $D^{\lambda + \aleph_0}(\vvec_\F)$.
\item A structure is locally trace definable in $\vvec_\F$ if and only if it is trace definable in a module over an $\F$-algebra if and only if it is locally trace definable in a module over an $\F$-algebra.
If $\F$ has positive characteristic then a structure is locally trace definable in $\vvec_\F$ if and only if it is trace definable in a one-based expansion of an $\F$-vector space if and only if it locally trace definable in a one-based expansion of an $\F$-vector space.
\end{itemize}

We will also use $\dkap$ as a tool to give results about trace definability.
For example, we show that the theory of tame pairs of real closed fields trace defines the theory of dense pairs of real closed fields, but not vice versa.

\medskip
We finally explain why the connection between local trace definability and $\dkap$ is natural.
Let $I$ be an arbitrary index set and $x = (x_i)_{i \in I}$ be an $I$-tuple of variables.
We say that a subset of $M^I$ is $\Sa M$-definable if it is of the form $\{ a \in M^I : \Sa M \models \varphi(a) \}$ for an $L(M)$-formula $\varphi(x)$ with free variables from the $x_i$.
Of course, when $I = \nset$ this is the usual notion of an $\Sa M$-definable set.
The infinite case is equally natural as the boolean algebra of $\Sa M$-definable subsets of $M^I$ is isomorphic to the Lindenbaum algebra of $L(M)$-formulas with variables from the $x_i$.
We let $nI$ be the disjoint union of $n$ copies of $I$ and identify $M^{nI}$ with $(M^I)^n$.
We showed in \cite[Prop.~2.4]{trace2} that $\Sa O$ is locally trace definable in $\Sa M$ if and only if there is an injection $\uptau \colon O \hookrightarrow M^I$ for some $I$ such that every $\Sa O$-definable subset of every $O^m$ is the pullback of an $\Sa M$-definable subset of $M^{mI}$ via $\uptau$.
(This also follows from the proof of Proposition~\ref{prop:prekey} below.)
Let $(M^\kappa, \Sa M, \pik)$ be as above.
Any $\Sa M$-definable subset of $(M^\kappa)^n$ is clearly definable (in the usual sense) in $(M^\kappa, \Sa M, \pik)$.
It follows that any structure locally trace definable in $\Sa M$ is trace definable in $(M^\kappa, \Sa M, \pik)$ for sufficiently large $\kappa$.
Moreover, Proposition~\ref{prop:pik1}(4) below shows that $\dkap$ admits quantifier elimination relative to $T$.
It follows that a subset of $(M^\kappa)^n$ is definable in $(M^\kappa, \Sa M, \pik)$ if and only if it is a boolean combination of sets definable in the language of equality and $\Sa M$-definable subsets of $(M^\kappa)^n$.
It easily follows that $(M^\kappa, \Sa M, \pik)$ is locally trace equivalent to $\Sa M$.


\medskip
This research was funded in part by the Austrian Science Fund (FWF) 10.55776/PAT1673125.

\section{Conventions and background}\label{section:c & b}

\subsection{Conventions}\label{section:conventions}
Throughout $m, n, k, l, d$ are natural numbers (including $0$) and $\kappa$ is a cardinal.
All languages, structures, and theories are first order and all theories are consistent, complete, and deductively closed unless stated otherwise.
These assumptions ensure that any $L$-theory $T$ has cardinality $|L| + \aleph_0$, so if $\kappa$ is an infinite cardinal then $|T| \le \kappa$ if and only if $|L| \le \kappa$.
Throughout ``definable" without modification means ``first order definable, possibly with parameters".
We generally assume that all structures are infinite and all theories have infinite models.
Given a language $L$, structure $\Sa M$, and $A \subseteq M$, we let $L(A)$ be the expansion of $L$ by constant symbols defining the elements of $A$.

\medskip
Given a set $X$ and function $g \colon X \to X$ we let $g^{(n)} \colon X \to X$ be the $n$-fold compositional iterate of $g$ for each $n$.
So $g^{(0)}$ is the identity on $X$ and we have $g^{(n)} = g \circ g^{(n - 1)}$ for all $n \ge 1$.

\medskip
Two structures on a common domain are {\bf interdefinable} if they define the same sets.
Two structures on possibly different domains are {\bf bidefinable} if they are interdefinable up to isomorphism.
Two theories are {\bf definitionally equivalent} if they are the same up to ``change of language", see \cite[\S~2.6.2]{Hodges} for a precise definition.

\medskip
Given a structure $\Sa M$ and $A \subseteq M^m$, the structure {\bf induced} on $A$ by $\Sa M$ is the structure with an $n$-ary relation defining $Y \cap A^n$ for every $\Sa M$-definable $Y \subseteq M^{nm}$.

\medskip
A theory is {\bf geometric} if it eliminates $\exists^\infty$ and algebraic closure satisfies exchange.


\subsection{Background}\label{section:bckgrnd}
We recall background on (local) trace definability from \cite{trace1, trace2}.
For the benefit of the reader we also provide brief sketches of the proofs.

\begin{definition}\label{def:trace}
We say that $\Sa M$ is {\bf trace definable} in $\Sa N$ if one of the following equivalent conditions holds.
\begin{enumerate}[leftmargin=*]
\item Up to isomorphism, $M$ is a subset of some $N^n$ and every $\Sa M$-definable subset of every $M^m$ is of the form\footnote{Think of $Y \cap M^m$ as the ``trace" of $Y$ on $M$ and $\Sa M$ as ``definable via traces" in $\Sa N$.} $Y \cap M^m$ for some $\Sa N$-definable $Y \subseteq N^{nm}$.
\item There is an injection $\uptau \colon M \hookrightarrow N^n$ for some $n$ such that every $\Sa M$-definable subset of every $M^m$ is of the form
\[
\{ (a_1, \ldots, a_m) \in M^m : (\uptau(a_1), \ldots, \uptau(a_m)) \in Y \}
\]
for some $\Sa N$-definable $Y \subseteq N^{mn}$.
In this case we say that $\Sa N$ trace defines $\Sa M$ via $\uptau$.
\item There are functions $\uptau_1, \ldots, \uptau_n \colon M \to N$ such that every $\Sa M$-definable subset of every $M^m$ is of the form 
\[
\{ (a_1, \ldots, a_m) \in M^m : (\uptau_{i_1}(a_{j_1}), \ldots, \uptau_{i_d}(a_{j_d}) ) \in Y\}
\]
for some $i_1, \ldots, i_d \in \{1, \ldots, n\}$, $j_1, \ldots, j_d \in \{1, \ldots, m\}$, and $\Sa N$-definable $Y \subseteq N^d$.
\item If $L$ is a relational language then any $\Sa M$-definable $L$-structure embeds into an $\Sa N$-definable $L$-structure.
\end{enumerate}  
\end{definition}
To see that (1) implies (2) let $\uptau$ be the inclusion $M \hookrightarrow N^n$.
For the converse implication push $\Sa M$ forward via $\uptau$.
To see that (2) implies (3) let $\uptau_1, \ldots, \uptau_n$ be the components of $\uptau$.
For the converse let $\uptau = (\uptau_1, \ldots, \uptau_n)$.
We leave (4) to the reader as we do not need it here.

\begin{definition}\label{def:loc trace}
We say that $\Sa M$ is {\bf locally trace definable} in $\Sa N$ if one of the following equivalent conditions holds.
\begin{enumerate}[leftmargin=*]
\item For any $\Sa M$-definable sets $X_1\subseteq M^{m_1},\ldots,X_k\subseteq M^{m_k}$ there is an injection $\uptau\colon M \hookrightarrow N^n$ for some $n$ and $\Sa N$-definable sets $Y_1\subseteq N^{nm_1},\ldots,Y_k\subseteq N^{nm_k}$ such that we have
\[
X_i = \{ (a_1, \ldots, a_{m_i}) \in M^{m_i} : (\uptau(a_1), \ldots, \uptau(a_{m_i})) \in Y_i \} \quad \text{for $i = 1, \ldots, k$.}
\]
\item For any $\Sa M$-definable set $X \subseteq M^m$ there is an injection $\uptau \colon M \hookrightarrow N^n$ for some $n$ and $\Sa N$-definable $Y \subseteq N^{nm}$ such that we have
\[
X = \{ (a_1, \ldots, a_m) \in M^m :  (\uptau(a_1), \ldots, \uptau(a_{m})) \in Y \}.
\]
\item There is a collection $\Cal E$ of functions $M \to N$ such that every $\Sa M$-definable  subset of every $M^m$  is of the form 
\[
\{ (a_1, \ldots, a_m) \in M^m : (\uptau_{1}(a_{j_1}), \ldots, \uptau_{d}(a_{j_d}) ) \in Y\}
\]
for some $\uptau_1, \ldots, \uptau_d \in \Cal E$, $j_1, \ldots, j_d \in \{1, \ldots, m\}$, and $\Sa N$-definable $Y \subseteq N^d$.
In this case we say that $\Cal E$ witnesses\footnote{Think of the members of $\Cal E$ as $N$-valued invariants of elements of $M$ and note that every $\Sa M$-definable relation reduces to an $\Sa N$-definable relation between the invariants.} local trace definability of $\Sa M$ in $\Sa N$.
\item If $L$ is a finite relational language then any $\Sa M$-definable $L$-structure embeds into an $\Sa N$-definable $L$-structure.
\item If $L$ is a language containing a single relation then any $\Sa M$-definable $L$-structure embeds into an $\Sa N$-definable $L$-structure.
\end{enumerate}
\end{definition}
The equivalence is proven in \cite[Prop.~2.4]{trace2}.
We sketch the proof.
It is clear that (1) implies (2).
To see that (2) implies (3) fix $\uptau_X = (\uptau^1_X, \ldots, \uptau^{n_X}_X)$ as in (2) for each $\Sa M$-definable $X$ and let $\Cal E$ be the collection of all $\uptau^i_X$.
To show that (3) implies (1) we suppose that $\Cal E$ is as in (3), let $X_1, \ldots, X_k$ be $\Sa M$-definable sets, let $\Cal E^*$ be a finite subset of $\Cal E$ that handles each $X_i$, let $\uptau_1, \ldots, \uptau_n$ be an enumeration of $\Cal E^*$, and take $\uptau = (\uptau_1, \ldots, \uptau_n)$.
To see that (4) implies (1) we fix $X_1, \ldots, X_k$ as in (1) and apply (4) to the structure with domain $M$ and an $m_i$-ary relation defining each $X_i$.
The converse implication is similar, and essentially the same argument shows that (2) and (5) are equivalent.

\medskip
If $T, T^*$ are theories then $T^*$ is (locally) trace definable in $T$ if every (equivalently: some) model of $T^*$ is (locally) trace definable in a model of $T$.
Two theories are {\bf (locally) trace equivalent} if each (locally) trace defines the other.
Two structures are (locally) trace equivalent when their theories are.
Equivalently, two structures are (locally) trace equivalent if each is (locally) trace definable in an elementary extension of the other.
It is easy to see that (local) trace definability is a transitive relation and hence (local) trace equivalence is an equivalence relation.
A theory is {\bf trace maximal} if it trace defines every structure and a structure is trace maximal when its theory is.
Equivalently, a structure is trace maximal if it is trace equivalent to $(\Z; +, \times)$.
We showed in \cite[Lemma~5.2]{trace1} that $T$ is trace maximal if and only if there is $\Sa M \models T$ and infinite $A \subseteq M^m$ for some $m$ such that every subset of every $A^n$ is of the form $Y \cap A^n$ for $\Sa M$-definable $Y \subseteq M^{nm}$.
We will not need this characterization, but the reader may find it useful to think of trace maximality as the strongest possible form of the independence property.

\begin{fact}\label{fact:interpret}
If $\Sa M$ is interpretable in $\Sa N$ then $\Sa M$ is trace definable in $\Sa N$.
\end{fact}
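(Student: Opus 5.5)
The plan is to reduce to the case of a definable set and then deal with the quotient using an injective choice of representatives. If $\Sa M$ is interpretable in $\Sa N$, then up to isomorphism $M = X/E$. Here $X \subseteq N^n$ is $\Sa N$-definable, $E$ is an $\Sa N$-definable equivalence relation on $X$, and for every $\Sa M$-definable $Z \subseteq M^m$ the preimage $\widetilde{Z} \subseteq X^m$ of $Z$ under the quotient map $X^m \to (X/E)^m$ is $\Sa N$-definable. This is the standard reformulation of interpretability: every $\Sa M$-definable set is first-order expressible in terms of the interpreted basic relations. The only proviso is that $\Sa M$-definable sets use parameters from $M$, and these correspond to $E$-classes of tuples from $N$, so parameters are absorbed as well.

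To define $\uptau$, use the axiom of choice to pick a function $s \colon X/E \to X$ selecting one representative from each class, and set $\uptau = s \colon M \hookrightarrow N^n$. This map is injective because distinct classes are disjoint. It need not be definable, but Definition~\ref{def:trace}(2) does not require it to be.

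Next, verify the condition in Definition~\ref{def:trace}(2). Given $\Sa M$-definable $Z \subseteq M^m$, take $Y = \widetilde{Z} \subseteq N^{nm}$, which is $\Sa N$-definable. For $(a_1,\ldots,a_m) \in M^m$, the tuple $(s(a_1),\ldots,s(a_m))$ lies in $X^m$ and its image under the quotient map is $(a_1,\ldots,a_m)$. Hence $(s(a_1),\ldots,s(a_m)) \in Y$ if and only if $(a_1,\ldots,a_m) \in Z$, so $Z$ is the pullback of $Y$ via $\uptau$, as required.

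The only step needing care is the first: confirming that every $\Sa M$-definable set, and not merely the basic relations, lifts to an $\Sa N$-definable subset of $X^m$. I would argue this by induction on formulas. Atomic formulas, including equality, lift to $E$ or to the given definable preimages. Boolean combinations commute with taking preimages. Existential quantification over $M$ lifts to existential quantification over $X$, which works precisely because the lifted sets are $E$-invariant. Once this is established, the argument is immediate.
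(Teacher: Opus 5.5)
Your proposal is correct and is essentially the paper's proof: represent $\Sa M$ as $X/E$ for $\Sa N$-definable $X$ and $E$, and take $\uptau$ to be a (not necessarily definable) section of the quotient map $X \to X/E$. Your induction on formulas, showing that every $\Sa M$-definable set, parameters included, lifts to an $\Sa N$-definable subset of $X^m$, fills in the routine detail that the paper leaves implicit.
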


Fact~\ref{fact:interpret} is \cite[Prop.~1.4]{trace1}.
To prove it we suppose that $\Sa N$ interprets $\Sa M$, let $X$ be an $\Sa N$-definable set and $E$ be an $\Sa N$-definable equivalence relation on $X$ such that $\Sa M$ ``lives on" $X/E$, and take $\uptau$ to be a section of the quotient map $X \to X/E$.

\begin{fact}\label{fact:trace embedd}
Suppose that $\Sa M$ and $\Sa N$ are structures in the same language, $\Sa M$ admits quantifier elimination, and $\uptau$ is an embedding $\Sa M \hookrightarrow \Sa N$.
Then  $\Sa N$ trace defines $\Sa M$ via $\uptau$.
\end{fact}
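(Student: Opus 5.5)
We verify condition (2) of Definition~\ref{def:trace} for the map $\uptau \colon M \hookrightarrow N$; here $n = 1$. The idea is to write each $\Sa M$-definable set as a Boolean combination of basic sets and transfer each basic set along the embedding.

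First we handle parameters. Fix an $\Sa M$-definable $X \subseteq M^m$, say $X = \{ a \in M^m : \Sa M \models \varphi(a, b)\}$ with $\varphi(x,y)$ an $L$-formula and $b \in M^k$. The expansion of $\Sa M$ by constants for $b$ still has quantifier elimination. Indeed, every $L(b)$-formula has the form $\psi(x,b)$ for an $L$-formula $\psi(x,y)$, and we may apply quantifier elimination to $\psi(x,y)$ in $\Sa M$ and then substitute $b$. So there is a quantifier-free $L$-formula $\theta(x,y)$ with
\[
\Sa M \models \forall x\, \bigl(\varphi(x,b) \leftrightarrow \theta(x,b)\bigr).
\]

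Next let
\[
Y = \{ c \in N^m : \Sa N \models \theta(c, \uptau(b)) \}.
\]
This set is $\Sa N$-definable with parameters $\uptau(b)$. Since $\uptau$ is an embedding of $L$-structures, it preserves and reflects quantifier-free formulas. Hence for every $a \in M^m$,
\[
\Sa M \models \theta(a,b) \iff \Sa N \models \theta(\uptau(a), \uptau(b)).
\]
Therefore $X = \{ a \in M^m : (\uptau(a_1), \ldots, \uptau(a_m)) \in Y \}$, which is exactly condition (2).

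None of these steps is a real obstacle. The only point needing care is the parameter issue: $Y$ must use the parameters $\uptau(b)$, and this is legitimate because $\uptau$ is defined on all of $M$.
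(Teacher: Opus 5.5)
Your proof is correct and follows essentially the same route as the paper. The paper also uses quantifier elimination to reduce to quantifier-free formulas, and then notes that an embedding pulls back quantifier-free definable subsets of $N^m$, with the parameters transported by $\uptau$, to the corresponding subsets of $M^m$.
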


Fact~\ref{fact:trace embedd} is \cite[Prop.~1.2]{trace1}.
It follows from the elementary fact that if $\uptau$ is an arbitrary embedding $\Sa M \hookrightarrow \Sa N$ between structures in the same language then any quantifier-free definable subset of $M^m$ is the pullback via $\uptau$ of a quantifier-free definable subset of $N^m$.

\medskip
Lemma~\ref{lem:ka} allows us to keep track of the number of functions needed to locally trace define a structure.
We say that $\Sa N$ locally trace defines $\Sa M$ via $\kappa$ functions if local trace definability of $\Sa M$ in $\Sa N$ can be witnessed by a collection of $\le \kappa$ functions.

\begin{lemma}\label{lem:ka}
\hspace{.000000000000000001cm}
\begin{enumerate}[leftmargin=*]
\item Let $\Sa N$ and $\Sa M \models T$ be structures and suppose that $\Sa N$ locally trace defines $\Sa M$.
Then this can be witnessed by $|T|$ functions.
\item Suppose that some model of $T$ is locally trace definable in a model of $T^*$ via $\kappa$ functions.
Then every model of $T$ is locally trace definable in a model of $T^*$ via $\kappa$ functions.
\item If $\Sa M_1$ is locally trace definable in $\Sa M_2$ via $\kappa$ functions and $\Sa M_2$ is locally trace definable in $\Sa M_3$ via $\eta$ functions, then $\Sa M_1$ is locally trace definable in $\Sa M_3$ via $\kappa\eta$ functions.
\end{enumerate}
\end{lemma}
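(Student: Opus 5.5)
Throughout, I use Definition~\ref{def:loc trace}(3): a collection $\Cal E$ of functions $M \to N$ witnesses local trace definability when every $\Sa M$-definable set reduces, through finitely many members of $\Cal E$, to an $\Sa N$-definable set. The three parts are handled separately.

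For (1), fix a witnessing collection $\Cal E$. Let $L$ be the language of $T$.
\begin{itemize}
\item Every $\Sa M$-definable set has the form $\varphi(M,b)$ for an $L$-formula $\varphi(x,y)$ and a parameter $b$.
\item For each $L$-formula $\varphi(x_1,\ldots,x_m,y)$ with $|y| = l$, consider the parameter-free definable set $\varphi(M) \subseteq M^{m+l}$.
\item Choose finitely many $\uptau_1,\ldots,\uptau_d \in \Cal E$ and an $\Sa N$-definable $Y_\varphi$ with
\[
\varphi(M) = \{ (a,b) : (\uptau_{1}(c_{j_1}),\ldots,\uptau_d(c_{j_d})) \in Y_\varphi \},
\]
where $c = (a,b)$ and each $j_k \in \{1,\ldots,m+l\}$.
\item Fix $b$. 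Coordinates $c_{j_k}$ coming from $b$ contribute constants $\uptau_k(b_{j_k - m}) \in N$. Substituting these constants into $Y_\varphi$ gives an $\Sa N$-definable set with parameters that witnesses $\varphi(M,b)$, using only $\uptau_k$ applied to the $a$-coordinates.
\end{itemize}
So the finite sets chosen for each of the $|L| + \aleph_0 = |T|$ formulas together witness everything, and their union has size at most $|T|$.

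For (2), I would use compactness via an auxiliary structure. Suppose $\Sa M \models T$ is locally trace definable in $\Sa N \models T^*$ via $\Cal E = \{\uptau_\alpha : \alpha < \kappa\}$. By (1)'s argument, I may take the witnessing data for each $L$-formula $\varphi(x,y)$ to be: a finite tuple of indices, positions $j_k$, and an $L^*$-formula $\psi_\varphi(z,w)$ with a parameter tuple $e_\varphi$ from $N$.
\begin{itemize}
\item Form the two-sorted structure $(\Sa M, \Sa N, (\uptau_\alpha)_{\alpha<\kappa})$ with constants naming each $e_\varphi$.
\item For each $\varphi$, the fact that $\psi_\varphi(\cdot,e_\varphi)$ witnesses $\varphi$ is a single first-order sentence in this expanded language.
\item Any other $\Sa M' \models T$ is elementarily equivalent to $\Sa M$. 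Take a sufficiently saturated elementary extension of the expanded structure; its $\Sa M$-sort contains an elementary extension of $\Sa M$ into which $\Sa M'$ embeds elementarily, by saturation (or by using a common elementary extension).
\item The sentences persist, so the extended $\uptau_\alpha$, restricted to $\Sa M'$, witness local trace definability of $\Sa M'$ in a model of $T^*$ via $\kappa$ functions.
\item Restriction to an elementary substructure is harmless because definable sets of $\Sa M'$ are traces of definable sets of the extension.
\end{itemize}
This step needs the most care. It requires the witnessing to be uniform in parameters, which is exactly what the formula-by-formula normalization in (1) provides. I would handle that carefully.

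For (3), let $\Cal E_1$ (with $|\Cal E_1| \le \kappa$) witness local trace definability of $\Sa M_1$ in $\Sa M_2$, and $\Cal E_2$ (with $|\Cal E_2| \le \eta$) witness that of $\Sa M_2$ in $\Sa M_3$. Take the compositions $\{\sigma \circ \uptau : \uptau \in \Cal E_1, \sigma \in \Cal E_2\}$, a collection of size at most $\kappa\eta$. Given an $\Sa M_1$-definable $X$:
\begin{itemize}
\item $X$ is the pullback of an $\Sa M_2$-definable $Y \subseteq M_2^d$ through $\uptau_1,\ldots,\uptau_d$.
\item $Y$ is the pullback of an $\Sa M_3$-definable $Z$ through finitely many $\sigma$'s.
\item Composing, $X$ is the pullback of $Z$ through finitely many maps $\sigma \circ \uptau_i$, applied to the appropriate coordinates.
\end{itemize}
Hence the compositions witness local trace definability of $\Sa M_1$ in $\Sa M_3$ via $\kappa\eta$ functions.
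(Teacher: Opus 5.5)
Your proposal is correct and follows essentially the same route as the paper. In (1) you take the union of finite witnessing subsets over the parameter-free definable sets; in (2) you pass to a saturated elementary extension of the two-sorted structure $(\Sa M, \Sa N, \Cal E)$, elementarily embed the given model of $T$ into its first sort, and compose; in (3) you use compositions. Your formula-by-formula uniformity argument in (2) makes explicit the step the paper only asserts, namely that ``$\Cal E^*$ witnesses local trace definability of $\Sa M^*$ in $\Sa N^*$''.
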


Hence we say that $T^*$ locally trace defines $T$ via $\kappa$ functions if some model of $T^*$ trace defines a model of $T$ via $\kappa$ functions.

\begin{proof}
(1): Let $\Cal E$ witness local trace definability of $\Sa M$ in $\Sa N$.
For every subset $X$ which is definable without parameters in $\Sa M$ let $\Cal E_X$ be a finite subset of $\Cal E$ such that $X$ is of the form $$\{ (a_1, \ldots, a_m) \in M^m : (\uptau_{1}(a_{j_1}), \ldots, \uptau_{d}(a_{j_d}) ) \in Y\}$$
for some $\uptau_1, \ldots, \uptau_d \in \Cal E_X$, $j_1, \ldots, j_d \in \{1, \ldots, m\}$, and $\Sa N$-definable $Y \subseteq N^d$.
Let $\Cal E^*$ be the union of the $\Cal E_X$.
Then $|\Cal E^*| \le |T|$ and it is easy to see that $\Cal E^*$ witnesses local trace definability of $\Sa M$ in $\Sa N$.

\medskip
(2): Suppose that $\Cal E$ is a collection of at most $\kappa$ functions witnessing local trace definability of $\Sa M \models T$ in $\Sa N \models T^*$.
Let $\Sa O$ be an arbitrary model of $T$.
Let $(\Sa M^*, \Sa N^*, \Cal E^*)$ be an $|O|^+$-saturated elementary extension of the two-sorted structure $(\Sa M, \Sa N, \Cal E)$.
Note that $\Cal E^*$ witnesses local trace definability of $\Sa M^* \models T$ in $\Sa N^* \models T^*$.
By saturation there is an elementary embedding $e$ of $\Sa O$ into $\Sa M^*$.
Finally, the collection of functions of the form $f \circ e$ witnesses local trace definability of $\Sa O$ in $\Sa N^*$.

\medskip
(3): Suppose that $\fik$ witnesses local trace definability of $\Sa M_1$ in $\Sa M_2$ and that $(g_i)_{i < \eta}$ witnesses local trace definability of $\Sa M_2$ in $\Sa M_3$.
It is easy to see that $(g_i \circ f_j)_{i < \eta, j < \kappa}$ witnesses local trace definability of $\Sa M_1$ in $\Sa M_3$, and this is a collection of $\kappa\eta$ functions.
\end{proof}

We consider structures admitting quantifier elimination in relational languages.
In the introduction we claimed that a theory is unstable if and only if it trace defines $(\Q; <)$.
This follows from (1) below and quantifier elimination for $(\Q; <)$.

\begin{fact}\label{fact:rel qe}
Suppose that $L$ is a relational language, $\Sa M$ is an $L$-structure with quantifier elimination, and $\Sa N$ is an arbitrary structure.
\begin{enumerate}[leftmargin=*]
\item $\Sa N$ trace defines $\Sa M$ if and only if there is an injection $\uptau \colon M \hookrightarrow N^n$ for some $n$ such that for any $k$-ary $R \in L$ there is $\Sa N$-definable $Y \subseteq N^{kn}$ satisfying
\[
\Sa M \models R(a_1, \ldots, a_k) \quad \Longleftrightarrow \quad (\uptau(a_1), \ldots, \uptau(a_k)) \in Y \quad \text{for all $a_1, \ldots, a_k \in M$.}
\]
\item $\Sa N$ trace defines $\Sa M$ if and only if $\Sa M$ embeds into an $\Sa N$-definable $L$-structure.
\item $\Sa N$ locally trace defines $\Sa M$ if and only if  $(M; R)$ embeds into an $\Sa N$-definable structure for any $R \in L$.
\end{enumerate}
\end{fact}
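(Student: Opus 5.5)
Each of the three items reduces to the observation that, because $\Sa M$ has quantifier elimination in a relational language, every $\Sa M$-definable set is a boolean combination of instances of the relations $R \in L$. Such an instance is $R(x_{j_1}, \ldots, x_{j_k})$ with possibly repeated variables, possibly with parameters substituted for some variables, together with equalities $x_i = x_j$. Since the sets of the required form are closed under the relevant operations, it suffices to handle the atomic formulas.

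For (1), the forward direction is immediate from Definition~\ref{def:trace}(2): each $R$ defines a subset of $M^k$, so it is the pullback of some $\Sa N$-definable $Y_R$. For the converse, fix $\uptau$ and the $Y_R$. The collection of subsets of the various $M^m$ that are pullbacks of $\Sa N$-definable subsets of $N^{mn}$ is closed under the following operations:
\begin{enumerate}[leftmargin=*]
\item boolean combinations, since pullback commutes with them;
\item permuting and identifying coordinates, since one does the same to the blocks of $N^{mn}$;
\item adding dummy coordinates;
\item fixing parameters, since if $b \in M$ we substitute $\uptau(b)$ into the $\Sa N$-definable set.
\end{enumerate}
The equality relation pulls back from the diagonal of $N^n \times N^n$, because $\uptau$ is injective. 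Hence every atomic formula with parameters is handled, and quantifier elimination finishes the argument. Note that we work with parametric definable sets, so parameters must be treated explicitly, and this is exactly step (4) above.

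For (2), suppose first that $\uptau$ is as in (1). Let $\Sa O$ be the $L$-structure on $N^n$ interpreting each $R$ as $Y_R$. This is an $\Sa N$-definable $L$-structure, and $\uptau$ is an embedding of $\Sa M$ into $\Sa O$: it is injective and preserves each relation in both directions by the displayed equivalence. Since $L$ is relational, that is all an embedding requires. Conversely, suppose $\uptau \colon \Sa M \hookrightarrow \Sa O$ is an embedding into an $\Sa N$-definable $L$-structure with domain $O \subseteq N^n$. Then $\uptau$ is an injection $M \hookrightarrow N^n$, and each $R$ is the pullback of $R^{\Sa O}$, which is an $\Sa N$-definable subset of $N^{kn}$. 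So (1) applies. One could alternatively invoke Fact~\ref{fact:trace embedd}, but the direct argument via (1) suffices.

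For (3) we use Definition~\ref{def:loc trace}(5). This gives the forward direction at once, since each $(M; R)$ is an $\Sa M$-definable structure in a language with a single relation. For the converse we verify Definition~\ref{def:loc trace}(2). Take an $\Sa M$-definable $X \subseteq M^m$. By quantifier elimination, $X$ is a boolean combination of instances of finitely many relations $R_1, \ldots, R_l$ together with equality. For each $i$, fix an embedding $\uptau_i \colon (M; R_i) \hookrightarrow \Sa O_i$, where $\Sa O_i$ lives on a subset of $N^{n_i}$. Then $\uptau = (\uptau_1, \ldots, \uptau_l) \colon M \hookrightarrow N^{n_1 + \cdots + n_l}$ is injective, and each $R_i$ is the pullback via $\uptau$ of $R_i^{\Sa O_i}$ pulled back along the projection onto the $i$th block. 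The closure properties from (1) then show that $X$ is a pullback via $\uptau$.

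The only point requiring care is this last step: combining finitely many relations, and handling parameters via substitution, while keeping a single injection. The concatenation of the $\uptau_i$ achieves this.
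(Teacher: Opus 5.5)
Your proof is correct and follows the paper's argument: (1) is the same reduction to atomic formulas via quantifier elimination (you spell out the closure properties the paper calls easy), the $\Sa N$-definable $L$-structure on $N^n$ with each $R$ interpreted as $Y_R$ in (2) is the paper's construction, and your concatenated injection $\uptau = (\uptau_1, \ldots, \uptau_l)$ in (3) is exactly the paper's product structure on $Y_1 \times \cdots \times Y_d$. The differences are cosmetic: for the converse of (2) you go back through (1) rather than citing Facts~\ref{fact:trace embedd} and \ref{fact:interpret} plus transitivity, and in (3) you verify Definition~\ref{def:loc trace}(2) for a single definable set instead of Definition~\ref{def:loc trace}(4) for a finite relational $\Sa M$-definable structure.
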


\begin{proof}
The left to right implication of (1) is immediate from Definition~\ref{def:trace}(2).
The other implication follows easily by quantifier elimination for $\Sa M$.
The right to left implication of (2) follows by applying transitivity of trace definability together with Facts~\ref{fact:trace embedd} and \ref{fact:interpret}.
We prove the other implication.
Suppose that $\uptau$ is as in (1).
For each $k$-ary $R \in L$ fix $\Sa N$-definable $Y_R \subseteq N^{kn}$ as in (1) and let $\Sa P$ be the $L$-structure with domain $N^n$ where each $R \in L$ is interpreted as $Y_R$.
Then $\Sa P$ is definable in $\Sa N$ and $\uptau$ gives an embedding $\Sa M \hookrightarrow \Sa P$.

\medskip
We prove (3).
The left to right implication is immediate from Definition~\ref{def:loc trace}(5).
We prove the other implication by applying Definition~\ref{def:loc trace}(4).
Suppose that $L^*$ is a finite relational language and $\Sa P$ is an $\Sa M$-definable $L^*$-structure.
Then there are distinct $R_1, \ldots, R_d \in L$ such that $\Sa P$ is quantifier-free definable in $(M; R_1, \ldots, R_d)$.
Note that any embedding of $(M; R_1, \ldots, R_d)$ into an $\Sa N$-definable structure induces an embedding of $\Sa P$ into another $\Sa N$-definable structure.
We show that $(M; R_1, \ldots, R_d)$ embeds into an $\Sa N$-definable structure.
By assumption there is an embedding $\uptau_i$ of $(M; R_i)$ into an $\Sa N$-definable structure $(Y_i; S_i)$ for each $i = 1, \ldots, d$.
Let $k_i$ be the arity of $R_i$ and let $\uppi_i$ be the projection $Y_\times := Y_1 \times \cdots \times Y_d \to Y_i$ for each $i$.
Let $\Sa Q$ be the structure with domain $Y_\times$ and a $k_i$-ary relation $S^*_i$ for each $i = 1, \ldots, d$ given by declaring $S^*_i(a_1, \ldots, a_{k_i})$ when $S_i(\uppi_i(a_1), \ldots, \uppi_i(a_{k_i}))$.
Then $\Sa Q$ is definable in $\Sa N$ and $(\uptau_1, \ldots, \uptau_d)$ gives an embedding $\Sa P \hookrightarrow \Sa Q$.
\end{proof}

Given a family $(L_i)_{i \in I}$ of languages let $L_\sqcup$ be the disjoint union of the $L_i$, considered as a $|I|$-sorted language in the natural way.
Given a family $(\Sa M_i)_{i\in I}$ of one-sorted structures we let $\bigsqcup_{i\in I} \Sa M_i$ be the disjoint union of the $\Sa M_i$ considered as an $L_\sqcup$-structure in the natural way.
If $|I| < \aleph_0$ then $\bigsqcup_{i \in I} \Sa M_i$ is finitely-sorted and therefore can be identified with a one-sorted structure in the usual way.
We need to consider infinite disjoint unions in regards to local trace definability at one point below, so we need to consider local trace definability between infinitely-sorted structures for this purpose.
We define local trace definability between infinitely-sorted structures with Definition~\ref{def:loc trace}(4) or by generalizing (3) from the same definition in an obvious way.
See \cite[\S~2.2]{trace2} for details.

\begin{fact}\label{fact:disjoint union}
Let $(\Sa M_i)_{i \in I}$ be a family of structures.
A theory $T$ (locally) trace defines $\bigsqcup_{i \in I} \Sa M_i$ if and only if $T$ (locally) trace defines each $\Sa M_i$.
\end{fact}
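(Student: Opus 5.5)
The left-to-right direction is immediate. Each $\Sa M_i$ is interpretable in $\bigsqcup_{i \in I} \Sa M_i$ as the structure induced on its own sort. So if a model of $T$ (locally) trace defines the disjoint union, then by Fact~\ref{fact:interpret} and transitivity it (locally) trace defines each $\Sa M_i$. For local trace definability I would also note that interpretability gives local trace definability, since trace definability implies it.

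For the converse I would first reduce to a single model of $T$. Suppose each $\Sa M_i$ is (locally) trace definable in some $\Sa N_i \models T$. Take $\Sa N \models T$ to be sufficiently saturated, say $\lambda^+$-saturated where $\lambda$ exceeds $|I|$ and all $|N_i|$. Then each $\Sa N_i$ elementarily embeds into $\Sa N$. Composing with these embeddings, every $\Sa M_i$ is (locally) trace definable in the one structure $\Sa N$: definable sets pull back along elementary embeddings, so the witnessing functions compose correctly.

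Next I would treat the trace-definable case with $I$ finite, where the disjoint union is a one-sorted structure on $M = \bigsqcup_{i} M_i$. Fix injections $\uptau_i \colon M_i \hookrightarrow N^{n_i}$ and let $n = \max n_i + |I|$. Define $\uptau \colon M \hookrightarrow N^n$ by padding $\uptau_i(a)$ and appending a tag coordinate block that encodes $i$ using distinct constants $c_1, \ldots, c_{|I|} \in N$ (parameters are allowed). A definable subset of $M^m$ in the disjoint union is, by the standard quantifier reduction for disjoint unions (Feferman--Vaught style), a finite union over sort-assignments of products of $\Sa M_i$-definable sets. Each such product pulls back from an $\Sa N$-definable set: restrict the tag to $c_i$, then use the definable $Y$ for the $i$-th factor on the relevant coordinates. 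Hence $\Sa N$ trace defines the union.

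For infinite $I$, and for the local version, I would use Definition~\ref{def:loc trace}(3) in its many-sorted form, taking $\Cal E$ to be the union of the witnessing families for the individual sorts. Any definable set in the disjoint union involves only finitely many sorts, and again decomposes into a finite boolean combination of products of sort-wise definable sets, so it is handled by finitely many members of $\Cal E$. In the trace-definable many-sorted case I would use one injection per sort, which the many-sorted definition allows. The main obstacle I expect is rigorously justifying the decomposition of definable sets in the disjoint union into boolean combinations of products of single-sort definable sets. I would prove it by induction on formulas, showing that the collection of such finite unions of products is closed under projection. The key point is that quantifiers over one sort only interact with that sort's factor.
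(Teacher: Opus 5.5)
Your proposal is correct and follows essentially the same route as the paper. In both, the right-to-left direction rests on the fact that every set definable in $\bigsqcup_{i \in I} \Sa M_i$ involves only finitely many sorts and is a boolean combination of products of sets definable in the individual $\Sa M_i$, so the witnesses for the individual sorts combine. Your extra steps (passing to one saturated model of $T$ via elementary embeddings, and tagging sorts with constants in the finite one-sorted encoding) are routine details that the paper's sketch leaves implicit.
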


Fact~\ref{fact:disjoint union} is \cite[Lemma~2.21]{trace2}.
The left to right implication is clear.
The other implication follows from the fact that if $i_1, \ldots, i_k \in I$ are distinct then any subset of $M^{m_1}_{i_1} \times \cdots \times M^{m_k}_{i_k}$ definable in $\bigsqcup_{i \in I} \Sa M_i$ is a boolean combination of sets of the form $X_1 \times \cdots \times X_k$ where each $X_d \subseteq M^{m_d}_{i_d}$ is $\Sa M_d$-definable.

\medskip
We showed that many classification-theoretic properties are preserved under (local) trace definability in \cite{trace1, trace2}.
We only recall the few preservation results that we need at present.

\begin{fact}\label{fact:preserve}
Stability and $k$-$\nip$ are preserved under local trace definability for each $k \ge 1$.
Total transcendence and strong dependence are preserved under trace definability.
\end{fact}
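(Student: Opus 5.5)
The proof consists of four separate ``transfer'' arguments, one per property. Each uses the same idea: a combinatorial configuration witnessing failure of the property in $\Sa M$ is pushed forward along $\uptau$ (applied coordinatewise to tuples) to a configuration of the same kind in $\Sa N$. The properties depend only on the theory. Moreover, if $\Sa M \models T^*$ is (locally) trace definable in some $\Sa N \models T$, then by Lemma~\ref{lem:ka}(2) and the analogous saturation argument for trace definability we may take $\Sa M$ to be any model of $T^*$. Hence it suffices to show: if $\Sa M$ is (locally) trace definable in $\Sa N$ and $\Sa M$ has the bad configuration, then so does $\Sa N$.

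\textbf{Stability and $k$-$\nip$.} Suppose $\Sa M$ is unstable. Then some formula $\varphi(x;y)$ with parameters has the order property, with witnesses $(a_i,b_j)_{i,j<\omega}$. Let $X\subseteq M^{|x|+|y|}$ be the set defined by $\varphi$. By Definition~\ref{def:loc trace}(2) there are an injection $\uptau\colon M\hookrightarrow N^n$ and an $\Sa N$-definable $Y$ with $X=\uptau^{-1}(Y)$, where $\uptau$ is applied coordinatewise. Then $(\uptau(a_i),\uptau(b_j))$ witnesses the order property for the formula defining $Y$, so $\Sa N$ is unstable. The same argument handles $k$-$\nip$: a $(k+1)$-partitioned formula with the $k$-independence property is a single definable set, and pulling back transports the pattern verbatim. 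Only one set is involved in each case, which is why local trace definability suffices here.

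\textbf{Total transcendence and strong dependence.} These failures require infinitely many formulas at once, so we fix a single $\uptau$ as in Definition~\ref{def:trace}(2).

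For total transcendence, suppose there is a binary tree $(X_s)_{s\in 2^{<\omega}}$ of nonempty $\Sa M$-definable sets with $X_{s0},X_{s1}\subseteq X_s$ disjoint. Choose $\Sa N$-definable sets $Y_s$ with $X_s=\uptau^{-1}(Y_s)$. To restore the tree conditions inside $\Sa N$, set
\[
Z_s=\bigcap_{t\subseteq s}Y_t,
\]
and then replace $Z_{s1}$ by $Z_{s1}\setminus Z_{s0}$. This preserves $\uptau(X_s)\subseteq Z_s$, so every node remains nonempty, and now the tree is nested and the siblings are disjoint. Thus $\Sa N$ is not totally transcendental.

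For strong dependence, take an ict-pattern of infinite depth in $\Sa M$. It consists of formulas $\varphi_i(x;y_i)$ with parameter rows $(b_{i,j})_j$ such that each path $\eta$ is realized by some $a_\eta$. The same $\uptau$ turns it into the rows $\psi_i$ defining $Y_i$, with parameters $\uptau(b_{i,j})$ and realizations $\uptau(a_\eta)$. This is an ict-pattern of infinite depth in $\Sa N$.

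The only point needing care, and the main obstacle, is this last pair of cases. There the argument genuinely uses a single injection for all formulas simultaneously, which is why trace definability rather than local trace definability is required. One must also check that the tree conditions (disjointness and nesting), which hold only on the image of $\uptau$, can be enforced on all of $N^n$. The $Z_s$ modification above handles this for the tree. For the ict-pattern no modification is needed, since the definition only requires consistency of finitely many instances along each path, and this consistency is witnessed by the images $\uptau(a_\eta)$.
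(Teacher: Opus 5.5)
Your proof is correct apart from one easily repaired slip in the total transcendence tree. Your arguments for stability, $k$-$\nip$, and strong dependence are what the paper has in mind. The paper remarks that preservation of stability and $k$-$\nip$ follows immediately from the usual definitions (as well as from the $(\Q;<)$ and hypergraph characterizations). It also says strong dependence follows from the ict-pattern definition, which is exactly your transfer with a single $\uptau$.

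For total transcendence you take a different route. The paper indicates a type-counting argument, or alternatively the characterization of non-total-transcendence as trace defining the Cantor set with predicates for the clopen sets. Your binary-tree transfer is more direct. It uses only that a theory is totally transcendental if and only if there is no binary tree of nonempty definable sets, i.e.\ no formula of Morley rank $\infty$. So it works verbatim for languages of any cardinality. A type-counting proof needs some care when $|T|>\aleph_0$, since total transcendence is then not equivalent to $\omega$-stability. Your tree argument is essentially the content of the paper's Cantor-set characterization, used without stating it.

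\textbf{The slip.} As written, you first intersect along branches and only then shrink $Z_{s1}$ to $Z_{s1}\setminus Z_{s0}$. This destroys nesting. For example, $Z_{s10}$ is left unchanged and may meet $Z_{s0}$ at points off the image of $\uptau$, so it need not lie inside the new $Z_{s1}$.

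The fix is to do the two operations in the opposite order. Put $Y'_{s0}=Y_{s0}$ and $Y'_{s1}=Y_{s1}\setminus Y_{s0}$, and then set $Z_s=\bigcap_{t\subseteq s}Y'_t$. Then $Z_{s0},Z_{s1}\subseteq Z_s$ and $Z_{s0}\cap Z_{s1}\subseteq Y'_{s0}\cap Y'_{s1}=\varnothing$. Also $\uptau(X_s)\subseteq Z_s$ still holds, because $\uptau(X_{t1})\cap Y_{t0}=\varnothing$ for every $t$: the pullback of $Y_{t0}$ is $X_{t0}$, which is disjoint from $X_{t1}$.

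With this correction the proof is complete.
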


Preservation of stability and $k$-$\nip$ is immediate from the characterizations of these properties stated in the introduction.
However, it also follows immediately from the usual definitions of instability and $k$-$\ip$.
Preservation of strong dependence also follows easily from the definition in terms of ict-patterns, see \cite[\S~4]{Simon-Book}.
Preservation of total transcendence can be proven using a type-counting argument.
One can also show that a theory is not totally transcendental if and only if it trace defines the structure with domain the Cantor set and unary relations defining all clopen sets.

\medskip
We say that a structure or theory is $\infty$-$\nip$ if it is $k$-$\nip$ for some $k \ge 1$ and is $\infty$-$\ip$ if it is not $\infty$-$\nip$.
Note that $\infty$-$\nip$ is preserved under local trace definability.

\begin{fact}\label{fact:loc max}
A theory is $\infty$-$\ip$ if and only if it locally trace defines every structure.
\end{fact}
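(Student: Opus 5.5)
The plan is to prove the two directions separately. The forward direction will go through an explicit relational structure with quantifier elimination, and the backward direction will use only the fact that local trace definability preserves $\infty$-$\nip$.

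For the backward direction, suppose $T$ locally trace defines every structure. For each $k \ge 1$ fix the generic $(k+1)$-hypergraph; it is not $k$-$\nip$ and it is locally trace definable in a model of $T$. By Fact~\ref{fact:preserve}, $k$-$\nip$ is preserved under local trace definability, so $T$ is not $k$-$\nip$. This holds for every $k$, hence $T$ is $\infty$-$\ip$.

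For the forward direction, suppose $T$ is $\infty$-$\ip$ and let $\Sa O$ be an arbitrary structure. First I Morleyize $\Sa O$, replacing it by its expansion with a relation for every $\emptyset$-definable set. This is interdefinable with $\Sa O$, lies in a relational language, and has quantifier elimination. By Fact~\ref{fact:rel qe}(3) it then suffices to show that for every single $k$-ary relation $R$ the structure $(O;R)$ embeds into a structure definable in a model of $T$. Every $k$-ary relation on a set embeds into the generic $k$-partite $k$-hypergraph on a sufficiently saturated model, or more simply into the random ordered $k$-hypergraph. So it is enough that a model of $T$ defines a structure into which every $k$-ary relational structure of the relevant cardinality embeds, and we are free to pass to elementary extensions of the model of $T$.

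Here I use the hypothesis. Since $T$ is not $(k-1)$-$\nip$, there is a formula $\varphi(x_1,\dots,x_k)$, with $x_k$ perhaps a tuple, exhibiting $(k-1)$-$\ip$. By compactness, in a sufficiently saturated $\Sa N \models T$ there are sets $A_1,\dots,A_{k-1}$ such that for every $S \subseteq A_1\times\cdots\times A_{k-1}$ there is $b_S$ with $\varphi(a_1,\dots,a_{k-1},b_S)$ holding exactly when $(a_1,\dots,a_{k-1})\in S$. This shows that $\Sa N$ trace defines the generic $k$-partite $k$-hypergraph. To handle an arbitrary $k$-ary $R$ on $O$, I encode each $o \in O$ as a tuple $\uptau(o)=(a^1_o,\dots,a^{k-1}_o,b_o)$ with the $a^i_o$ distinct elements of $A_i$. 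The parameter $b_o$ is chosen so that $\varphi(a^1_{o_1},\dots,a^{k-1}_{o_{k-1}},b_{o_k})$ holds if and only if $R(o_1,\dots,o_k)$. Then $R$ is the pullback under $\uptau$ of the definable set $\{(\bar y_1,\dots,\bar y_k): \varphi(y^1_1,\dots,y^{k-1}_{k-1}, y^{k}_k)\}$. Such a choice is possible because the $a^i_o$ are distinct, so the desired pattern is just a subset of $A_1\times\cdots\times A_{k-1}$, and saturation provides the $b_o$. Injectivity of $\uptau$ follows from distinctness of the $a^1_o$. Finally, Lemma~\ref{lem:ka}(2) together with the "every (equivalently some) model" convention transfers this back to arbitrary models, and $\Sa O$ is locally trace definable in $T$.

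The main obstacle is the saturation step. The model must be large enough to realize all $|O|^{k-1}$ patterns simultaneously, which requires $|A_i|\ge|O|$ and $|O|^+$-saturation. This is routine by compactness, since each finite piece of the desired configuration follows from $\varphi$ having $(k-1)$-$\ip$. I also need to check that the paper's convention for $k$-$\ip$ matches the indexing by $\varphi(x_1,\dots,x_k)$ used here, and that the $k=1$ case is trivial.
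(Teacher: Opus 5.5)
Your argument is correct and is essentially the paper's. The backward direction is exactly the preservation of $k$-$\nip$ under local trace definability. The forward direction makes the same reduction to single relations (the paper uses Definition~\ref{def:loc trace}(5), you use Morleyization plus Fact~\ref{fact:rel qe}(3), which amounts to the same thing) and then embeds an arbitrary $k$-ary relation into one defined by a formula with $(k-1)$-$\ip$. Your encoding $o \mapsto (a^1_o,\ldots,a^{k-1}_o,b_o)$, pulled back along the ``diagonal'' set, supplies the step the paper leaves as following easily from the definition of $k$-$\ip$.
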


Fact~\ref{fact:loc max} is \cite[Prop.~2.15]{trace2}.
The right to left direction follows as $\infty$-$\nip$ is preserved under local trace definability.
We sketch the other direction.
By Definition~\ref{def:loc trace}(5) $T$ locally trace defines every structure if and only if any structure in a language containing a single relation embeds into a structure definable in a model of $T$.
Furthermore, it follows easily from the definition of $k$-$\ip$ that if $T$ is $k$-$\ip$ then any $(k + 1)$-ary relation embeds into a relation definable in a model of $T$.

\medskip
We now recall the Shelah completion of a $\nip$ structure.
This is more commonly referred to as the ``Shelah expansion".
Let $\Sa M$ be a $\nip$ structure and $\Sa N$ be an $|M|^+$-saturated elementary extension of $\Sa M$.
The {\bf Shelah completion} $\Sh M$ of $\Sa M$ is the structure induced on $\Sa M$ by $\Sa N$.
It is easy to see that this structure does not depend on $\Sa N$ up to interdefinability.
Note that if $\Sa M$ expands a linear order then any convex subset of $M$ is definable in $\Sh M$.
Shelah showed that every $\Sh M$-definable subset of every $M^m$ is of the form $Y \cap M^m$ for $\Sa N$-definable $Y \subseteq N^m$~\cite{Shelah-external}.
It follows that $\Sh M$ is trace definable in $\Sa N$.
Furthermore, $\Sa M$ is clearly a reduct of $\Sh M$, so we have the following.

\begin{fact}\label{fact:she}
Any $\nip$ structure is trace equivalent to its Shelah completion.
\end{fact}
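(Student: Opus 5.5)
Fact~\ref{fact:she} follows by showing trace definability in both directions and using Shelah's theorem for the nontrivial one. Fix a $\nip$ structure $\Sa M$ and an $|M|^+$-saturated elementary extension $\Sa N \succ \Sa M$. By definition $\Sh M$ has domain $M$, with a relation for $Y \cap M^m$ for each $\Sa N$-definable $Y \subseteq N^m$.

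\emph{$\Sh M$ trace defines $\Sa M$.} Every $\Sa M$-definable $X \subseteq M^m$ has the form $Y \cap M^m$, where $Y$ is defined in $\Sa N$ by the same formula with the same parameters, since $\Sa M \preceq \Sa N$. So $X$ is $\Sh M$-definable and $\Sa M$ is a reduct of $\Sh M$. The identity map $M \to M$ then witnesses trace definability via Definition~\ref{def:trace}(2).

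\emph{$\Sa N$ trace defines $\Sh M$.} Take $\uptau \colon M \hookrightarrow N$ to be the inclusion. By Definition~\ref{def:trace}(1) it suffices that every $\Sh M$-definable subset of every $M^m$ equals $Y \cap M^m$ for some $\Sa N$-definable $Y \subseteq N^m$. The atomic relations of $\Sh M$ have this form by construction. Arbitrary definable sets are obtained from them by boolean combinations and projections, and projections are where the content lies. This is exactly Shelah's theorem, recalled just before the statement: every $\Sh M$-definable set is of the form $Y \cap M^m$ with $Y$ $\Sa N$-definable. I would quote that result rather than reprove it, since reproving it (via honest definitions / $\nip$ uniformity) is the only real obstacle. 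So $\Sa N$ trace defines $\Sh M$, and since $\Sa N \equiv \Sa M$, $\Th(\Sa M)$ trace defines $\Th(\Sh M)$.

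Together the two directions give that $\Th(\Sa M)$ and $\Th(\Sh M)$ trace define each other. Hence $\Sa M$ and $\Sh M$ are trace equivalent.
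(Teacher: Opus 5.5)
Your proposal is correct and follows the paper's argument: $\Sa M$ is a reduct of $\Sh M$, and Shelah's theorem on externally definable sets gives that $\Sh M$ is trace definable in $\Sa N \equiv \Sa M$ via the inclusion. Together these give trace equivalence of the two theories.
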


Finally, we will use two special cases of Fact~\ref{fact:jera}.
See \cite{jera} for a proof.

\begin{fact}\label{fact:jera}
Let $L$ be an arbitrary language.
Then the empty $L$-theory has a model completion which is complete when $L$ does not contain constants.
\end{fact}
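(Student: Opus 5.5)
The plan is to realize the model completion as the theory of existentially closed $L$-structures, via the standard criterion: a universal theory $T_0$ has a model completion if and only if the class of existentially closed models of $T_0$ is elementary and $T_0$ has the amalgamation property. Here $T_0$ is the empty $L$-theory, so its models are all $L$-structures. The argument has three steps: amalgamation, elementarity of the existentially closed structures, and completeness when $L$ has no constants.

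\emph{Amalgamation.} Let $\Sa A \subseteq \Sa B$ and $\Sa A \subseteq \Sa C$ be $L$-structures. Take the universe $D = B \sqcup_A C$, the union of $B$ and $C$ glued along $A$.
\begin{itemize}[leftmargin=*]
\item For a function symbol $f$ and a tuple lying entirely in $B$, interpret $f$ as in $\Sa B$. For a tuple lying entirely in $C$, interpret $f$ as in $\Sa C$. These agree on tuples from $A$ because $\Sa A$ is a substructure of both.
\item On mixed tuples, interpret $f$ arbitrarily with values in $D$.
\item Interpret relations the same way: as in $\Sa B$ on $B$-tuples, as in $\Sa C$ on $C$-tuples, and as empty on mixed tuples.
\end{itemize}
Constants lie in $A$, so there is no conflict. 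Then $\Sa B$ and $\Sa C$ embed into $\Sa D$ over $\Sa A$. The same disjoint-union construction with $A = \emptyset$ gives joint embedding whenever $L$ has no constant symbols.

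\emph{Existentially closed structures form an elementary class.} Let $\varphi(x,y)$ be a finite conjunction of atomic and negated atomic formulas. I want a quantifier-free formula $\theta_\varphi(x)$ such that, for every $L$-structure $\Sa M$ and every $a \in M^{|x|}$, the formula $\varphi(a,y)$ is realized in some extension of $\Sa M$ if and only if $\Sa M \models \theta_\varphi(a)$. Given this, the existentially closed structures are axiomatized by the sentences
\[
\forall x\,\bigl(\theta_\varphi(x) \to \exists y\, \varphi(x,y)\bigr).
\]
To produce $\theta_\varphi$, I run congruence closure on the finite set $S$ of subterms occurring in $\varphi$. A solution in an extension exists if and only if there is a partition of $S$ with the following properties:
\begin{itemize}[leftmargin=*]
\item it is closed under congruence;
\item it contains the positive literals of $\varphi$ and is compatible with the negative ones;
\item it agrees with the diagram of $\Sa M$ on the subterms that involve only $x$, that is, on the terms $t(a)$ evaluated in $\Sa M$.
\end{itemize}
Given such a partition, a solution is built by freely adjoining new elements and freely extending the functions, much as in the amalgamation step. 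So $\theta_\varphi$ is the finite disjunction, over the admissible partitions, of the corresponding equalities, inequalities, and relation literals among the $x$-terms of bounded depth. Each disjunct is quantifier-free.

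This is the step I expect to be the main obstacle. One must check that the free extension really realizes $\varphi$. The delicate point is that function values of new tuples must be chosen so that they do not collapse distinct classes and do not force extra equalities or relation instances. I would handle this by adjoining one new element for each class of the partition that is not already named by an element of $M$, and defining the functions on new tuples exactly as the partition dictates. Any tuple not covered by the partition is sent to a fresh element, and no relation instance is added beyond what the partition requires.

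\emph{Conclusion and completeness.} The class of existentially closed structures is elementary, and the empty theory has amalgamation, so their theory $T^*$ is the model completion of the empty theory. In particular $T^*$ eliminates quantifiers. If $L$ has no constant symbols, there are no quantifier-free sentences apart from $\top$ and $\bot$. Hence every sentence is $T^*$-equivalent to $\top$ or $\bot$, and $T^*$ is complete. This is consistent with the joint embedding property obtained in the amalgamation step.
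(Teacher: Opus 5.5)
The paper gives no proof of this fact; it only cites \cite{jera}, so your proposal has to stand on its own. The architecture is sound. The gluing amalgamation is correct, and a quantifier-free $\theta_\varphi$ would axiomatize the existentially closed structures (and even give quantifier elimination directly). The completeness step also works.

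The gap is in the key step: the solvability criterion you state is false as written. Take a unary $f$ and let $\varphi(x,y)$ be $y = x \wedge f(y) = y$. The one-class partition of $S = \{x, y, f(y)\}$ is congruence-closed and contains the positive literals. It also agrees with the diagram of $\Sa M$ on the only $x$-subterm, namely $x$. So your criterion says $\varphi(a,y)$ is solvable for every $a$, but in fact it is solvable if and only if $f(a) = a$.

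The issue is that $y$-terms can be forced into $M$. Once $[y]$ is identified with $a$, the tuple $(a)$ is an \emph{old} tuple on which $\Sa M$ already defines $f$, so you cannot ``define the functions as the partition dictates'' there. The real delicate point is consistency on old tuples, not collapsing on new ones; fresh elements handle the latter automatically. Your mention of ``$x$-terms of bounded depth'' suggests you sensed this, but neither the criterion nor the construction accounts for it.

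The fix is as follows. For a partition $P$ of $S$ (with relation data), let $F_P$ be the smallest set of classes that contains the classes of the $x_j$ and of constants, and contains $[f(t_1,\ldots,t_k)]$ whenever $f(t_1,\ldots,t_k)\in S$ and all $[t_i]\in F_P$. Unwinding this closure gives each $c\in F_P$ an $x$-term $u_c$ of depth at most $|S|$. The disjunct of $\theta_\varphi$ for $P$ asserts four things:
\begin{itemize}
\item each $x_j$ or constant in $c$ equals $u_c$;
\item $f(u_{[t_1]},\ldots,u_{[t_k]}) = u_{[f(t_1,\ldots,t_k)]}$ whenever all $[t_i]\in F_P$;
\item $u_c\neq u_{c'}$ for distinct $c,c'\in F_P$;
\item the relation instances that $P$ prescribes on $F_P$-tuples hold.
\end{itemize}
Any solution in an extension induces such a $P$ satisfying this at $a$.

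Conversely, realize each class outside $F_P$ by a distinct new element. If $[f(t_1,\ldots,t_k)]\notin F_P$ then some $[t_i]\notin F_P$, so that argument tuple is genuinely new and $f$ may be set there freely. The same holds for terms in $F_P$-classes that have a new argument.

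With this correction the rest of your argument goes through. When $x$ is empty and there are no constants, $F_P=\varnothing$, so $\theta_\varphi$ is $\top$ or $\bot$, which is exactly what your completeness step needs.
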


\section{The theory of infinite-dimensional space over a model of $T$}\label{section:Dkap}
Fix a theory $T$ and an infinite cardinal $\kappa$.
We let $\dkap$ be the theory of two-sorted structures of the form $(M^\kappa, \Sa M, \Pi)$ where $\Sa M \models T$ and $\Pi$ is the collection of all coordinate projections $M^\kappa \to M$.
Note that $\dkap$ is definitionally equivalent to $D^\kappa(T^*)$ when $T$ is definitionally equivalent to $T^*$.
Now $\dkap$ is a two-sorted theory, but we will identify it with a one-sorted theory in the usual way when convenient.

\begin{proposition}\label{prop:pik1}
Let $T$ be an $L$-theory and $\kappa$ be infinite.
\begin{enumerate}[leftmargin=*]
\item $\dkap$ is complete.
\item $\dkap$ is axiomatized by axioms asserting that $(P, \Sa M, \Pi)$ is a model of $\dkap$ if and only if $\Sa M \models T$, $\Pi$ is a collection of $\kappa$ functions $P \to M$, and for any $a_1, \ldots, a_n \in M$, distinct $\uppi_1, \ldots, \uppi_n \in \Pi$, and $m \ge 1$, there are $m$ distinct elements $p$ of $P$ such that $\uppi_{i}(p) = a_i$ for each $i = 1, \ldots, n$.
\item If $\Sa M \models T$, $P$ is a set, and $\fik$ is a collection of functions $P \to M$, then there is a set $Q$ extending $P$ and a collection $\gik$ of functions $Q \to M$ such that each $g_i$ extends $f_i$ and $(Q, \Sa M, \gik) \models \dkap$.
\item Let $x_1, \ldots, x_n$ be variables of the first sort and $y_1, \ldots, y_m$ be variables of the second sort.
Then any formula in the $x_i$ and $y_i$ is equivalent in $\dkap$ to a boolean combination of formulas in the $x_i$ in the language of equality and formulas of the form 
\[
\vartheta(\uppi_1(x_{i_1}), \ldots, \uppi_d(x_{i_d}), y_1, \ldots, y_m)
\]
for some $L$-formula $\vartheta(z_1, \ldots, z_{d + m})$ and $\uppi_1, \ldots, \uppi_d \in \Pi$.
\end{enumerate}
Suppose furthermore that $T$ admits quantifier elimination.
Then
\begin{enumerate}[leftmargin=*]\setcounter{enumi}{4}
\item $\dkap$ admits quantifier elimination.
\item $\dkap$ is the model completion of the theory $\dkap_{\_}$ of two-sorted structures of the form $(P, \Sa M, \fik)$ where $P$ is a set, $\Sa M \models T$, and each $f_i$ is a function $P \to M$.
\end{enumerate}
\end{proposition}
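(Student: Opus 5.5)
The plan is to prove (2) and (4) together with a back-and-forth argument, reducing to the case where $T$ has quantifier elimination, and to read off the remaining items from these. First I would Morleyize: replace $T$ by its Morleyization $T^\mathrm{qe}$ in the language $L^\mathrm{qe}$, which has a relation symbol for every $L$-formula. This is definitionally equivalent to $T$, so by the remark before the proposition $\dkap$ is definitionally equivalent to $D^\kappa(T^\mathrm{qe})$. It therefore suffices to treat the case where $T$ has quantifier elimination, and (4) will follow from (5) after translating back.

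Let $T'$ be the theory described in (2). Models of $\dkap$ satisfy $T'$: in $M^\kappa$, finitely many distinct coordinates can be prescribed while infinitely many other coordinates remain free, so every fiber is infinite. For (3), let $Q = P \sqcup (M^\kappa \times \omega)$ with $g_i$ extending $f_i$ and $g_i(a,j) = a_i$; the fibers are then infinite, so $(Q,\Sa M,\gik) \models T'$. For (5) and (6) with $T$ QE, I would show that $T'$ has quantifier elimination by the standard criterion. Let $\Sa A = (P,\Sa M,\Pi)$ and $\Sa B = (P',\Sa M',\Pi')$ be models of $T'$, with $\Sa B$ being $|A|^+$-saturated. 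Let $\sigma$ be a partial isomorphism between substructures; such a substructure has the form $(P_0, M_0, \Pi)$ with $M_0$ an $L$-substructure containing $\Pi(P_0)$. We extend $\sigma$ by one element.
\begin{enumerate}
\item If the new element is $a \in M$, use QE for $T$ and saturation to realize $\mathrm{qftp}(a/M_0)$ in $M'$.
\item If the new element is $p \in P$, first add all the values $\uppi(p)$, a set of size at most $\kappa$. These are realized one at a time by case (1), or by saturation applied to the whole type. Then we need $p' \in P'$ with prescribed values $\uppi(p')$ for all $\uppi \in \Pi$ that also avoids $\sigma(P_0)$.
\end{enumerate}

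The main obstacle is this last step. Axiom (2) only controls finitely many coordinates at a time, so the type of $p'$ is the partial type
\[
\{\uppi(x) = b_\uppi : \uppi \in \Pi\} \cup \{x \ne q : q \in \sigma(P_0)\}.
\]
It is finitely satisfiable because every finite fiber is infinite, and it has at most $\kappa + |A|$ parameters. This forces the saturation hypothesis $|A|^+ \ge \kappa^+$, which is harmless since we may choose $\Sa B$ as saturated as we like.

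QE for $T'$ gives the remaining items. Completeness of $T'$ follows because any two models share the substructure generated by the prime structure: no constants appear on the $P$-sort, and the $M$-sort is governed by the complete theory $T$. Hence $T' = \dkap$, which gives (1), (2) and (5). Then (3) shows that every model of $\dkap_{\_}$ embeds into a model of $\dkap$, since first embedding $\Sa M$ into an elementary extension is unnecessary here. Together with QE, this yields (6), that $\dkap$ is the model completion of $\dkap_{\_}$. Finally, (4) is the translation of quantifier-free formulas in the Morleyized language: atomic formulas are equalities between $P$-variables or $L^\mathrm{qe}$-atomic formulas applied to terms $\uppi(x_i)$ and $y_j$, which are exactly the forms listed in (4).
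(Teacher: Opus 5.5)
Your proof is correct, but it takes a different route from the paper's. The paper never checks the substructure-extension test for quantifier elimination directly. Instead, it first shows that the axiom system of (2) is the model companion of $\dkmin$ by proving that every model is existentially closed. To do this, it embeds an arbitrary extension $(P^*,\Sa M^*,\fik)$ over $P\cup M$ into a $\max(\kappa,|P^*|,|M^*|)^+$-saturated elementary extension; quantifier elimination for $T$ is used there only to see that $\Sa M\preceq\Sa M^*$. The paper then gets quantifier elimination from model completeness together with amalgamation for the universal part, and completeness from joint embedding.

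You instead verify the one-point extension criterion for embeddings of arbitrary substructures into a $(|A|+\kappa)^+$-saturated model. You then read off completeness and the model-completion property from quantifier elimination: completeness via the common constant-generated substructure, and (6) via (3) plus the fact that quantifier elimination makes $D^\kappa(T)\cup\mathrm{Diag}(\Sa A)$ complete. The combinatorial core is the same in both arguments. First realize all coordinate values on the $M$-sort; then use compactness and saturation to find a new $P$-point with exactly those coordinates that avoids a prescribed small set.

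Your route gets quantifier elimination in one stroke and replaces the amalgamation and joint-embedding arguments that the paper leaves to the reader. The cost is that you must work over arbitrary substructures. So you need quantifier elimination for $T$ to make $\sigma$ restricted to $M_0$ partial elementary. Your completeness step also has to be read with the usual convention about the possibly empty substructure generated by no constants; one way is to start the back-and-forth from the empty map and invoke model completeness. The paper's existential-closedness formulation, for its part, fits the relative model companion framework used later in Section~3.
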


\begin{proof}
The case of (4) when $T$ admits quantifier elimination follows directly from (5).
Hence the general case of (4) follows from (5) by Morleyization.
Also using Morleyiziation, it is sufficient to prove (1) and (2) under the assumption that $T$ admits quantifier elimination.
Furthermore, it is clear that $\dkap$ satisfies the axioms given in (2).
We therefore suppose that $T$ admits quantifier elimination, let $T^*$ be the theory given by the axioms in (2), and show that $T^*$ is complete, admits quantifier elimination, and is the model completion of $\dkap_{\_}$.
We also show that (3) holds with $\dkap$ replaced by $T^*$.
Completeness implies $T^* = \dkap$, so the proposition follows.

\medskip
First let $\Sa M$, $P$, and $\fik$ be as in (3).
Let $Q$ be the disjoint union of $P$ with $M^\kappa$.
For each $i < \kappa$ let $g_i$ be the function $Q \to M$ which agrees with $f_i$ on $P$ and agrees with the $i$th coordinate projection $M^\kappa \to M$ on $M^\kappa$.
Note that $(Q, \Sa M, \gik)$ satisfies $T^*$.

\medskip
We now suppose that $T$ admits quantifier elimination and show that $T^*$ is the model companion of $\dkap_{\_}$.
By the previous paragraph any model of $\dkap_{\_}$ embeds into a model of $T^*$.
We fix a model $(P, \Sa M, \fik)$ of $T^*$ and show that $(P, \Sa M, \fik)$ is existentially closed in the class of models of $\dkap_{\_}$.
Let $(P^*,  \Sa M^*, \fik)$ be a model of $\dkmin$ extending $(P, \Sa M, \fik)$.
Let $(Q, \Sa N, \gik)$ be a $\max(\kappa, |P^*|, |M^*|)^+$-saturated elementary extension of $(P, \Sa M, \fik)$.
It suffices to show that there is an embedding of $(P^*,  \Sa M^*, \fik)$ into $(Q, \Sa N, \gik)$ which fixes every element of both $P$ and $M$.
Our assumption of quantifier elimination for $T$ ensures that $\Sa M^*$ is an elementary extension of $\Sa M$.
Hence by saturation we may suppose that $\Sa M^*$ is an elementary substructure of $\Sa N$.
Applying saturation, we see that for every $p \in P^* \setminus P$ there are at least $|P^*|^+$ elements $q$ of $Q$ such that $g_i(q) = f_i(p)$ for every $i < \kappa$.
Hence the inclusion $P \to Q$ extends to an injection $\eta \colon P^* \to Q$ such that $g_i(\eta(p)) = f_i(p)$ for all $p \in P^*$ and $i < \kappa$.
Now $\eta$ and the inclusion $\Sa M^* \to \Sa N$ together give the required embedding of $(P^*, \Sa M^*, \fik)$ into $(Q, \Sa N, \gik)$.

\medskip
A model of $T^*_\forall$ is a structure $(P, \Sa M, \fik)$ where $\Sa M \models T_\forall$ and each $f_i$ is a function $P \to M$.
As $T$ admits quantifier elimination $T_\forall$ has the amalgamation property.
It easily follows that $T^*_\forall$ has the amalgamation property.
As $T^*$ is model complete it follows that $T^*$ has quantifier elimination~\cite[Thm.~8.4.1]{Hodges}.
It remains to show that $T^*$ is complete.
By model completeness of $T^*$ it suffices to show that any two models of $T^*$ jointly embed into a third.
This follows from another obvious amalgamation argument which is again left to the reader.
\end{proof}



\begin{lemma}\label{lem:c}
Let $T$ be a theory and $(P, \Sa M, \fik)$ be a model of $\dkap$ such that $|P| \le |M|$.
Let $\iota$ be an arbitrary injection $P \hookrightarrow M$.
Then $\fik$, $\iota$, and the identity $M \to M$ together witness local trace definability of $(P, \Sa M, \fik)$ in $\Sa M$.
It follows that $\dkap$ is locally trace equivalent to $T$.
\end{lemma}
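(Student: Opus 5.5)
We treat $(P, \Sa M, \fik)$ as a one-sorted structure on the disjoint union $P \sqcup M$. The maps $\fik$, $\iota$ and $\mathrm{id}_M$ are then turned into functions on the whole domain by extending each one arbitrarily on the other sort; for instance $f_i$ is extended to $M$ by the identity and $\iota$ is extended to $M$ by the identity. We add one further harmless witness, the function $P \sqcup M \to M$ sending $P$ to a fixed $b_0$ and $M$ to a different fixed $b_1$, so that the sort of an element can be read off. Such constant-on-sorts functions may be used with parameters, because an $\Sa N$-definable set may use parameters. We then verify Definition~\ref{def:loc trace}(3). Definable sets of the one-sorted version are finite unions of pieces lying in products of sorts. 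Intersecting with a sort-product is captured by the sort-recording function, so it suffices to handle a definable subset $X \subseteq P^n \times M^m$.

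By Proposition~\ref{prop:pik1}(4), with parameters absorbed in the usual way, $X$ is a boolean combination of two kinds of sets. The first kind consists of sets defined by equality formulas in the $P$-variables $x_1,\dots,x_n$. Since $\iota$ is injective, $x_j = x_k$ holds if and only if $\iota(x_j) = \iota(x_k)$, which is the pullback of the diagonal in $M^2$. The second kind consists of sets of the form $\vartheta(\uppi_1(x_{i_1}),\dots,\uppi_d(x_{i_d}), y_1,\dots,y_m)$ with $\vartheta$ an $L(M)$-formula and $\uppi_l$ among the $f_i$. Such a set is the pullback under the functions $f_{i_1},\dots,f_{i_d}$ (applied to the $x$'s) and $\mathrm{id}_M$ (applied to the $y$'s) of the $\Sa M$-definable set defined by $\vartheta$. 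A boolean combination of sets of this form is again of this form, since we may take the product of the relevant finitely many witnesses and combine the target sets in $\Sa M$. This gives the first claim.

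For the consequence, the first claim shows that $\Sa M$ locally trace defines $(P, \Sa M, \fik)$, provided we can find a model of $\dkap$ with $|P| \le |M|$. This needs care, since $P = M^\kappa$ is bigger than $M$. Our plan is to use downward L\"owenheim--Skolem on the two-sorted structure to get a model with $|P| = |M|$: any model of size $\lambda \ge \kappa$ has both sorts of size at most $\lambda$. If the $M$-sort came out smaller, we would first pass to an elementary extension in which $M$ is large and then take an elementary substructure containing $\lambda$ elements of $M$. Since local trace definability is a property of theories, this yields $T \ge_{\mathrm{loc}} \dkap$. Conversely, $\Sa M$ is a definable sort of $(P, \Sa M, \fik)$, so Fact~\ref{fact:interpret} shows that $\dkap$ trace defines, and hence locally trace defines, $T$.

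The only delicate point is bookkeeping rather than mathematics. We must make sure that the witnesses are genuinely functions on the entire one-sorted domain, and that intersections with sorts are accounted for. Both are handled by the extension and sort-recording trick above, so we do not expect a real obstacle.
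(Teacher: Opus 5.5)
Your proposal is correct and follows the paper's own argument. The paper calls the first claim immediate from Proposition~\ref{prop:pik1}(4): equalities among $P$-variables are handled by the injectivity of $\iota$, and $L(M)$-formulas in the $f_i$-images and the $M$-variables are handled by the $f_i$ and the identity. Your sort-recording function is harmless, though unnecessary if you use the multi-sorted version of Definition~\ref{def:loc trace}(3).

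For the consequence, your L\"owenheim--Skolem construction of a model with $|P| = |M|$ works, provided you take $\lambda \ge |T| + \kappa$ rather than just $\lambda \ge \kappa$. The paper instead implicitly uses a standard model $(M^\kappa, \Sa M, \pik)$ with $|M|^\kappa = |M|$, as in Proposition~\ref{prop:prekey}(2).
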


Lemma~\ref{lem:c} is immediate from Proposition~\ref{prop:pik1}(4).
(Note that $\iota$ handles formulas in the language of equality with variables ranging over $P$.)
Suppose $(P, \Sa M, \fik) \models \dkap$.
Let $\Sa P$ be the structure induced on $P$ by $(P, \Sa M, \fik)$.
Then $\Sa P$ interprets $(P, \Sa M, \fik)$ as every $f_i$ is a surjection $P \to M$.
Hence an arbitrary structure (locally) trace defines $(P, \Sa M, \fik)$ if and only if it (locally) trace defines $\Sa P$.
We use this without mention below.

\begin{proposition}\label{prop:prekey}
Suppose that $\Sa O$ is a structure in a language of cardinality at most $\kappa$ and $\Sa M$ is an arbitrary structure.
\begin{enumerate}[leftmargin=*]
\item If $\Sa O$ is locally trace definable in $\Sa M$ then $\Sa O$ is trace definable in $(M^\kappa, \Sa M, \pik)$.
\item Suppose $|M|^\kappa = |M|$.
Then $\Sa O$ is locally trace definable in $\Sa M$ when $\Sa O$ is trace definable in $(M^\kappa, \Sa M, \pik)$.
\end{enumerate}
\end{proposition}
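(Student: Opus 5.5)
For (1) I will glue the local witnesses into a single injection into the first sort. By Lemma~\ref{lem:ka}(1) and Definition~\ref{def:loc trace}(3), there is a family $\Cal E = (\uptau_i)_{i < \kappa}$ of functions $O \to M$ witnessing local trace definability of $\Sa O$ in $\Sa M$; here we use $|T_{\Sa O}| \le \kappa$, and we repeat functions if there are fewer than $\kappa$ of them. We may assume $|O| \le |M|^\kappa$, for instance by passing to an elementary extension of $\Sa M$, which is harmless since trace definability is a property of theories. Fix an injection $\iota\colon O \hookrightarrow M^\kappa$ whose first coordinate encodes injectivity. Concretely, reindex $\kappa$ as $\kappa \sqcup \kappa$, use the first copy of coordinates for the $\uptau_i$, and use the second copy to make the map injective; the second copy can always do this when $|O|\le|M|^\kappa$. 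So set $\uptau(a) = ((\uptau_i(a))_{i<\kappa}, \iota(a))$, which is an injection $O \hookrightarrow M^\kappa$.

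Now take an $\Sa O$-definable $X \subseteq O^m$. By (3) of Definition~\ref{def:loc trace} it has the form $\{(a_1,\dots,a_m) : (\uptau_{i_1}(a_{j_1}),\dots,\uptau_{i_d}(a_{j_d})) \in Y\}$ with $Y$ $\Sa M$-definable. In the structure $(M^\kappa,\Sa M,\pik)$ this is the pullback under $\uptau$ of $\{(p_1,\dots,p_m) : (\uppi_{i_1}(p_{j_1}),\dots,\uppi_{i_d}(p_{j_d}))\in Y\}$, which is definable there. So $\uptau$ witnesses trace definability via Definition~\ref{def:trace}(2).

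For (2), suppose $(M^\kappa,\Sa M,\pik)$ trace defines $\Sa O$. By Lemma~\ref{lem:c} the structure $(M^\kappa,\Sa M,\pik)$ is locally trace definable in $\Sa M$. The hypothesis $|M|^\kappa=|M|$ gives $|P|\le|M|$ for $P=M^\kappa$, which is exactly what Lemma~\ref{lem:c} needs to supply the injection $\iota$. Since trace definability implies local trace definability, transitivity of local trace definability gives that $\Sa O$ is locally trace definable in $\Sa M$.

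I have to be careful that the witness comes from a \emph{model} of $\dkap$ of the right size. If trace definability is meant at the level of theories, $\Sa O$ is trace definable in some model $(P,\Sa M',\Pi)$ of $\dkap$, possibly larger. The fix is Lemma~\ref{lem:c} applied to a model with $|P|\le|M'|$, obtained by Löwenheim–Skolem-style adjustments, followed by Lemma~\ref{lem:ka}(2) to transfer back to $\Sa M$. This bookkeeping is the main obstacle; the rest is formal.
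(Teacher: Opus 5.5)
Your argument is essentially the paper's. For (1) the paper takes $\kappa$ witnessing functions $(f_i)_{i<\kappa}$ from Lemma~\ref{lem:ka}(1), maps $a \mapsto (f_i(a))_{i<\kappa}$, and checks the same pullback you describe. For (2) it combines Lemma~\ref{lem:c} (using $|M|^\kappa = |M|$) with transitivity exactly as you do.

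Three of your extra steps should be dropped.

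\begin{enumerate}
\item The padding by a second block of coordinates is unnecessary. Equality on $O$ is $\Sa O$-definable, so any witnessing family separates points, and $a \mapsto (\uptau_i(a))_{i<\kappa}$ is already injective.
\item The reduction to $|O| \le |M|^\kappa$ ``by passing to an elementary extension of $\Sa M$'' is not legitimate. Statement (1) concerns the particular structure $(M^\kappa, \Sa M, \pik)$, not its theory, and trace definability in $(N^\kappa, \Sa N, \pik)$ for $\Sa N \succ \Sa M$ does not give trace definability in the original structure. The bound is automatic anyway, since local trace definability already supplies an injection $O \hookrightarrow M^n$.
\item Your final paragraph addresses a theory-level reading that the statement does not have. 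The proposed fix via Lemma~\ref{lem:ka}(2) would also only yield local trace definability in some model of $\Th(\Sa M)$, not in $\Sa M$ itself. The hypothesis $|M|^\kappa = |M|$ is exactly what lets the direct argument of your preceding paragraph work, so there is no remaining bookkeeping obstacle.
\end{enumerate}
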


Of course, there are arbitrarily large cardinals $\lambda$ such that $\lambda^\kappa = \lambda$, so any structure has an elementary extension satisfying the condition on $|M|$ in (2).

\begin{proof}
Lemma~\ref{lem:c} shows that $(M^\kappa, \Sa M, \pik)$ is locally trace definable in $\Sa M$ when we have $|M|^\kappa = |M|$.
Hence (2) follows by transitivity of local trace definability.
Suppose $\Sa M$ locally trace defines $\Sa O$.
By Lemma~\ref{lem:ka}(1) this is witnessed by a collection $\fik$ of $\kappa$ functions $O \to M$.
Let $\uptau \colon O \to M^\kappa$ be the map taking each $a \in M$ to the tuple with $i$th coordinate $f_i(a)$.
We show that $\uptau$ witnesses trace definability of $\Sa O$ in $(M^\kappa, \Sa M, \pik)$.
Let $X \subseteq O^n$ be $\Sa O$-definable.
After possibly permuting the $f_i$ we have
\[
X = \{ (a_1, \ldots, a_n) \in O^n : (f_1(a_{i_1}), \ldots , f_m(a_{i_m})) \in Y \}
\]
for some $i_1, \ldots, i_m \in \nset$ and $\Sa M$-definable $Y \subseteq M^m$.
Now let $Z$ be the set of $(b_1, \ldots, b_n) \in (M^\kappa)^n$ with $(\uppi_1(b_{i_1}), \ldots , \uppi_m(b_{i_m})) \in Y$.
Then $Z$ is definable in $(M^\kappa, \Sa M, \pik)$ and $X$ is the pullback of $Z$ via $\uptau$.
\end{proof}

Proposition~\ref{prop:key} is our key result on $\dkap$.

\begin{proposition}\label{prop:key}
Let $T, T^*$ be theories and $\kappa \ge \aleph_0$.
\begin{enumerate}[leftmargin=*]
\item $T$ locally trace defines $T^*$, and this is witnessed by  $\le \kappa$ functions, if and only if $\dkap$ trace defines $T^*$.
\item If $|T^*| \le \kappa$ then $T$ locally trace defines $T^*$ if and only if $\dkap$  trace defines $T^*$.
\item If $|T|, |T^*| \le \kappa$ then $T$ locally trace defines $T^*$ if and only if $\dkap$ trace defines $D^\kappa(T^*)$.
\item If $|T|, |T^*| \le \kappa$ then $T$ is locally trace equivalent to $T^*$ if and only if $\dkap$ is trace equivalent to $D^\kappa(T^*)$.
\end{enumerate}
Furthermore we have the following for any theory $T$.
\begin{enumerate}[label=(\alph*),leftmargin=*]
\item  $\dkap$ is the unique theory modulo trace equivalence satisfying (1) for all theories $T^*$.
\item If $\kappa\ge |T|$ then $\dkap$ is the unique theory of cardinality $\le\kappa$ modulo trace equivalence satisfying (2) for all theories $T^*$ of cardinality $\le\kappa$.
\end{enumerate}
\end{proposition}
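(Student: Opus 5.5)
The plan is to derive everything from Proposition~\ref{prop:prekey}, Lemma~\ref{lem:ka}, Lemma~\ref{lem:c} and transitivity.

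\emph{Part (1).} For the forward direction, suppose some $\Sa M\models T$ locally trace defines $\Sa O\models T^*$ via a family $\fik$ of $\le\kappa$ functions. Padding with repetitions if needed, I may assume there are exactly $\kappa$ of them. I then rerun the proof of Proposition~\ref{prop:prekey}(1) with this family directly, rather than invoking Lemma~\ref{lem:ka}(1). The map $\uptau\colon O\to M^\kappa$, $a\mapsto (f_i(a))_{i<\kappa}$, trace defines $\Sa O$ in $(M^\kappa,\Sa M,\pik)\models\dkap$. This step does not use $|T^*|\le\kappa$.

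For the converse, suppose $\Sa O\models T^*$ is trace definable in some $(P,\Sa M,\Pi)\models\dkap$ via $\uptau\colon O\to P^n$. Pass to an elementary extension in which $|P|\le|M|$; this is possible since the sorts can be resized by L\"owenheim--Skolem and saturation arguments, e.g.\ $(M^\kappa,\Sa M,\pik)$ with $|M|^\kappa=|M|$. Lemma~\ref{lem:c} then gives local trace definability of $(P,\Sa M,\Pi)$ in $\Sa M$, witnessed by the $\kappa+2$ functions $\Pi\cup\{\iota,\mathrm{id}\}$. Composing with the coordinates of $\uptau$ gives a family of at most $n\cdot\kappa=\kappa$ functions $O\to M$. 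Lemma~\ref{lem:ka}(3) shows that this family witnesses local trace definability of $\Sa O$ in $\Sa M$, and Lemma~\ref{lem:ka}(2) transfers the conclusion to all models of $T^*$.

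\emph{Parts (2)--(4).} Part (2) follows from (1) together with Lemma~\ref{lem:ka}(1): if $|T^*|\le\kappa$, any local trace definition can be witnessed by $\le\kappa$ functions.

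For (3), first suppose $T$ locally trace defines $T^*$. By Lemma~\ref{lem:c}, $D^\kappa(T^*)$ is locally trace equivalent to $T^*$, so by transitivity $T$ locally trace defines $D^\kappa(T^*)$. Since $|D^\kappa(T^*)|\le\kappa$ (its language is $L^*$ plus $\kappa$ function symbols), (2) shows that $\dkap$ trace defines $D^\kappa(T^*)$. Conversely, if $\dkap$ trace defines $D^\kappa(T^*)$, then it trace defines $T^*$, since $T^*$ is interpretable as the second sort; now apply (2).

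Part (4) is (3) applied in both directions, using $|T|\le\kappa$ for the reverse direction.

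\emph{Uniqueness clauses.} For (a), suppose $T'$ satisfies (1) for all $T^*$, meaning that for every $T^*$, $T'$ trace defines $T^*$ if and only if $T$ locally trace defines $T^*$ via $\le\kappa$ functions, if and only if $\dkap$ trace defines $T^*$. Take $T^*=T'$: then $\dkap$ trace defines $T'$. Take $T^*=\dkap$: then $T'$ trace defines $\dkap$. Hence the two are trace equivalent.

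For (b), the same argument works. Both $T'$ and $\dkap$ have cardinality $\le\kappa$, the latter because $|T|\le\kappa$, so each is an admissible choice of $T^*$.

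\emph{Main obstacle.} The delicate step is the reverse direction of (1). There I must bound the number of witnessing functions by $\kappa$ without assuming $|T^*|\le\kappa$, and I must arrange $|P|\le|M|$ so that Lemma~\ref{lem:c} applies. I would first try to note that $\dkap$ trace defining $T^*$ means some model does. Applying Lemma~\ref{lem:ka}(2)-style saturation, I would embed that model elementarily into $(M^\kappa,\Sa M,\pik)$ with $|M|^\kappa=|M|$. By completeness (Proposition~\ref{prop:pik1}(1)) such a structure is a model of $\dkap$, and trace definability passes to elementary extensions.
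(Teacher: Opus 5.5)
Your decomposition is the paper's. You prove (1) forward by rerunning the proof of Proposition~\ref{prop:prekey}(1), and (1) backward by composing the witnesses of Lemma~\ref{lem:c} with the coordinates of $\uptau$ via Lemma~\ref{lem:ka}(3). You get (2) from (1) and Lemma~\ref{lem:ka}(1), (3) via local trace equivalence of $D^\kappa(T^*)$ with $T^*$, (4) from (3), and (a), (b) by taking $T^*=T'$ and $T^*=\dkap$.

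The one step that fails as written is your device for getting $|P|\le|M|$. A given model $(P,\Sa M,\Pi)$ of $\dkap$ generally does not embed, elementarily or otherwise, into any structure $(N^\kappa,\Sa N,\pik)$. An embedding $e$ is forced on the first sort to be $p\mapsto(e(\uppi_i(p)))_{i<\kappa}$, so it exists only if $p\mapsto(\uppi_i(p))_{i<\kappa}$ is injective on $P$. This fails in every $\aleph_0$-saturated model. By the axioms of Proposition~\ref{prop:pik1}(2), the type $\{\uppi_i(x)=\uppi_i(p):i<\kappa\}\cup\{x\neq p\}$ over $\{p\}$ is finitely satisfiable. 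In particular $(N^\kappa,\Sa N,\pik)$ itself is never $\aleph_0$-saturated, so no saturation argument will produce such an embedding.

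The repair is immediate and is exactly the paper's route: do not resize the given model.
\begin{enumerate}
\item Apply Lemma~\ref{lem:c} to the single model $(N^\kappa,\Sa N,\pik)$ with $|N|^\kappa=|N|$. This gives local trace definability of \emph{some} model of $\dkap$ in a model of $T$ via $\kappa$ functions.
\item Use Lemma~\ref{lem:ka}(2), with $\dkap$ as the defined theory and $T$ as the defining one. This gives the same for \emph{every} model of $\dkap$, in particular the one trace defining $\Sa O$.
\item Finish with Lemma~\ref{lem:ka}(3) as you planned.
\end{enumerate}
Alternatively, an elementary extension with $|P|\le|M|$ does exist, just not of the form you proposed. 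Take $\Sa N\succeq\Sa M$ with $|N|^\kappa=|N|\ge|P|$, and put $P'=P\sqcup N^\kappa$ with the projections extended as in the proof of Proposition~\ref{prop:pik1}(3). Elementarity follows from Proposition~\ref{prop:pik1}(4). With this change the rest of your proposal goes through.
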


\begin{proof}
Note that (1) describes the class of theories that are trace definable in $\dkap$ and hence characterizes $\dkap$ up to trace equivalence.
Hence (a) follows from (1).
Likewise (b) follows from (2).
We prove (1) - (4).

\medskip
Lemma~\ref{lem:c} shows that $\dkap$ is locally trace definable in $T$ and that this is witnessed by $\kappa$ functions.
Hence if $T^*$ is trace definable in $\dkap$ then $T^*$ is locally trace definable in $T$ via $\kappa$ functions by Lemma~\ref{lem:ka}.
The right to left implication of (1) and (2) follows.

\medskip
The left to right implication of (1) follows by the proof of Proposition~\ref{prop:prekey}.
The left to right implication of (2) follows from the left to right implication of (1) and Lemma~\ref{lem:ka}.
Finally, (4) follows from (3) and (3) follows from (2) and local trace equivalence of $D^\kappa(T)$ with $T$.
\end{proof}

\begin{corollary}
\label{cor:gg}
Let $T$ be a theory and $\kappa=|T|$.
Then the following are equivalent.
\begin{enumerate}
\item $T$ is trace equivalent to $D^\kappa(T)$.
\item $T$ is trace equivalent to $D^\kappa(T^*)$ for some theory $T^*$ of cardinality $\le\kappa$.
\item Any theory of cardinality $\le\kappa$ which is locally trace definable in $T$ is already trace definable in $T$.
\end{enumerate}
\end{corollary}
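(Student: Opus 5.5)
We prove the cycle $(1)\Rightarrow(2)\Rightarrow(3)\Rightarrow(1)$, using Proposition~\ref{prop:key} with $\kappa=|T|$. Note $\kappa\ge\aleph_0$ by the conventions of Section~\ref{section:conventions}, and $|D^\kappa(T)|\le\kappa$ since its language is that of $T$ together with $\kappa$ function symbols and two sorts. The implication $(1)\Rightarrow(2)$ is immediate: take $T^*=T$, which has cardinality $\kappa$.

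For $(2)\Rightarrow(3)$, suppose $T$ is trace equivalent to $D^\kappa(T^*)$ with $|T^*|\le\kappa$. Let $T'$ be a theory with $|T'|\le\kappa$ that is locally trace definable in $T$.
\begin{itemize}
\item By Lemma~\ref{lem:c}, $T$ is locally trace equivalent to $D^\kappa(T^*)$, and hence to $T^*$.
\item Trace definability implies local trace definability, and local trace definability is transitive. So $T'$ is locally trace definable in $T^*$.
\item Proposition~\ref{prop:key}(2), applied to the pair $T^*,T'$, shows that $D^\kappa(T^*)$ trace defines $T'$.
\item Since $D^\kappa(T^*)$ is trace equivalent to $T$, transitivity of trace definability shows that $T$ trace defines $T'$.
\end{itemize}

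For $(3)\Rightarrow(1)$, we need both directions of trace definability between $T$ and $D^\kappa(T)$.
\begin{itemize}
\item $D^\kappa(T)$ trace defines $T$. This follows from Proposition~\ref{prop:key}(2), since $T$ trivially locally trace defines itself and $|T|\le\kappa$. More directly, the $\Sa M$-sort is part of the structure $(M^\kappa,\Sa M,\pik)$.
\item $T$ trace defines $D^\kappa(T)$. By Lemma~\ref{lem:c}, $D^\kappa(T)$ is locally trace definable in $T$, and $|D^\kappa(T)|\le\kappa$. So (3) applies and gives this.
\end{itemize}

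The only point requiring care is the cardinality bookkeeping. We must check that $|D^\kappa(T^*)|\le\kappa$ and $|D^\kappa(T)|\le\kappa$ hold so that Proposition~\ref{prop:key}(2) and hypothesis (3) apply. This holds because each such language consists of $\le\kappa$ symbols from $T$ or $T^*$ plus $\kappa$ projection symbols. We must also treat the two-sorted theory $D^\kappa(T)$ as one-sorted, which the paper allows. Apart from this, the argument is a direct chaining of the earlier results.
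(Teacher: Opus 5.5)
Your proof is correct and uses the same ingredients as the paper's proof, namely Proposition~\ref{prop:key} and Lemma~\ref{lem:c}. The paper gets (1)$\Leftrightarrow$(3) from the uniqueness clause Proposition~\ref{prop:key}(b) and (2)$\Rightarrow$(1) from Proposition~\ref{prop:key}(4); you instead close the cycle with (2)$\Rightarrow$(3) via Proposition~\ref{prop:key}(2) and argue (3)$\Rightarrow$(1) directly, which just unpacks those same statements (one small attribution slip: $T$ being locally trace equivalent to $D^\kappa(T^*)$ comes from your hypothesis, while Lemma~\ref{lem:c} supplies the local trace equivalence of $D^\kappa(T^*)$ with $T^*$).
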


\begin{proof}
Clearly (1) implies (2).
Equivalence of (1) and (3) follows by Proposition~\ref{prop:key}(b).
Suppose (2).
Then $T$ is locally trace equivalent to $T^*$ as $D^\kappa(T^*)$ is locally trace equivalent to $T^*$.
Hence $D^\kappa(T)$ is trace equivalent to $D^\kappa(T^*)$ by Proposition~\ref{prop:key}(4), and (1) follows.
\end{proof}

\begin{proposition}\label{prop:combine}
Let $T$ be a theory and $\kappa$ and $\lambda$ be infinite cardinals.
Then $D^\lambda(D^\kappa(T))$ is trace equivalent to $D^{\lambda + \kappa}(T)$.
\end{proposition}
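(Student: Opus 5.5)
The plan is to use the characterization in Proposition~\ref{prop:key}(1), in the form of the uniqueness statement (a). For any infinite $\mu$, the theories trace definable in $D^\mu(T)$ are exactly those $T^*$ that $T$ locally trace defines via $\le \mu$ functions. So it suffices to show that, for every theory $T^*$,
\[
D^\lambda(D^\kappa(T)) \text{ trace defines } T^* \iff T \text{ locally trace defines } T^* \text{ via } \le \lambda+\kappa \text{ functions}.
\]
Granting this, $D^\lambda(D^\kappa(T))$ and $D^{\lambda+\kappa}(T)$ trace define exactly the same theories. Applying this to $T^*$ equal to each of the two theories shows that each trace defines the other.

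For the left to right direction, Proposition~\ref{prop:key}(1) applied to the base theory $D^\kappa(T)$ shows that $D^\kappa(T)$ locally trace defines $T^*$ via $\le \lambda$ functions. By Lemma~\ref{lem:c}, a model of $D^\kappa(T)$ whose first sort has size at most the second is locally trace definable in a model of $T$ via $\kappa$ functions: namely $\fik$, an injection $\iota$, and the identity, which is still $\kappa$ functions since $\kappa$ is infinite. Lemma~\ref{lem:ka}(2) lets us choose the models so that the two witnessing families compose, and then Lemma~\ref{lem:ka}(3) shows that $T$ locally trace defines $T^*$ via $\lambda\kappa = \lambda+\kappa$ functions.

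For the right to left direction, suppose $T$ locally trace defines $T^*$ via $\le \lambda+\kappa$ functions. By Proposition~\ref{prop:key}(1) it is enough to show that $D^\kappa(T)$ locally trace defines $T^*$ via $\le \lambda$ functions. I would split into two cases.
\begin{enumerate}
\item If $\kappa \le \lambda$, then $\lambda+\kappa = \lambda$. A model $\Sa M$ of $T$ is the second sort of a model $(M^\kappa, \Sa M, \pik)$ of $D^\kappa(T)$, and every $\Sa M$-definable set is definable there. So the same $\le\lambda$ functions into $M$ witness local trace definability of $T^*$ in $D^\kappa(T)$, using Lemma~\ref{lem:ka}(2) to move to the appropriate model.
\item If $\lambda < \kappa$, then $\lambda+\kappa = \kappa$. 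Proposition~\ref{prop:key}(1) gives that $D^\kappa(T)$ outright trace defines $T^*$. By Definition~\ref{def:trace}(3) this is witnessed by finitely many functions, and hence by $\le \lambda$ functions.
\end{enumerate}

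The main point requiring care is bookkeeping rather than mathematics. Lemma~\ref{lem:ka} is phrased for one-sorted structures, while $D^\kappa(T)$ is two-sorted. I would handle this by passing to the one-sorted induced structure on the first sort, as remarked after Lemma~\ref{lem:c}, or by noting that the lemma's proofs go through verbatim for finitely-sorted structures. One must also check that the models in Lemma~\ref{lem:ka}(3) line up: the model of $D^\kappa(T)$ used to locally trace define $T^*$ should be one to which Lemma~\ref{lem:c} applies. Lemma~\ref{lem:ka}(2) allows this, since local trace definability via $\kappa$ functions transfers to every model.
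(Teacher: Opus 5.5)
Your proof is correct. The first half is the paper's argument: you compose the $\le\lambda$ witnessing functions with the $\kappa$ functions from Lemma~\ref{lem:c} via Lemma~\ref{lem:ka}(3), then invoke Proposition~\ref{prop:key}(1). The only difference is that the paper runs this once with $T^*=D^\lambda(D^\kappa(T))$, where Lemma~\ref{lem:c} supplies the $\lambda$ functions, rather than for an arbitrary $T^*$.

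For the reverse direction the paper does not use the characterization again. It observes that $D^\lambda(D^\kappa(T))$ interprets $D^\kappa(T)$, which is its second sort. It also interprets $D^\lambda(T)$: compose the $\lambda$ projections with any single one of the $\kappa$ projections and check the axioms of Proposition~\ref{prop:pik1}(2). Since $\lambda+\kappa\in\{\lambda,\kappa\}$, it therefore interprets $D^{\lambda+\kappa}(T)$, and Fact~\ref{fact:interpret} finishes.

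Your case split $\kappa\le\lambda$ versus $\lambda<\kappa$ is exactly this dichotomy. You reach each case through a second application of Proposition~\ref{prop:key}(1), now with base theory $D^\kappa(T)$. That is valid, but it brings in the bookkeeping you flag about witnessing families and lining up models in the two-sorted setting, which the direct interpretation avoids.

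Showing that both theories trace define the same class of $T^*$ gains nothing over trace equivalence, since by transitivity the two are equivalent. So your route is a uniform but longer version of the paper's short argument.
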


\begin{proof}
Let $\eta = \lambda + \kappa$.
Local trace definability of $D^\kappa(T)$ in $T$ is witnessed by $\kappa$ functions and local trace definability of $D^\lambda(D^\kappa(T))$ in $D^\kappa(T)$ is witnessed by $\lambda$ functions.
Hence by Lemma~\ref{lem:ka}(3) local trace definability of $D^\lambda(D^\kappa(T))$ in $T$ is witnessed by $\eta$ functions.
Hence $D^\lambda(D^\kappa(T))$ is trace definable in $D^{\eta}(T)$ by Proposition~\ref{prop:key}(1).
Finally $D^\lambda(D^\kappa(T))$  interprets both $D^\kappa(T)$ and $D^\lambda(T)$ and $\eta$ is either $\kappa$ or $\lambda$, so $D^\lambda(D^\kappa(T))$  interprets $D^\eta(T)$.
\end{proof}

Below we give examples of families of theories $\Cal T$ such that every member of $\Cal T$ is trace equivalent to some $\dkap$ for a fixed $\infty$-$\nip$ theory $T$.
The following theorem largely  describes trace definability between elements of such $\Cal T$.

\begin{thm}\label{thm:distinct}
Let $T$ be an arbitrary theory and $\kappa, \eta \ge |T|$.
\begin{enumerate}[leftmargin=*]
\item If $T$ is not trace maximal and $\kappa > |T|$ then $T$ does not trace define $D^\kappa(T)$.
\item If $T$ is $\infty$-$\nip$ then $D^\kappa(T)$ trace defines $D^\eta(T)$ if and only if $\eta \le \kappa$.
\end{enumerate}
\end{thm}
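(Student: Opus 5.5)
The plan is to prove (1) by a pigeonhole-and-compactness argument and then to reduce (2) to (1) using Proposition~\ref{prop:combine}.

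For (1), suppose towards a contradiction that $\Sa M \models T$ trace defines a model $(P, \Sa N, \pik)$ of $D^\kappa(T)$ via an injection $\uptau \colon P \hookrightarrow M^n$.
\begin{enumerate}[leftmargin=*]
\item For each $i < \kappa$, the equivalence relation $E_i(p,q) \Leftrightarrow \uppi_i(p) = \uppi_i(q)$ on $P$ is the pullback via $\uptau$ of a set $\varphi_i(M^{2n}, c_i)$, where $\varphi_i(x,y,z)$ is a parameter-free formula and $c_i$ is a parameter tuple.
\item There are only $|T|$ such formulas and $\kappa > |T|$, so by pigeonhole a single formula $\varphi(x,y,z)$ serves for $\kappa$ many indices $i$.
\item By Proposition~\ref{prop:pik1}(2), the relations $E_i$ for distinct $i$ are independent crosscutting equivalence relations. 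Hence for any finite set $F$ of distinct indices and any prescribed pattern of $E_i$-classes on finitely many new points, there are elements of $P$ realizing that pattern.
\item Passing to a saturated elementary extension of the pair $(\Sa M, (P, \Sa N, \pik), \uptau)$ and applying compactness, I expect to obtain an infinite $A \subseteq M^n$ such that every equivalence relation on $A$ is of the form $\varphi(M^{2n}, c) \cap A^2$ for some $c$.
\end{enumerate}
I expect step 4 to be delicate, because the parameters $c$ must vary while the points of $A$ stay fixed. I would get around this by first choosing finitely many points realizing all patterns simultaneously across many indices, which is possible since there are infinitely many indices sharing $\varphi$.

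It then remains to turn ``every equivalence relation on $A$ is traced by one formula'' into the characterization of trace maximality from \cite[Lemma~5.2]{trace1}: every subset of every $A^k$ is traced. Arbitrary unary predicates are easy: use classes of the form $S \cup \{a_0\}$. For arbitrary $k$-ary relations I would code tuples by elements, that is, replace $A$ by a sequence indexed by $k$-element subsets, and encode a relation $R$ by an equivalence relation merging each coded tuple in $R$ with a fixed marker point, then use finitely many such $\varphi$-instances. I expect this coding step to be the main obstacle. A fallback is to show that a single formula tracing all equivalence relations on an infinite set already has $k$-$\ip$ for every $k$ in a strong uniform way that yields trace maximality. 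Either route contradicts the assumption that $T$ is not trace maximal.

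For (2), the direction $\eta \le \kappa$ is immediate: $D^\kappa(T)$ interprets $D^\eta(T)$ by keeping only $\eta$ of the projections, and Fact~\ref{fact:interpret} applies. Conversely, suppose $\eta > \kappa$ and $D^\kappa(T)$ trace defines $D^\eta(T)$. Put $T' = D^\kappa(T)$, so $|T'| = \kappa$. By Lemma~\ref{lem:c}, $T'$ is locally trace equivalent to $T$, hence $\infty$-$\nip$ since that property is preserved under local trace definability. In particular $T'$ is not trace maximal, because a trace maximal theory locally trace defines everything and so is $\infty$-$\ip$ by Fact~\ref{fact:loc max}. By Proposition~\ref{prop:combine}, $D^\eta(T') = D^\eta(D^\kappa(T))$ is trace equivalent to $D^{\eta+\kappa}(T) = D^\eta(T)$. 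Therefore $T'$ trace defines $D^\eta(T')$ with $\eta > |T'|$, which contradicts (1) applied to $T'$.
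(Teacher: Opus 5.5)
Your proof of (2) is correct and is the paper's argument: reduce to (1) applied to $D^\kappa(T)$ via Proposition~\ref{prop:combine}. Steps 1--4 of (1) also go through: choose finitely many points realizing all partitions on distinct indices sharing $\varphi$, then use compactness and saturation.

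\textbf{The gap in (1).} The failure is in the final step. The property ``one formula $\varphi(x,y;z)$ traces every equivalence relation on an infinite $A$'' does not imply trace maximality, so neither your coding route nor your fallback can succeed.

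Here is a counterexample. Let $\Sa K$ be the \Fraisse limit of the finite two-sorted structures $(X,Z;E)$ with $E\subseteq X\times X\times Z$, where $E(-,-,c)$ is an equivalence relation on $X$ for each $c\in Z$. Free amalgamation, taking generated equivalence relations, shows this is an amalgamation class. By the extension property and $\aleph_1$-saturation, $E(x,y;z)$ traces every equivalence relation on any countable $A\subseteq X$ in a saturated model. But $\Th(\Sa K)$ has quantifier elimination in a language whose relations have arity at most three. So for any partition of variables into four groups, every atomic formula omits a group. By Chernikov--Palac\'{\i}n--Takeuchi, $3$-$\nip$ formulas are closed under boolean combinations, so $\Th(\Sa K)$ is $3$-$\nip$. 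By Fact~\ref{fact:loc max} it is therefore not trace maximal.

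The coding idea also fails independently. The code of a $k$-set is a different point of $A$, not a definable function of the $k$ points. So information about codes does not trace a $k$-ary relation on the points themselves.

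\textbf{What is missing.} You use the hypothesis only once, and only through the reduct $(P,N,\pik)$, i.e.\ essentially only $D^\kappa(\triv)$. The paper uses it repeatedly, via the universal property of $D^\kappa(T)$ in Proposition~\ref{prop:key}(2): since $T$ trace defines $D^\kappa(T)$, it trace defines every theory of cardinality $\le\kappa$ that it locally trace defines. The argument is then an induction on $m$.
\begin{enumerate}
\item Suppose $T$ trace defines $R_m$. Then $T$ locally trace defines $R^\kappa_m$ by Fact~\ref{fact:rel qe}(3), and hence trace defines it.
\item Your pigeonhole argument, applied to the $\kappa$ generic $m$-ary relations (Fact~\ref{fact:var case}), then yields $R_{m+1}$.
\item Thus $T$ locally trace defines every structure, so $T$ is $\infty$-$\ip$. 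In particular $T$ locally trace defines $(\Z;+,\times)$.
\item So $D^{\aleph_0}(T)$ trace defines $(\Z;+,\times)$. Since $D^{\aleph_0}(T)$ is a reduct of $D^\kappa(T)$, it is trace definable in $T$, and $T$ is trace maximal.
\end{enumerate}
Pigeonholing on the equivalence relations $E_i$ only ever produces a parametrized family of equivalence relations like $\Sa K$. It is feeding $R^\kappa_m$ back through $D^\kappa(T)$ that raises the arity at each stage.
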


It follows in particular that if $T$ is $\infty$-$\nip$ then the local trace equivalence class of $T$ contains a proper class of theories modulo trace equivalence.

\medskip
Theorem~\ref{thm:distinct}(1) is an application of Fact~\ref{fact:var case} below.
For any $m \ge 1$ and cardinal $\kappa \ge 1$ let $R^\kappa_m$  be the model completion of the theory of a set equipped with $\kappa$ relations, each of arity $m$.
This model completion exists and is complete by Fact~\ref{fact:jera}.
Set $R_m = R^1_m$ and note that $R_m$ is the theory of the \Fraisse limit of the class of finite $m$-ary relations.

\begin{fact}\label{fact:var case}
Suppose that $T$ is an arbitrary theory and $\kappa$ is a cardinal such that $\kappa > |T|$.
Fix $m \ge 1$.
If $T$ trace defines $R^\kappa_m$ then $T$ trace defines $R_{m + 1}$.
\end{fact}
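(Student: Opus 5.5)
We will use the criterion in Fact~\ref{fact:rel qe}(2): $R_{m+1}$ is the theory of a structure with quantifier elimination in a relational language. So it suffices to embed a model of $R_{m+1}$ (say the countable \Fraisse limit $(A;E)$) into an $(m+1)$-ary relation definable in some model of $T$. The idea is to use the hypothesis $\kappa > |T|$ in a pigeonhole argument: many of the $\kappa$ relations must be traced by instances of a single $L$-formula. We then use the parameter of that formula as an extra coordinate to encode one more variable.

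First we fix the setup. Let $\Sa M \models T$ and let $\uptau \colon O \hookrightarrow M^n$ witness trace definability of a model $\Sa O = (O; (S_i)_{i<\kappa})$ of $R^\kappa_m$. Since trace definability passes to elementary extensions of both sides, we may assume $\Sa O$ is $\aleph_1$-saturated; we may have to replace $\Sa M$ by an elementary extension, which is harmless. For each $i<\kappa$ choose an $L$-formula $\varphi_i(x_1,\ldots,x_m;z)$, with each $x_j$ an $n$-tuple, and parameters $b_i$ such that $S_i$ is the pullback via $\uptau$ of $\varphi_i(M^{nm};b_i)$. There are at most $|T|<\kappa$ formulas, so some formula $\varphi(x_1,\ldots,x_m;z)$ equals $\varphi_i$ for $\kappa$ many $i$; in particular it occurs for infinitely many $i$. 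Let $k$ be the length of $z$.

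Next we fix an injection $a \mapsto i_a$ from $A$ into this set of indices. The key step is to find an injection $g\colon A \hookrightarrow O$ with
\[
(A;E)\models E(a_0,a_1,\ldots,a_m) \iff \Sa O \models S_{i_{a_0}}(g(a_1),\ldots,g(a_m))
\]
for all $a_0,\ldots,a_m\in A$. This is a countable partial type over $\emptyset$ in variables $(v_a)_{a\in A}$, which by $\aleph_1$-saturation is realized as soon as it is finitely satisfiable. Finite satisfiability comes from Fact~\ref{fact:jera}: $R^\kappa_m$ is the model completion of the empty theory in $\kappa$ $m$-ary relations, so every finite such structure embeds into $\Sa O$. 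For a finite $A_0\subseteq A$, we build a finite structure on $A_0$ by declaring $S_{i_{a_0}}(a_1,\ldots,a_m)$ iff $E(a_0,\ldots,a_m)$ for $a_0,\ldots,a_m\in A_0$, and all other relations empty. This is well defined because $a\mapsto i_a$ is injective, and the $S_i$ are unconstrained relations, so repeated coordinates cause no trouble.

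Finally we define the target structure on $M^{n+k}$. Let $W$ be the $(m+1)$-ary relation
\[
W((u_0,c_0),\ldots,(u_m,c_m)) \iff \Sa M\models \varphi(u_1,\ldots,u_m;c_0).
\]
This is $\Sa M$-definable without extra parameters. Let $h \colon A\to M^{n+k}$ be given by $h(a)=(\uptau(g(a)), b_{i_a})$. Then $h$ is injective since $\uptau\circ g$ is, and by construction it embeds $(A;E)$ into $(M^{n+k};W)$. Fact~\ref{fact:rel qe}(2) now gives that $T$ trace defines $R_{m+1}$.

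The main obstacle is the construction of $g$ in the key step. We have to check carefully that the saturation and model-completion argument realizes the type. We also have to make sure the passage to a saturated $\Sa O$ keeps $\Sa O$ trace definable in a model of $T$. This holds because the trace-definability data can be carried to elementary extensions, as in the proof of Lemma~\ref{lem:ka}(2).
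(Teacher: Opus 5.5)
Your proposal is correct and is essentially the paper's argument: you pigeonhole on the defining formulas using $\kappa > |T|$, use $\aleph_1$-saturation of the model of $R^\kappa_m$ to find $g$ turning the relation index into an extra variable, and send $a$ to the pair consisting of the trace image of $g(a)$ and the parameter $b_{i_a}$, which is the paper's $(\uptau_0, \uptau_1)$ combined into one map. One small slip: when $\kappa$ is singular with $\mathrm{cf}(\kappa) \le |T|$, no single formula need be used for $\kappa$ many indices, but you only use that some formula is used for infinitely many indices, and this does follow from $\kappa > |T| \ge \aleph_0$.
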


Fact~\ref{fact:var case} follows from \cite[Prop.~2.5]{trace1}.
We sketch a proof.
Let $\Sa M = (M; (R_i)_{i < \kappa}) \models R^\kappa_m$ be $\aleph_1$-saturated and suppose that $\Sa N \models T$ trace defines $\Sa M$.
For the sake of simplicity we suppose that $M$ is a subset of $N$ and that $\Sa N$ trace defines $\Sa M$ via the inclusion $M \hookrightarrow N$.
For every $i < \kappa$ fix a parameter-free formula $\varphi_i(x_1,\ldots,x_{n_i},y_1,\ldots,y_m)$, and $\beta_i \in N^{n_i}$ such that
\[
\Sa N \models \varphi_i(\beta_i, a_1, \ldots, a_m) \quad \Longleftrightarrow \quad \Sa M \models R_i(a_1, \ldots
, a_m) \quad \text{for any $a_1, \ldots, a_m \in M$.}
\]
Now as $\kappa > |T| \ge \aleph_0$ there is a countably infinite $I \subseteq \kappa$ such that $i \mapsto n_i$ and $i \mapsto \varphi_i$ are constant on $I$.
Set $n = n_i$ and $\varphi = \varphi_i$ for any $i \in I$.
After possibly permuting suppose that $I = \omega$.
Let $\Sa O = (O; R)$ be the countable model of $R_m$ and let $(c_i)_{i < \omega}$ be an enumeration of $O$.
By saturation there is an injection $\uptau_1 \colon O \hookrightarrow M$ such that we have
\[
\Sa O \models R(c_i, a_1, \ldots, a_m) \quad \Longleftrightarrow \quad \Sa M \models R_i(\uptau_1(a_1), \ldots, \uptau_1(a_m)) \quad \text{for all $i < \omega$, $a_1, \ldots, a_m \in O$.}
\]
Let $\uptau_0$ be the map $O \to N^n$ given by $\uptau_0(c_i) = \beta_i$ for all $i < \omega$.
Consider $\uptau_1$ to be a map $O \to N$.
Now we have
\[
\Sa O \models R(a_0, \ldots, a_m) \quad \Longleftrightarrow \quad \Sa N \models \varphi(\uptau_0(a_0), \uptau_1(a_1), \ldots, \uptau_1(a_m)) \quad \text{for all $a_0, \ldots, a_m \in O.$}
\]

It follows by quantifier elimination for $\Sa O$ that $\Sa N$ trace defines $\Sa O$ via $(\uptau_0, \uptau_1)$.

\begin{proof}[Proof of Theorem~\ref{thm:distinct}]
(1):
We first prove the following.

\begin{Claim*}
$T$ is $\infty$-$\ip$ if and only if $D^{\aleph_0}(T)$ is trace maximal.
\end{Claim*}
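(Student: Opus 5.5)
Both directions go through $\infty$-$\nip$. Trace maximality of $D^{\aleph_0}(T)$ means it trace defines every structure, in particular every $R_{m}$. For the left-to-right direction I will use the characterization of trace maximality recalled above: it suffices to trace define one structure known to be trace maximal, say $(\Z;+,\times)$. For the right-to-left direction I will use preservation of $\infty$-$\nip$ under local trace definability.

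\emph{Right to left.} Suppose $D^{\aleph_0}(T)$ is trace maximal. Then it trace defines, and hence locally trace defines, every structure. By Lemma~\ref{lem:c}, $D^{\aleph_0}(T)$ is locally trace equivalent to $T$. By transitivity of local trace definability, $T$ locally trace defines every structure. Fact~\ref{fact:loc max} then shows that $T$ is $\infty$-$\ip$.

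\emph{Left to right.} Suppose $T$ is $\infty$-$\ip$. By Fact~\ref{fact:loc max}, $T$ locally trace defines every structure. Apply this to $T^* = \Th(\Z;+,\times)$, which is countable, so $|T^*| \le \aleph_0$. Lemma~\ref{lem:ka}(1),(2) shows that this local trace definability is witnessed by $|T^*| = \aleph_0$ functions. Proposition~\ref{prop:key}(1) with $\kappa = \aleph_0$ then gives that $D^{\aleph_0}(T)$ trace defines $(\Z;+,\times)$. Since trace maximality is equivalent to trace defining $(\Z;+,\times)$, $D^{\aleph_0}(T)$ is trace maximal.

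The only point needing care is the left-to-right direction. Proposition~\ref{prop:key}(1) needs the witnessing family to have size at most $\aleph_0$, and this is exactly why we test against a countable-language trace maximal structure rather than an arbitrary structure. Lemma~\ref{lem:ka}(1) bounds the family by $|T^*|$, not $|T|$, so no cardinality assumption on $T$ is needed. This is the main obstacle, though a mild one.

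For the remainder of (1), the Claim handles the case where $T$ is $\infty$-$\ip$: then $D^{\aleph_0}(T)$ is trace maximal, and $D^\kappa(T)$ interprets it, so $T$ trace defining $D^\kappa(T)$ would make $T$ trace maximal. In the $\infty$-$\nip$ case, I would show that $D^\kappa(T)$ trace defines $R^\kappa_m$ for suitable $m$, while $T$ does not trace define $R_{m+1}$, and then apply Fact~\ref{fact:var case}.
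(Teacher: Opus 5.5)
Your proof of the Claim is correct and follows the paper's argument. For right to left you use that $T$ locally trace defines $D^{\aleph_0}(T)$ (Lemma~\ref{lem:c}) together with Fact~\ref{fact:loc max}; for left to right you locally trace define $(\Z;+,\times)$ in $T$ and upgrade this, via Lemma~\ref{lem:ka} and Proposition~\ref{prop:key}, to trace definability in $D^{\aleph_0}(T)$. Your closing paragraph is not needed for the Claim, since it concerns the rest of Theorem~\ref{thm:distinct}(1); it is a workable contrapositive variant of the paper's induction, where you choose $m$ so that $T$ trace defines $R_m$ but not $R_{m+1}$.
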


\begin{claimproof}
By Fact~\ref{fact:loc max} a theory is $\infty$-$\ip$ if and only if it locally trace defines every structure.
The right to left implication follows as $T$ locally trace defines $D^{\aleph_0}(T)$.
For the other implication, note that if $T$ locally trace defines $(\Z; +, \times)$, then $D^{\aleph_0}(T)$ trace defines $(\Z; +, \times)$, hence $D^{\aleph_0}(T)$ is trace maximal as $(\Z; +, \times)$ is trace maximal.
\end{claimproof}

Suppose that $\kappa > |T|$ and that $T$ trace defines $D^\kappa(T)$.
We show that $T$ is trace maximal.
By the claim it suffices to show that $T$ locally trace defines every structure.
Let $L$ be a language containing a single $m$-ary relation for some $m \ge 1$.
By Definition~\ref{def:loc trace}(5) it is enough to show that any $L$-structure embeds into an $L$-structure definable in a model of $T$.
Now any $L$-structure embeds into a model of $R_m$, so it is enough to show that $T$ trace defines $R_m$.
We apply induction on $m$.
The case $m = 1$ is trivial.
Suppose that $T$ trace defines $R_m$.
Fact~\ref{fact:rel qe}(2) shows that $R_m$ locally trace defines $R^\kappa_m$.
It follows that $T$ locally trace defines $R_{m}^\kappa$, hence $T$ trace defines $R_{m}^\kappa$ as $|R^\kappa_{m}| = \kappa$.
Finally $T$ trace defines $R_{m + 1}$ by Fact~\ref{fact:var case}.

\medskip
(2):
Suppose that $T$ is $\infty$-$\nip$.
It suffices to suppose that $\kappa < \eta$ and show that $D^\kappa(T)$ does not trace define $D^\eta(T)$.
Now $D^\kappa(T)$ is $\infty$-$\nip$ as $\infty$-$\nip$ is preserved under local trace definability by Fact~\ref{fact:preserve}.
Hence $D^\kappa(T)$ does not trace define $D^\eta(D^\kappa(T))$ by (1).   
By Proposition~\ref{prop:combine} $D^\eta(D^\kappa(T))$ is trace equivalent to $D^\eta(T)$.
\end{proof}

\section{Criterion for trace equivalence to $\dkap$}\label{section:dkap}
We give some general results which will be used to show  that a theory $T^*$ is trace equivalent to $D^\kappa(T)$ for a simpler theory $T$.
In many of these cases $T^*$ is the relative model companion of the theory of a certain expansion of $T$ by unary functions.

\medskip
Let $L$ be a language and $L^*$ be a language extending $L$.
Let $T$ be a (possibly incomplete) $L$-theory and $T^*_0, T^*$ be (possibly incomplete) $L^*$-theories such that  $T^*_0 \subseteq T^*$ and $T$ is the $L$-reduct of both $T^*_0$ and $T^*$.
Then $T^*$ is the {\bf model companion of $T^*_0$ relative to $T$} when
\begin{enumerate}[leftmargin=*]
\item Any model of $T^*_0$ embeds into a model of $T^*$ in such a way that the induced embedding of $L$-structures is elementary.
\item If $\eta$ is an embedding between models of $T^*$ such that the induced embedding of $L$-structures is elementary then $\eta$ is elementary.
\end{enumerate}
For example $\dkap$ is the model companion of $\dkmin$ relative to the theory of two-sorted structures of the form $(P, \Sa M)$ where $P$ is a set and $\Sa M \models T$.
It is easy to see that the relative model companion is unique up to logical equivalence when it exists.

\medskip
Now suppose in addition that $L^*$ is an expansion of $L$ by functions.
Then we say that $T^*$ has {\bf quantifier elimination relative to $T$} if every $L^*$-formula in the variables $x_1, \ldots, x_n$ is equivalent in $T^*$ to a formula of the form 
\[
\vartheta(x_1, \ldots, x_n, t_1, \ldots, t_m)
\]
where $\vartheta(y_1, \ldots, y_{n + m})$ is an $L$-formula and $t_1, \ldots, t_m$ are $L^* \setminus L$-terms in the $x_i$.
Note that (2) above is satisfied when $T^*$ admits quantifier elimination relative to $T$.

\begin{fact}\label{fact:relqe}
Let $T$ and $T^*$ be as above and suppose $L^*$ is an expansion of $L$ by $\lambda$ unary functions.
If $T^*$ admits quantifier elimination relative to $T$ then $T^*$ is locally trace equivalent to $T$ and local trace definability of $T^*$ in $T$ is witnessed by $\lambda + \aleph_0$ functions.
\end{fact}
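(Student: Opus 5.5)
The two claims separate cleanly. $T$ locally trace defines $T^*$ simply because $T$ is the $L$-reduct of $T^*$: a model $\Sa M \models T^*$ trace defines its own $L$-reduct via the identity. So the real work is to show that $T$ locally trace defines $T^*$, and that $\lambda + \aleph_0$ functions suffice. We may pass to complete theories by fixing a complete extension; this is harmless, since relative quantifier elimination passes to completions.

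The witnessing family will be the term functions. Fix $\Sa M \models T^*$ and let $\Cal E$ be the collection of all functions $M \to M$ of the form $x \mapsto t(x)$, where $t$ ranges over $L^* \setminus L$-terms in one variable, including the variable $x$ itself. Since $L^* \setminus L$ consists of $\lambda$ unary function symbols, each such term is a finite composition $f_{i_1} \circ \cdots \circ f_{i_k}(x)$. There are therefore at most $\lambda + \aleph_0$ of them, so $|\Cal E| \le \lambda + \aleph_0$.

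Next I would check that $\Cal E$ witnesses local trace definability of $\Sa M$ in its $L$-reduct $\Sa M|_L$, using Definition~\ref{def:loc trace}(3). Let $X \subseteq M^n$ be $\Sa M$-definable, say by an $L^*$-formula $\psi(x_1,\ldots,x_n, c)$ with parameters $c$. By relative quantifier elimination, $\psi(x, z)$ is equivalent in $T^*$ to $\vartheta(x, z, t_1, \ldots, t_m)$, where $\vartheta$ is an $L$-formula and the $t_j$ are $L^* \setminus L$-terms in $x, z$. Because all the new symbols are unary, every such term is $s(x_j)$ or $s(z_k)$ for a single-variable term $s$.
\begin{itemize}
\item Terms in the parameters $z$ evaluate to constants once $z = c$, so they can be absorbed into the parameters of an $\Sa M|_L$-definable set.
\item Each term in $x$ has the form $\uptau(x_{j})$ with $\uptau \in \Cal E$.
\end{itemize}
Hence $X = \{a \in M^n : (\uptau_1(a_{j_1}), \ldots, \uptau_d(a_{j_d})) \in Y\}$ for some $\Sa M|_L$-definable $Y$. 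So $\Sa M|_L \models T$ locally trace defines $\Sa M$ via $\Cal E$. Lemma~\ref{lem:ka}(2) then transfers this to every model, if needed.

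The only delicate points are bookkeeping. One is the unary-term normal form, which needs the new symbols to be unary so that each term mentions a single variable. The other is handling the parameters; this is not really an obstacle, since parameters are allowed in the definable set $Y$.
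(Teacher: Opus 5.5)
Your proposal is correct and is the routine unpacking the paper has in mind when it says this fact is clear from the definitions. The single-variable $L^*\setminus L$-terms, including the identity, give at most $\lambda+\aleph_0$ functions into the $L$-reduct, and relative quantifier elimination (with parameters treated as extra variables) shows these functions witness local trace definability. One small slip: your opening sentence reverses the roles, since the reduct argument shows that $T^*$ trace defines $T$, not the other way round; you then handle the remaining direction correctly.
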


Fact~\ref{fact:relqe} is clear from the definitions.
We now prove a general lemma.

\begin{lemma}\label{lem;g}
Fix structures $\Sa O, \Sa M$ and $\kappa \ge \aleph_0$.
Suppose that $O\subseteq M^m$ and suppose that $\Sa M$ trace defines $\Sa O$ via the inclusion $O\to M^m$.
Let $X$ be an $\Sa M$-definable subset of $M^n$ and $\Cal E$ be a cardinality $\kappa$ family of $\Sa M$-definable functions $X \to M^m$ such that for any $b_1,\ldots,b_k \in O$ and distinct $g_1, \ldots, g_k \in \Cal E$ there is $p\in X$ satisfying $g_{i}(p )= b_i$ for each $i = 1,\ldots,k$.
Then $\Th(\Sa M)$ trace defines $D^\kappa(\Th(\Sa O))$.
\end{lemma}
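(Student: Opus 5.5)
We construct a model of $D^\kappa(\Th(\Sa O))$ directly inside an elementary extension of $\Sa M$ and show that $\Sa M$ trace defines it. Enumerate $\Cal E$ as $(g_i)_{i<\kappa}$ and consider the two-sorted structure $(X_O, \Sa O, (g_i|_{X_O})_{i<\kappa})$. Here $X_O$ is the set of $p \in X$ with $g_i(p) \in O$ for all $i<\kappa$. This $X_O$ may be empty, and even when it is not, the projections need not be jointly realizable on $O$. So we first pass to a sufficiently saturated setting.

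First step: replace $(\Sa M, O)$ by a $\kappa^+$-saturated elementary extension of the pair. Let $\Sa M^*$ be a $\kappa^+$-saturated elementary extension of $(\Sa M, O)$, with $O^*$ the interpretation of $O$. Then $\Sa M^* \models \Th(\Sa M)$, and $\Sa M^*$ trace defines the structure $\Sa O^*$ induced on $O^*$ via the inclusion. This holds because the traces $Y\cap O^m$ witnessing trace definability are first-order expressible in the pair, and $\Sa O^*\equiv\Sa O$. The hypothesis on $\Cal E$ is also first order in the pair for each finite tuple of distinct $g_i$, since each $g_i$ is $\Sa M$-definable. It therefore transfers, and by $\kappa^+$-saturation (the $g_i$ use at most $\kappa$ parameters) we get the following. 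For any family $(b_i)_{i<\kappa}$ in $O^*$ there is $p \in X^*$ with $g_i(p)=b_i$ for all $i$. Moreover, for any finite $F\subseteq\kappa$ and $(b_i)_{i\in F}$ there are infinitely many such $p$, again by compactness plus saturation, using that the $g_i$ are distinct. I expect this "infinitely many" part to be the main technical point. To get it, apply the hypothesis to $F$ together with extra distinct indices $j\notin F$ and varying targets $b_j$; distinct values of $g_j(p)$ force distinct $p$. This needs $|O|\ge 2$, and $O$ is infinite by convention.

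Second step: set $P=\{p\in X^* : g_i(p)\in O^* \text{ for all } i<\kappa\}$ and $f_i=g_i|_P$. By the first step and Proposition~\ref{prop:pik1}(2), the structure $(P,\Sa O^*,(f_i)_{i<\kappa})$ is a model of $D^\kappa(\Th(\Sa O))$. It suffices to show that $\Sa M^*$ trace defines this structure, or equivalently the structure induced on $P$, since the two are mutually interpretable as noted after Lemma~\ref{lem:c}. Because the sorts are finitely many, we may handle them jointly by using $P\sqcup O^*\subseteq M^{*n}\sqcup M^{*m}$, coded into a single power of $M^*$ with tag coordinates.

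Third step: verify trace definability using Proposition~\ref{prop:pik1}(4). Every definable set in the $D^\kappa$ structure is a boolean combination of two kinds of sets, so it suffices to handle each kind. The first kind are equality-only formulas on the $P$-variables, and these are traces of equality in $M^{*n}$ because $P\subseteq M^{*n}$ via the inclusion. The second kind are sets $\vartheta(f_{i_1}(x_{j_1}),\dots,f_{i_d}(x_{j_d}),y)$ with $\vartheta$ a formula of $\Th(\Sa O)$. By trace definability of $\Sa O^*$ in $\Sa M^*$ there is an $\Sa M^*$-definable $Y$ with $\vartheta(O^*)=Y\cap O^{*(d+k)}$. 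Then the set is the trace of $\{(x,y): (g_{i_1}(x_{j_1}),\dots,g_{i_d}(x_{j_d}),y)\in Y\}$, which is $\Sa M^*$-definable since the $g_i$ are. Boolean combinations of traces are traces, which finishes the argument, and since $\Sa M^*\models\Th(\Sa M)$ we conclude that $\Th(\Sa M)$ trace defines $D^\kappa(\Th(\Sa O))$.
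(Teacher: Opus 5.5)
Your proof is correct, but it is organized differently from the paper's.

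\textbf{The paper's route.} The paper never enlarges $O$. It passes to a saturated elementary extension $\Sa N$ of $\Sa M$ alone. There, trace definability via the inclusion survives verbatim, since $Y^{\Sa N}\cap O^k=Y\cap O^k$ for $\Sa M$-definable $Y$. For each $p\in O^\kappa$ it picks some $a_p\in X$ with $g_i(a_p)=p_i$. The map $p\mapsto a_p$ is automatically injective, and together with the inclusion $O\to M^m$ it embeds the standard model $(O^\kappa,\Sa O,\pik)$ into the definable structure $(X,\Sa P,(g_i)_{i<\kappa})$. After Morleyizing, Proposition~\ref{prop:pik1}(5) and Fact~\ref{fact:trace embedd} finish the proof.

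\textbf{Your route.} You extend the pair $(\Sa M,O)$, which forces three extra steps.
\begin{enumerate}
\item You must transfer trace definability to $O^*$. This is fine, because for each $L_{\Sa O}$-formula the witnessing $L_{\Sa M}$-formula is a first-order fact about the pair. Note that $\Sa O^*$ must then mean the $L_{\Sa O}$-structure given by the transferred traces, not the full structure induced on $O^*$ by $\Sa M^*$.
\item You take the whole preimage $P$ rather than a section. You therefore have to verify the axioms of Proposition~\ref{prop:pik1}(2) yourself, including the multiplicity condition. You do this correctly, using a spare index and distinct target values.
\item You compute the traces by hand from Proposition~\ref{prop:pik1}(4), rather than invoking quantifier elimination plus the embedding fact.
\end{enumerate}

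\textbf{Comparison.} The paper's argument is shorter, since the standard model is a model of $D^\kappa(\Th(\Sa O))$ by definition and no axiom check is needed. Yours avoids Morleyization and makes the trace witnesses explicit. The enlargement of $O$ is unnecessary: you could keep $O$ fixed, saturate only $\Sa M$, and take $P=\{p\in X^* : g_i(p)\in O \text{ for all } i<\kappa\}$. That removes the transfer step entirely.
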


\begin{proof}
Fix an enumeration $(g_i)_{i<\kappa}$ of $\Cal E$.
After possibly replacing $\Sa M$ with a $\max(\kappa,|O|)^+$-saturated elementary extension we suppose that for any sequence $(b_i)_{i<\kappa}$ of elements of $O$ there is $a_p \in X$ such that $g_i(a_p)=b_i$ for all $i<\kappa$.
After possibly Morleyizing suppose that $\Sa O$ admits quantifier elimination in a relational language $L$.
The proof of Fact~\ref{fact:rel qe}(2) shows that $\Sa O$ is a substructure of an $\Sa M$-definable $L$-structure $\Sa P$ with domain $M^m$.
Let $\uppi_i \colon O^\kappa \to O$ be the projection onto the $i$th coordinate for each $i < \kappa$.
Proposition~\ref{prop:pik1}(5) shows that $(O^\kappa, \Sa O, (\uppi_i)_{i<\kappa})$ admits quantifier elimination as $\Sa O$ admits quantifier elimination.
The inclusion $O \to M^m$ and the map $O^\kappa \to X$ given by $p \mapsto a_p$ gives an embedding of $(O^\kappa, \Sa O, (\uppi_i)_{i<\kappa})$ into $(X, \Sa P, (g_i)_{i<\kappa})$ and $(X, \Sa P, (g_i)_{i<\kappa})$ is definable in $\Sa M$.
Hence $\Sa M$ trace defines $(O^\kappa, \Sa O, (\uppi_i)_{i<\kappa})$ by Fact~\ref{fact:trace embedd}.
\end{proof}

We now consider structures that admit ``space-filling curves".

\begin{proposition}
\label{prop;ginsep}
Suppose $\Sa M$ admits either a definable injection $M^k \to M$ or a definable surjection $M\to M^k$ for some $k\ge 2$.
Then $\Th(\Sa M)$ is trace equivalent to $D^{\aleph_0}(\Th(\Sa M))$.
\end{proposition}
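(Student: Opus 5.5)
One direction is immediate: $D^{\aleph_0}(\Th(\Sa M))$ interprets $\Th(\Sa M)$, since the second sort is a model of it. So by Fact~\ref{fact:interpret} it suffices to show that $\Th(\Sa M)$ trace defines $D^{\aleph_0}(\Th(\Sa M))$. For this I will apply Lemma~\ref{lem;g} with $\Sa O = \Sa M$, $m = 1$, $\kappa = \aleph_0$, the inclusion being the identity. The task is then to produce an $\Sa M$-definable set $X$ and a countable family $\Cal E$ of $\Sa M$-definable functions $X \to M$ such that any finitely many distinct members of $\Cal E$ can take arbitrary prescribed values at a common point of $X$.

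First I reduce the injection case to the surjection case. Suppose $f \colon M^k \to M$ is a definable injection. Define $g \colon M \to M^k$ by $g(f(a)) = a$ on the image of $f$, and $g(b) = (b,\ldots,b)$ (or any fixed value) off the image. This $g$ is a definable surjection $M \to M^k$. So assume $s \colon M \to M^k$ is a definable surjection with $k \ge 2$. Taking the first two coordinates and composing gives a definable surjection $M \to M^2$, say $s = (s_1, s_2)$ with $s_1, s_2 \colon M \to M$. The pair $(s_1,s_2)$ is then jointly surjective: for any $a,b$ there is $p$ with $s_1(p) = a$ and $s_2(p) = b$.

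Next I build the countable family. Take $X = M$ and define $g_0 = s_1$, $g_1 = s_1 \circ s_2$, $g_2 = s_1\circ s_2\circ s_2$, and in general $g_i = s_1 \circ s_2^{(i)}$. These are the coordinate readouts of the ``infinite tuple'' coded by $p$. Given distinct indices, it is enough to realize arbitrary values $b_0,\ldots,b_n$ at $g_0,\ldots,g_n$, since we may enlarge the set of indices to an initial segment and choose the extra values arbitrarily. To do this, pick $q_n$ with $s_1(q_n) = b_n$. Then, inductively downward, pick $q_i$ with $s_1(q_i) = b_i$ and $s_2(q_i) = q_{i+1}$, using joint surjectivity. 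Finally set $p = q_0$. Then $s_2^{(i)}(p) = q_i$, so $g_i(p) = b_i$ for each $i \le n$. By Lemma~\ref{lem;g}, $\Th(\Sa M)$ trace defines $D^{\aleph_0}(\Th(\Sa M))$.

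The only point needing care is that the $g_i$ must be distinct as functions. This follows from the realizability itself: if $g_i = g_j$ with $i \ne j$, we could not realize $b_i \ne b_j$, and $M$ is infinite. So no real obstacle remains. The substantive step is the coding trick $g_i = s_1 \circ s_2^{(i)}$, together with the backward induction that shows finitely many coordinates can be prescribed.
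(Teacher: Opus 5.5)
Your proposal is correct and follows essentially the same route as the paper: you reduce to a definable surjection $M \to M^2$ with components $s_1, s_2$, take the family $s_1 \circ s_2^{(i)}$, realize finitely many prescribed values by iterating a choice of preimages, and conclude with Lemma~\ref{lem;g}; the paper's nested section $s(b_1, s(b_2, \ldots, s(b_n, c)\ldots))$ is exactly your backward induction. Your remarks that the functions are distinct and that arbitrary distinct indices reduce to an initial segment fill in small details the paper leaves implicit.
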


Recall that $g^{(n)}$ is the $n$-fold compositional iterate of a function $g \colon X \to X$ for each $n \ge 1$.

\begin{proof}
It suffices to show that $\Th(\Sa M)$ trace defines $D^{\aleph_0}(\Th(\Sa M))$.
Fix $p\in M^k$.
If $f \colon M^k \to M$ is a definable injection then we define a surjection $g \colon M \to M^k$ by declaring $g(a) = f^{-1}(a)$ when $a$ is in the image of $f$ and otherwise set $g(a) = p$.
Hence there is a definable surjection $g \colon M \to M^k$ for some $k \ge 2$.
Composing with any coordinate projection $M^k \to M^2$ reduces to the case $k = 2$.
Let $g_1, g_2$ be the definable functions $M \to M$ such that we have $g(a) = (g_1(a), g_2(a))$ for all $a \in M$.
Let $h_1 = g_1$ and $h_{n} = g_1 \circ g^{(n-1)}_2$ for all $n \ge 2$.
\begin{Claim*}
For any  $b_1,\ldots,b_n \in M$ there is $\gamma \in M$ such that $h_i(\gamma) = b_i$ for $i = 1,\ldots,n$.
\end{Claim*}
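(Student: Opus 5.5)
We argue by surjectivity of $g = (g_1, g_2) \colon M \to M^2$, building $\gamma$ backwards from the last coordinate. The key observation is that $h_{i+1} = h_i \circ g_2$ for every $i \ge 1$, since $h_{i+1} = g_1 \circ g_2^{(i)} = (g_1 \circ g_2^{(i-1)}) \circ g_2$ (with $g_2^{(0)}$ the identity, so this also covers $i = 1$). So it suffices to show, by induction on $n$, that for any $b_1, \ldots, b_n$ there is $\gamma$ with $g_1(\gamma) = b_1$ and $h_{i}(g_2(\gamma)) = b_{i+1}$ for $i = 1, \ldots, n-1$.

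The base case $n = 1$: surjectivity of $g$ gives $\gamma$ with $g(\gamma) = (b_1, c)$ for any $c$, so $h_1(\gamma) = g_1(\gamma) = b_1$. For the inductive step, given $b_1, \ldots, b_n$ with $n \ge 2$, apply the induction hypothesis to the $(n-1)$-tuple $b_2, \ldots, b_n$ to get $\delta \in M$ with $h_i(\delta) = b_{i+1}$ for $i = 1, \ldots, n-1$. Then use surjectivity of $g$ to pick $\gamma$ with $g(\gamma) = (b_1, \delta)$, that is, $g_1(\gamma) = b_1$ and $g_2(\gamma) = \delta$. Then $h_1(\gamma) = b_1$ and, for $2 \le j \le n$, $h_j(\gamma) = h_{j-1}(g_2(\gamma)) = h_{j-1}(\delta) = b_j$, as required.

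There is no real obstacle; the only point to check carefully is the indexing identity $h_{i+1} = h_i \circ g_2$, which is immediate from the definitions. After the claim, the proposition should follow from Lemma~\ref{lem;g} with $\Sa O = \Sa M$, $m = 1$, $X = M$ and $\Cal E = \{h_n : n \ge 1\}$. That application needs the $h_n$ pairwise distinct, or at least an infinite family; distinctness follows from the claim itself, since if $h_i = h_j$ with $i \ne j$, choosing $b_i \ne b_j$ contradicts the claim, and $M$ is infinite.
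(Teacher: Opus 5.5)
Your proof is correct and is essentially the paper's argument: the paper fixes a section $s$ of $g$ and sets $\gamma = s(b_1, s(b_2, \ldots, s(b_n, c)\ldots))$ for an arbitrary $c \in M$, which is your induction written out as a single nested expression. Your extra remark is also correct and fills in a detail the paper leaves implicit: the claim forces the $h_n$ to be pairwise distinct, so Lemma~\ref{lem;g} applies to a countably infinite family.
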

By Lemma~\ref{lem;g} it is enough to prove the claim.
Fix $c \in M$ and let $s \colon M^2 \to M$ be a section of $f$, so $s(a_1,a_2) = b$ implies $a_i = g_i(b)$ for $i = 1,2$. Let
$$\gamma = s(b_1, s(b_2,\ldots,s(b_{n-1}, s(b_n,c))\ldots))$$
Then $h_i(\gamma) = b_i$ for each $i = 1, \ldots, n$.
\end{proof}

We now give three general lemmas that we use to show that various examples of an expansion of a theory $T$ by unary functions are trace equivalent to $D^\kappa(T)$.
Below ``independence" means independence with respect to model-theoretic algebraic closure.

\begin{lemma}\label{lem:dkap}
Suppose that $T$ is an $L$-theory.
Let $L^*$ be an expansion of $L$ by a collection of $\kappa \ge \aleph_0$ unary functions.
Suppose that $T^*$ is an $L^*$-theory extending $T$ and $T^*_0$ is a (possibly incomplete) $L^*$-theory contained in $T$ such that we have the following.
\begin{enumerate}[leftmargin=*]
\item $T^*$ is the model companion of $T^*_0$ relative to $T$ and $T^*$ admits quantifier elimination relative to $T$.
\item If $A \subseteq\Sa M \models T$ and $(f_i)_{i < \kappa}$ is a family of functions $A \to M$  so that $A$ is independent over $\bigcup_{i < \kappa} f_i(A)$, then each $f_i$ extends to a  function $f^*_i \colon M \to M$ in such a way that $(\Sa M, (f^*_i)_{i < \kappa}) \models T^*_0$.
\end{enumerate}
Then $T^*$ is trace equivalent to $D^\kappa(T)$.
\end{lemma}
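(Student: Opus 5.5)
Two directions. First, $T^*$ is trace definable in $D^\kappa(T)$. By hypothesis (1), $T^*$ admits quantifier elimination relative to $T$, and $L^*\setminus L$ consists of $\kappa$ unary functions. Fact~\ref{fact:relqe} then shows that $T$ locally trace defines $T^*$, witnessed by $\kappa + \aleph_0 = \kappa$ functions. Proposition~\ref{prop:key}(1) then gives that $D^\kappa(T)$ trace defines $T^*$.

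Second, $T^*$ trace defines $D^\kappa(T)$. I will realize a copy of $(M^\kappa, \Sa M, \pik)$ inside a model of $T^*$ and use Fact~\ref{fact:trace embedd}, after Morleyizing $T$ so that $T$ and hence $D^\kappa(T)$ have quantifier elimination (Proposition~\ref{prop:pik1}(5)). Take $\Sa M \models T$ and set $P = M^\kappa$. Choose an elementary extension $\Sa N \succ \Sa M$ together with an injection $\iota \colon P \to N$ whose image $A$ is algebraically independent over $M$. Such an $\iota$ exists by compactness, since $\Sa M$ is infinite and $\acl$ is non-trivial enough; if $T$ has a trivial $\acl$ I will simply take new realizations of a non-algebraic type over $M$ and everything already chosen. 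Define $f_i \colon A \to N$ by $f_i(\iota(p)) = \uppi_i(p)\in M$. Then $\bigcup_i f_i(A) \subseteq M$ and $A$ is independent over $M$, so hypothesis (2) applies. It extends the $f_i$ to $f_i^*$ on $N$ with $(\Sa N, (f^*_i)) \models T^*_0$.

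By the relative model companion property (1), this embeds into some $(\Sa N', (f'_i)) \models T^*$ with $\Sa N \preceq \Sa N'$ as $L$-structures. Now $\Sa M \preceq \Sa N'$, and the pair (inclusion $M\to N'$, $\iota \colon P \to N'$) is an embedding of $(M^\kappa, \Sa M, \pik)$ into the two-sorted structure $(N', \Sa N', (f'_i)_{i<\kappa})$. This is injective on the first sort and commutes with projections, since $f'_i(\iota(p)) = \uppi_i(p)$. The target is not quite a model of $D^\kappa(T)$, but it is a structure in the same language definable in $(\Sa N', (f'_i))$. The Morleyized $L$-relations on the second sort are interpreted by the corresponding $T$-definable sets, and these restrict correctly to $M$ because $\Sa M\preceq\Sa N'$. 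Fact~\ref{fact:trace embedd} then gives that $(\Sa N',(f'_i))$ trace defines $(M^\kappa,\Sa M,\pik)$, hence $T^*$ trace defines $D^\kappa(T)$.

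The main obstacle is making sure the embedding is an embedding of structures with quantifier elimination in a common language, given that the target has only $\kappa$ relevant functions and the full $L^*$ structure. I handle this by discarding everything except $L$ and the $f'_i$. A second, minor point is obtaining the independent set $A$ of size $|M|^\kappa$, which compactness supplies. If hypothesis (2) is meant with independence over $\bigcup f_i(A)$ only, independence over the larger set $M$ suffices.
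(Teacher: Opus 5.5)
Your proposal is correct and follows the paper's proof. The first direction is identical (Fact~\ref{fact:relqe} plus Proposition~\ref{prop:key}(1)); in the second you likewise place an independent copy of $P$ over $M$ in an elementary extension, extend the $f_i$ via hypothesis (2), and pass to a model of $T^*$ via the relative model companion. The only difference is at the end: the paper applies Lemma~\ref{lem;g} to an arbitrary model $(P,\Sa M,\fik)$, whereas you take $P = M^\kappa$ and write out that lemma's embedding argument via Fact~\ref{fact:trace embedd} directly, which removes the need for the saturation step inside Lemma~\ref{lem;g}.
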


\begin{proof}
It follows from Fact~\ref{fact:relqe} and (1) that $T^*$ is locally trace definable in $T$, and this is witnessed by $\kappa$ functions.
Hence $T^*$ is trace definable in $D^\kappa(T)$ by Proposition~\ref{prop:key}(1).
Fix $(P, \Sa M, \fik) \models D^{\kappa}(T)$.
Let $\Sa N$ be a $\max(|P|, \kappa)^+$-saturated elementary extension of $\Sa M$.
Up to isomorphism, we may suppose that $P$ is a subset of $N$ which is independent over $\Sa M$.
Applying (2), we get a function $f^*_i \colon N \to N$ extending $f_i$ for each $i < \kappa$ such that $(\Sa N, (f^*_i)_{i < \kappa}) \models T^*_0$.
Let $(\Sa N^*, (f^*_i)_{i < \kappa})$ be a model of $T^*$ extending $(\Sa N, (f^*_i)_{i < \kappa})$ such that $\Sa N$ is an elementary submodel of $\Sa N^*$.
An application of Lemma~\ref{lem;g} shows that $(\Sa N^*, (f^*_i)_{i < \kappa})$ trace defines a model of $D^\kappa(T)$.
\end{proof}

\begin{lemma}\label{lem:two un}
Suppose that $T$ is an $L$-theory, and $L^*$ is an expansion of $L$ by at least two and at most $\aleph_0$ unary functions.
Suppose that $T^*$ is an $L^*$-theory extending $T$ and $T^*_0$ is a (possibly incomplete) $L^*$-theory contained in $T^*$ such that we have the following.
\begin{enumerate}[leftmargin=*]
\item $T^*$ is the model companion of $T^*_0$ relative to $T$ and $T^*$ admits quantifier elimination relative to $T$.
\item There are $f, g \in L^* \setminus L$ such that if $\Sa M \models T^*_0$, $\Sa N$ is an elementary extension of the $L$-reduct of $\Sa M$, $b, b^* \in M$, and $a \in N \setminus M$, then there is a model $\Sa N^*$ of $T^*_0$ expanding $\Sa N$ which satisfies $f(a) = b$, $g(a) = b^*$.
\end{enumerate}
Then $T^*$ is trace equivalent to $D^{\aleph_0}(T)$.
\end{lemma}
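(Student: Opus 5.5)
One direction is immediate. By (1) and Fact~\ref{fact:relqe}, $T^*$ is locally trace definable in $T$, and this is witnessed by $\aleph_0$ functions, since $L^*\setminus L$ is countable. Proposition~\ref{prop:key}(1) then shows that $D^{\aleph_0}(T)$ trace defines $T^*$. The real work is the converse: showing that $T^*$ trace defines $D^{\aleph_0}(T)$. The plan is to imitate the ``space-filling curve'' argument of Proposition~\ref{prop;ginsep}, with the definable surjection $M \to M^2$ replaced by the pair $(f,g)$, and then finish with Lemma~\ref{lem;g}.

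The construction proceeds as follows. Set $h_1 = f$ and $h_n = f\circ g^{(n-1)}$ for $n\ge 2$, which are $L^*$-terms. I will build a model $\Sa N^*\models T^*$ with $\Sa M\preceq \Sa N^*|_L$ (for some $\Sa M\models T$) such that for every $b_1,\ldots,b_n\in M$ there is $\gamma$ with $h_i(\gamma)=b_i$ for $i=1,\ldots,n$. Start with any $\Sa M_0\models T^*_0$ whose $L$-reduct is a model $\Sa M$ of $T$. Given a finite tuple $b_1,\ldots,b_n$, take new elements $a_n,\ldots,a_1$ successively in elementary extensions of the $L$-reduct, each outside the previous model. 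Apply (2) to set $f(a_n)=b_n$, $g(a_n)=c$ for a fixed $c\in M$, then $f(a_{n-1})=b_{n-1}$, $g(a_{n-1})=a_n$, and so on down to $a_1$. Then $\gamma=a_1$ satisfies $g^{(i-1)}(\gamma)=a_i$ and $h_i(\gamma)=b_i$. Iterating over all finite tuples, taking unions of chains (each stage an $L$-elementary extension, and $T^*_0$ presumably preserved under such unions, or handled by compactness), and finally embedding into a model of $T^*$ via (1) with $L$-elementary inclusion, yields the desired $\Sa N^*$. A cleaner alternative is to note that, since $T^*$ has quantifier elimination relative to $T$ and is the relative model companion, the existence of such $\gamma$ is expressed by the consistency of a type. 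So it suffices to realize each finite instance in some extension, and then pass to a saturated model of $T^*$ by compactness.

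To conclude, let $\Sa O$ be the $L$-reduct of $\Sa M$ viewed inside $\Sa N^*$. Then $\Sa N^*$ trace defines $\Sa O$ via the identity inclusion $M\subseteq N^*$, because $\Sa M\preceq \Sa N^*|_L$, so every $\Sa M$-definable set is the trace of an $L$-definable set. Apply Lemma~\ref{lem;g} with $X=N^*$, $m=1$, and $\Cal E=\{h_n:n\ge 1\}$, a countably infinite family of distinct definable functions. The hypothesis of Lemma~\ref{lem;g} concerns distinct $g_1,\ldots,g_k\in\Cal E$, which may be any finite subset $\{h_{n_1},\ldots,h_{n_k}\}$; this is covered by taking $n=\max n_j$ and arbitrary values at the other indices. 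Hence $T^*$ trace defines $D^{\aleph_0}(\Th(\Sa O))=D^{\aleph_0}(T)$.

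I expect the main obstacle to be the bookkeeping in the construction step. Hypothesis (2) requires $\Sa M\models T^*_0$ and $a\in N\setminus M$, so each new element must be added over a model already carrying a $T^*_0$-structure, and the chain must remain a model of $T^*_0$ with $L$-elementary steps. The compactness route should avoid explicit chains: work in a saturated $\Sa N^*\models T^*$ with a small elementary $L$-submodel $\Sa M$ that is also an $L^*$-substructure, which is possible by quantifier elimination relative to $T$. Extend the $L$-reduct elementarily, apply (2) $n$ times, embed into a model of $T^*$ over $\Sa N^*$ using (1), and use relative quantifier elimination to pull the witness $\gamma$ back into $\Sa N^*$.
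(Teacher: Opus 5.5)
Your proposal is correct, and the first direction is exactly the paper's. For the converse you take a mildly different route. The paper applies hypothesis (2) only once. For $\Sa M \models T^*$ and $b,b^*\in M$ it obtains a $T^*_0$-extension containing a witness $a$, and uses that $\Sa M$ is existentially closed in it to find such an $a$ in $M$. It concludes that $a\mapsto (f(a),g(a))$ is a definable surjection $M\to M^2$, and then cites Proposition~\ref{prop;ginsep}, whose proof builds the nested witness $s(b_1,s(b_2,\ldots))$ for $h_1,\ldots,h_n$ inside the model. You instead apply (2) $n$ times in a tower of $L$-elementary extensions to build the nested witness externally. You then pull the single existential formula $\exists\gamma\bigwedge_i h_i(\gamma)=b_i$ back via the relative model companion property. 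In effect you reprove the claim in Proposition~\ref{prop;ginsep} model-theoretically. The paper's version is shorter and needs (2) only once. Yours needs each successive application of (2) to preserve the values of $f,g$ already assigned, so it leans harder on reading (2) as producing an $L^*$-extension of the given $T^*_0$-model; the paper also uses this reading implicitly. You can drop the chain-of-unions variant, since it would need $T^*_0$ to be preserved under unions, which is not assumed. The saturation, compactness, and small submodel are also unnecessary: the pull-back works in any $\Sa N^*\models T^*$, and you may take $\Sa O$ to be the $L$-reduct of $\Sa N^*$ itself when applying Lemma~\ref{lem;g}.
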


\begin{proof}
After possibly Morleyizing we suppose $T$ admits quantifier elimination.
Then $T^*$ is the model companion of $T^*_0$.
By Fact~\ref{fact:relqe} $T^*$ is locally trace definable in $T$, and this is witnessed by countably many functions.
Hence $T^*$ is trace definable in $D^{\aleph_0}(T)$ by Proposition~\ref{prop:key}(1).
We show that $T^*$ trace defines $D^{\aleph_0}(T)$.
Fix $\Sa M \models T^*$ and let $f,g$ be as in (2).
By Proposition~\ref{prop;ginsep} it is enough to show that the map $M \to M^2$ given by $a \mapsto (f(a), g(a))$ is surjective.
We fix $b, b^* \in M$ and produce $a \in M$ such that $f(a) = b$ and $g(a) = b^*$.
By (2) there is a model $\Sa N^*$ of $T^*_0$ such that the $L$-reduct of $\Sa N^*$ extends $\Sa M$ and we have $f(a) = b$ and $g(a) = b^*$ for some $a \in N \setminus M$.
Now $\Sa M$ is existentially closed in $\Sa N^*$, so there is such an $a$ in $M$.
\end{proof}

A {\bf definable topology} on an $L$-theory $T$ is an $L$-formula $\varphi(x, y_1, \ldots, y_n)$  such that the collection of sets of the form $\{ \alpha \in M : \Sa M \models \varphi(\alpha, \beta)\}$ for $\beta \in M^{n}$ is a basis for a topology on $M$ for any $\Sa M \models T$.
In this situation we also equip each $M^m$ with the product topology.

\begin{lemma}\label{lem:one un}
Suppose that $T$ is a $\nip$ $L$-theory equipped with a definable topology.
Let $L^*$ be an expansion of $L$ by $\kappa \ge 1$ unary functions.
Suppose that $T^*$ is an $L^*$-theory extending $T$ and satisfying the following.
\begin{enumerate}[leftmargin=*]
\item $T^*$ has quantifier elimination relative to $T$.
\item If $(\Sa M, \fik) \models T^*$ then $\{(f_{i_1}(a), \ldots, f_{i_n}(a)) : a \in M\}$ is dense in $M^{n}$ for any distinct $i_1, \ldots, i_n < \kappa$ and $\{(a, f_i(a), f^{(2)}_i(a),\ldots,f^{(m)}_i(a)) : a \in M\}$ is  dense in $M^{m + 1}$ for each $i < \kappa$ and $m \ge 1$.
\item There is $(\Sa N, \fik) \models T^*$, an $\Sh N$-definable equivalence relation $E$ on $N$, and an $\Sh N$-definable open subset $V$ of $N$ such that every $E$-class is open and the structure induced on $V/E$ by $\Sh N$ is an expansion of a model $\Sa O$ of $T$ up to interdefinability.
\end{enumerate}
Then $T^*$ is trace equivalent to $D^{\kappa + \aleph_0}(T)$.
\end{lemma}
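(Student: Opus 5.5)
Both directions will be handled separately. For the easy direction, hypothesis (1) and Fact~\ref{fact:relqe} show that $T^*$ is locally trace definable in $T$ via $\kappa+\aleph_0$ functions. Proposition~\ref{prop:key}(1) then gives that $D^{\kappa+\aleph_0}(T)$ trace defines $T^*$. The work is in showing that $T^*$ trace defines $D^{\kappa+\aleph_0}(T)$.

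Fix $(\Sa N,\fik)$, $E$, $V$ and $\Sa O$ as in (3). Since $T^*$ is $\nip$ (it is locally trace definable in the $\nip$ theory $T$, and $\nip$ is $1$-$\nip$, so Fact~\ref{fact:preserve} applies), Fact~\ref{fact:she} lets me replace $(\Sa N,\fik)$ by its Shelah completion $\Sa N^{\mathrm{Sh}}_*$. This completion also contains the functions $f_i$, so it suffices to show that it trace defines $D^{\kappa+\aleph_0}(\Th(\Sa O))=D^{\kappa+\aleph_0}(T)$. I would apply Lemma~\ref{lem;g} inside $\Sa N^{\mathrm{Sh}}_*$, with the following choices:
\begin{itemize}
\item $\Sa O$ is realised on $V/E$. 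Fix a choice of representatives to get $O\subseteq N$; this is a section of the quotient, as in Fact~\ref{fact:interpret}. Then $\Sa N^{\mathrm{Sh}}_*$ trace defines $\Sa O$ via this inclusion, since the structure induced on $V/E$ expands $\Sa O$.
\item Rather than the representatives themselves, I use the definable quotient map $q\colon V\to V/E$ and functions into $V/E$. This fits Lemma~\ref{lem;g} after passing to $M^m$ through a definable section-free formulation. Concretely, one needs only the condition that for each $b\in O$ the set $q^{-1}(b)$ is definable, so it is cleaner to rerun the proof of Lemma~\ref{lem;g} with targets the $E$-classes.
\end{itemize}
The family $\Cal E$ will consist of the $\kappa+\aleph_0$ functions $q\circ f_i$ for $i<\kappa$ and $q\circ f_{i_0}^{(n)}$ for $n\ge 1$. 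The latter are needed when $\kappa$ is finite, and restricting to them suffices, though including both is harmless if they are indexed so as to be pairwise distinct. Each is defined on the definable set $X=\bigcap f^{-1}(V)$, taken over the finitely many relevant compositions. It is more convenient to take $X=N$ and let the functions be partial, sending points outside $V$ to a fixed class.

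The key surjectivity check is the following. Given distinct functions from $\Cal E$ and classes $b_1,\dots,b_k\in V/E$, each $q^{-1}(b_j)$ is open by (3). The tuple $(f_{i_1}(a),\dots,f_{i_k}(a))$ involves distinct $i$'s, and the iterate tuple is $(f_i(a),\dots,f_i^{(m)}(a))$. Mixing distinct coordinates and iterates of a single $i$ requires a combined density statement, and (2) does not directly provide one. I would avoid the mixture by using only one type of family:
\begin{itemize}
\item if $\kappa\ge\aleph_0$, use the $f_i$;
\item if $\kappa$ is finite, use the iterates of $f_0$.
\end{itemize}
In either case (2) says that the relevant image is dense in $N^k$. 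It therefore meets the nonempty open box $q^{-1}(b_1)\times\dots\times q^{-1}(b_k)$, which yields $a$ with $q(g_j(a))=b_j$ for all $j$.

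I expect the main obstacle to be this bookkeeping: Lemma~\ref{lem;g} is stated with $O\subseteq M^m$ and functions landing exactly in $O$, whereas here we only hit $E$-classes. I would handle it by rerunning the proof of Lemma~\ref{lem;g} with $\Sa P$ the definable $L$-structure on $V$ pulled back along $q$, whose relations are $E$-invariant. Because the relations are $E$-invariant, the equality of $\Sa O$ must become $E$, and the embedding must be replaced by an embedding modulo $E$. The embedding of $(O^{\kappa'},\Sa O,\uppi)$ still exists: quantifier elimination (Proposition~\ref{prop:pik1}(5)) after Morleyization makes only the relations, equality on the $O$-sort (now handled by $E$), and the equations $\uppi_i(x)=y$ (now $g_i(x)\mathrel{E}y$) matter. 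Distinct points of $O^{\kappa'}$ go to distinct $a_p$ by saturation. Finally, $\Sa N^{\mathrm{Sh}}_*$ is trace equivalent to $(\Sa N,\fik)$, so $T^*$ trace defines $D^{\kappa+\aleph_0}(T)$.
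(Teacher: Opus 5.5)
Your proposal is correct and follows the paper's proof essentially step for step. The easy direction goes via relative quantifier elimination, Fact~\ref{fact:relqe} and Proposition~\ref{prop:key}(1). The hard direction passes to a Shelah completion and applies Lemma~\ref{lem;g} to $\uppi\circ f_i$ when $\kappa$ is infinite and to $\uppi\circ f_0^{(n)}$ when $\kappa$ is finite, with (2) and the openness of the $E$-classes giving the surjectivity hypothesis.

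The paper applies Lemma~\ref{lem;g} directly with $O=V/E$ in the quotient sort $N/E$, so your ``modulo $E$'' rerun is unnecessary but valid. Drop your aside that including both families is ``harmless'': as you note yourself, (2) gives no mixed density statement.
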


Recall that $\Sh M$ is the Shelah completion of a $\nip$ structure $\Sa M$.

\begin{proof}
As above (1) shows that $T^*$ is trace definable in $D^{\kappa + \aleph_0}(T)$.
It follows that $T^*$ is $\nip$ by Fact~\ref{fact:preserve}.
Let $(\Sa N, \fik), V, E$ be as in (3).
Note that $(\Sh N, \fik)$ is a reduct of the Shelah completion of $(\Sa N, \fik)$.
It follows by Fact~\ref{fact:she} that $(\Sh N, \fik)$ is trace definable in $T^*$.
Hence it is enough to show that $\Th(\Sh N, \fik)$ trace defines $D^{\kappa + \aleph_0}(T)$.
Let $\uppi \colon N \to N/E$ be the quotient map, let $O = V/E$, and let $\Sa O \models T$ be as in (3).

\medskip
We first treat the case when $\kappa$ is finite.
Let $f = f_0$ and let $g_i \colon N \to N/E$ be given by $g_i(a) = \uppi(f^{(i)}(a))$ for all $i \in \N$.
By Lemma~\ref{lem;g} it is enough to show that for any $b_0,\ldots,b_n \in O$ there is $a \in N$ such that $g_i(a) = b_i$ for each $i = 0,  \ldots, n$.
So we need $a \in N$ such that each $f^{(i)}(a)$ is in $\uppi^{-1}(b_i)$.
This follows from (2) as each $\uppi^{-1}(b_i)$ is open and nonempty.

\medskip
We now suppose $\kappa$ is infinite.
For each $i < \kappa$ let $g_i \colon N \to N/E$ be given by $g_i(a) = \uppi(f_i(a))$.
It is enough to show that for any $b_1, \ldots, b_n \in O$ and distinct $i_1, \ldots, i_n < \kappa$ there is $a \in N$ such that $g_{i_j}(a) = b_j$ for $j = 1, \ldots, n$.
Again, this follows from (2).
\end{proof}

\section{The trivial theory}
We consider the trivial theory $\triv$ of an infinite set equipped with equality.
We show that several theories are trace equivalent to $D^\kappa(\triv)$.
We first recall the relevant theories.
We let $E_\kappa$ be the model completion of the theory of a set equipped with $\kappa$ equivalence relations.
It is well-known that $E_\kappa$ exists.
Let $F_\kappa$ be the model completion of the theory of a set equipped with $\kappa$ unary functions for each $\kappa \ge 1$.
This theory exists by Fact~\ref{fact:jera}.
Let $A_\kappa$ be the model completion of the theory of a set $M$ equipped with $\kappa$ commuting functions $M\to M$.
Existence of $A_\kappa$ follows from Gould's results that the theory of actions of a coherent semigroup has a model completion \cite[Thm.~6]{Gould1987} and  that any free abelian semigroup is coherent \cite[Thm.~4.3]{Gould1992}.
Each theory admits quantifier elimination as they are the model completions of universal theories.
Finally, each of these theories is complete as it is easy to see that any two models jointly embed into a third.

\medskip
A \textbf{Jo\'nsson-Tarski algebra} is a structure $(M;p,l,r)$ where $p$ is a function $M^2\to M$ and $l,r$ are functions $M\to M$ such that for all $a,a^*\in M$ we have:
\begin{enumerate}[leftmargin=*]
\item $p(l(a),r(a))=a$
\item $l(p(a,a^*))=a$ and $r(p(a,a^*))=a'$.
\end{enumerate}
So the theory of J\'onsson-Tarski algebras is definitionally equivalent to the theory of a set $M$ equipped with a bijection $M \to M^2$.
J\'onsson and Tarski showed that the free J\'onsson-Tarski algebra on $n$ generators is isomorphic to the free J\'onsson-Tarski algebra on one generator for any $n \ge 2$~\cite{JoTa}.
We call this structure the \textbf{free J\'onsson-Tarski algebra}.
Bouscaren and Poizat showed that the theory of locally free (i.e. every finitely generated subalgebra is free) J\'onsson-Tarski algebras is complete and admits quantifier elimination~\cite{Bouscaren_Poizat_1988}.
Hence the theory of the free J\'onsson-Tarski algebra is the theory of locally free J\'onsson-Tarski algebras.

\begin{proposition}\label{prop;triv}
\hspace{.000001cm}
\begin{enumerate}[leftmargin=*]
\item If $\kappa \ge \aleph_0$ then both $A_\kappa$ and $E_\kappa$ are trace equivalent to $D^\kappa(\triv)$.
\item If $\kappa \ge 2$ then $F_\kappa$ is trace equivalent to $D^{\kappa + \aleph_0}(\triv)$.
\item The theory of the free J\'onsson-Tarski algebra is trace equivalent to $D^{\aleph_0}(\triv)$.
\end{enumerate}
\end{proposition}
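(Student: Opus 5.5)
The plan is to use the general criteria of Section~\ref{section:dkap}, treating each part separately.

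For (1), take $A_\kappa$ first, in the language of $\kappa$ unary functions, and apply Lemma~\ref{lem:dkap} with $T=\triv$, $T^*_0$ the theory of $\kappa$ commuting functions, and $T^*=A_\kappa$. Since $\triv$ has quantifier elimination, $A_\kappa$ is the model completion of $T^*_0$ and has quantifier elimination. Hence it is the relative model companion, and it eliminates quantifiers relative to $\triv$ in the sense needed (atomic formulas are equalities between terms). For condition (2) of the lemma, algebraic closure in $\triv$ is trivial, so $A$ independent over $\bigcup_i f_i(A)$ means $A$ is disjoint from $\bigcup_i f_i(A)$. Given functions $f_i\colon A\to M$ with this property, I extend them to commuting functions on $M$ by setting $f^*_i(c)=c_0$ for a fixed $c_0\notin A$ and all $c\notin A$. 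Then every composite $f^*_if^*_j$ is constantly $c_0$, so the $f^*_i$ commute. Because $f_i(A)\cap A=\emptyset$, this works even when $f_i(a)\in A$ would otherwise cause trouble; that is exactly where disjointness is used.

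For $E_\kappa$ the language has no function symbols, so I argue directly. First, $E_\kappa$ is locally trace definable in $\triv$ via $\kappa$ functions: in a saturated model, send $a$ to its $E_i$-class, identifying classes with elements of a large trivial set. Quantifier elimination for $E_\kappa$ reduces every formula to the relations $E_i$, which are equalities of these invariants, so Proposition~\ref{prop:key}(1) gives that $D^\kappa(\triv)$ trace defines $E_\kappa$. For the converse, I use Lemma~\ref{lem;g} with $\Sa O$ a trivial set living on $M/E_0$. This requires an imaginary; since trace definability passes through interpretations (Fact~\ref{fact:interpret}), I can take a section of $M/E_0$ and replace the maps by $g_i(a)=$ the chosen representative of the $E_0$-class meeting $[a]_{E_i}$. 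Avoiding that choice is cleaner: the equivalence relations with $i\ge1$ together with quantifier elimination give \emph{via embedding} that $(M,(E_i)_{i\ge 1})$ contains $\kappa$ independent partitions. Concretely, the model of $D^\kappa(\triv)$ on $P$ with projections $\pi_i$ embeds into the structure $(P;\ker\pi_i)$, which is a model of $E_\kappa$'s universal part. I need the genericity (any finitely many classes of distinct $E_i$ intersect infinitely) to recover the trivial sort, which is interpretable as $M/E_i$. So I show $E_\kappa$ interprets a model of $D^\kappa(\triv)$: use sort $M$, sort $M/E_0$ as the trivial structure, and maps sending $a$ to a bijection-transported class of $E_i$. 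I expect the bookkeeping of identifying all quotients $M/E_i$ with one set to be the main obstacle. I would resolve it by saturation plus the fact that trace definability only needs maps into $M/E_0$: composing with a choice of representatives, the maps $M\to M$ defined on a subset are traceable.

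For (2), apply Lemma~\ref{lem:two un} when $\kappa\le\aleph_0$. Condition (2) of that lemma is trivial for the empty theory of unary functions: just redefine $f,g$ at the new point $a$. This gives trace equivalence with $D^{\aleph_0}(\triv)=D^{\kappa+\aleph_0}(\triv)$. For uncountable $\kappa$, use Lemma~\ref{lem:dkap} with the same trivial extension argument, this time without needing commutation.

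For (3), the definable bijection $M\to M^2$ gives via Proposition~\ref{prop;ginsep} that the theory trace defines $D^{\aleph_0}$ of itself, hence of $\triv$. For the other direction, Bouscaren–Poizat quantifier elimination yields quantifier elimination relative to $\triv$ in the language $l,r$ plus $p$. Here $p$ is binary, but $p(a,a^*)=b$ iff $l(b)=a\wedge r(b)=a^*$, so terms reduce to $l,r$-terms applied to variables, together with equalities. Fact~\ref{fact:relqe} and Proposition~\ref{prop:key}(1) then finish.
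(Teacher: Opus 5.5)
\textbf{There is a genuine gap in part (1): you do not establish that $E_\kappa$ trace defines $D^\kappa(\triv)$.}

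By Proposition~\ref{prop:pik1}(4), for $(P, M, \fik) \models D^\kappa(\triv)$ the structure induced on $P$ is generated by equality together with the relations $f_i(x) = f_j(y)$ for all $i, j < \kappa$. The whole difficulty is the cross relations with $i \neq j$.

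\begin{itemize}
\item Your structure $(P; \ker \uppi_i)$ records only the diagonal relations $f_i(x) = f_i(y)$. Moreover, it lives in the model of $D^\kappa(\triv)$, so it gives the direction you had already handled.
\item Your proposed interpretation needs bijections $M/E_i \to M/E_0$. These are not definable in $E_\kappa$: by quantifier elimination and genericity, no definable relation correlates $E_i$-classes with $E_0$-classes when $i \neq 0$. So what you describe is not an interpretation.
\item The remark about ``saturation plus a choice of representatives'' never produces an injection, or definable sets whose pullbacks are the relations $f_i(x) = f_j(y)$.
\item Lemma~\ref{lem;g} is no shortcut either. It needs $\kappa$ definable maps into a single sort with independently prescribable values, while the natural definable maps $M \to M/E_i$ land in different sorts.
\end{itemize}

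The missing idea is to double the domain, so that each cross relation becomes an equivalence relation.
\begin{itemize}
\item Put $P^* = P \times \{1,2\}$.
\item For each pair $(i,j)$, let $E_{i,j}$ be the kernel of the map $P^* \to M$ given by $(a,1) \mapsto f_i(a)$ and $(a,2) \mapsto f_j(a)$.
\item There are $\kappa$ such pairs, so $(P^*; (E_{i,j})_{i,j<\kappa})$ is a set with $\kappa$ equivalence relations. Since $E_\kappa$ is the model completion, it embeds into some $\Sa N \models E_\kappa$.
\item With $\uptau_m(a) = (a,m)$ for $m = 1,2$, we have $f_i(a) = f_j(b)$ if and only if $\Sa N \models E_{i,j}(\uptau_1(a), \uptau_2(b))$. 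Equality is handled because $\uptau_1$ is injective.
\item By quantifier elimination for $D^\kappa(\triv)$, the pair $(\uptau_1, \uptau_2)$ therefore witnesses trace definability.
\end{itemize}

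\textbf{The rest of the proposal is sound.}
\begin{itemize}
\item Your argument that $D^\kappa(\triv)$ trace defines $E_\kappa$ takes a different route from the paper. You use the class maps to witness local trace definability in $\triv$ via $\kappa$ functions and then apply Proposition~\ref{prop:key}(1). The paper instead observes directly that $D^\kappa(\triv)$ interprets $E_\kappa$ via the kernels of the projections. Your route works once you add an injection $M \to N$ to the witnessing functions, so that equality is handled.
\item Your treatments of $A_\kappa$, $F_\kappa$, and the J\'onsson--Tarski algebra match the paper. In the $A_\kappa$ case, your choice $c_0 \notin A$ is actually needed for the $f^*_i$ to commute.
\end{itemize}
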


The bounds on $\kappa$ are sharp as $A_\kappa$ and $E_\kappa$ are totally transcendental when $\kappa$ is finite, $D^{\aleph_0}(\triv)$ is not totally transcendental, and total transcendence is preserved under trace definability by Fact~\ref{fact:preserve}.
See Lemma~\ref{lem:A fin} below for total transcendence of $A_\kappa$.


\begin{proof}
We first consider $F_\kappa$.
The case when $\kappa$ is infinite follows easily by applying Lemma~\ref{lem:dkap} with $T = \triv$, $T^*_0$ the theory of a set equipped with $\kappa$ unary functions, and $T^* = F_\kappa$.
The case when $\kappa$ is finite follows by applying Lemma~\ref{lem:two un} with $T$, $T^*_0$, and $T^*$ as before.

\medskip
Now suppose that $\kappa$ is infinite and consider $A_\kappa$.
We apply Lemma~\ref{lem:dkap} with $T = \triv$, $T^*_0$ the theory of a set equipped with $\kappa$ commuting maps, and $T^* = A_\kappa$.
It is enough to fix a set $M$, a subset $A\subseteq M$, and a family $\fik$ of functions $A \to M$ such that $A$ is disjoint from $\bigcup_{i < \kappa} f_i(A)$, and show that each $f_i$ extends to a function $f^*_i \colon M \to M$ such that the $f^*_i$ commute.
Fix arbitrary $p \in M$ and declare $f^*_i(a) = f_i(a)$ when $a \in A$ and otherwise $f^*_i(a) = p$.
Then $f^*_i \circ f^*_j$ is the constant $p$ function for any $i,j < \kappa$, hence the $f^*_i$ commute.

\medskip
We now consider $E_\kappa$ for $\kappa \ge \aleph_0$.
Fix $(P, M ; \fik) \models D^\kappa(\triv)$.
For each $i < \kappa$ let $E_i$ be the equivalence relation on $P$ given by declaring $E_i(a, a^*)$ if and only if $f_i(a) = f_i(a^*)$.
It is easy to see that $(M ; (E_i)_{i < \kappa}) \models E_\kappa$, so $F_\kappa$ interprets $E_\kappa$.
Let  $\Sa P$ be the induced structure on $P$.
It suffices to show that $E_\kappa$ trace defines $\Sa P$.
Let $P^* = P \times \{1, 2\}$.
For each $i, j < \kappa$ let $f_{i, j}$ be the function $P^* \to P$ given by 
\[
f_{i, j}(a, m) =
\begin{cases}
f_i(a) & \text{when $m = 1$ } \\
f_j(a) & \text{when $m = 2$.}
\end{cases}
\]
Furthermore let $E_{i, j}$ be the equivalence relation on $P^*$ given by declaring $E_{i, j}(b, b^*)$ when $f_{i, j}(b) = f_{i, j}(b^*)$ for each $i, j < \kappa$.
Let $\Sa N = (N ; (E_{i, j})_{i, j < \kappa})$ be a model of $E_\kappa$ extending $(P^* ; (E_{i, j})_{i, j < \kappa})$.
Given $i = 1, 2$ let $\uptau_i \colon P \to N$ be given by $\uptau_i(a) = (a, i)$.
We show that $\uptau_1, \uptau_2$ witness trace definability of $\Sa P$ in $\Sa N$.
By quantifier elimination for $D^\kappa(\triv)$ it is enough to consider sets of the form
\[
X = \{ (a, b) \in P^2 : f_i(a) = f_j(b) \}
\]
for some $i, j < \kappa$.
In this case we have $(a, b) \in X$ if and only if $\Sa N \models E_{i , j}(\uptau_1(a), \uptau_2(b))$.

\medskip
Now let $\mathrm{JT}$ be the theory of locally free J\'onsson-Tarski algebras.
Proposition~\ref{prop;ginsep} shows that $\mathrm{JT}$ is trace equivalent to $D^{\aleph_0}(\mathrm{JT})$.
By Proposition~\ref{prop:key}(4) it is enough to show that $\mathrm{JT}$ is locally trace equivalent to $\triv$.
Fix $\Sa M=(M;p,r,l)\models\mathrm{JT}$.
Any term $t(x_1,\ldots,x_n)$  in $\Sa M$ is equivalent to a term of the form $t^*(s_1(x_{i_1}),\ldots,s_m(x_{i_m}))$ for some $\{p\}$-term $t^*(y_1,\ldots,y_m)$, $\{l,r\}$-terms $s_1,\ldots,s_m$, and $i_1,\ldots,i_m\in\nset$.
Now $t^*$ is built up by composing $p$, so it follows that there are $\{l,r\}$-terms $u_1,\ldots,u_m$ such that
\[
t^*(y_1,\ldots,y_m) = z\quad\Longleftrightarrow\quad \bigwedge_{j = 1}^{m} u_j(z) = y_j.
\]
Hence we have 
\[
t(x_1,\ldots,x_n) = z\quad\Longleftrightarrow\quad \bigwedge_{j=1}^{n} u_j(z) = s_j(x_{i_j}).
\]
It follows that $(M;r,l)$ is interdefinable with $\Sa M$ and admits quantifier elimination.
Hence $(M;r, l)$ admits quantifier elimination relative to the trivial structure on $M$, and so $(M; r, l)$ is locally trace definable in the trivial structure on $M$ by Fact~\ref{fact:relqe}.
\end{proof}

\section{Vector spaces, modules, and $\mathrm{DOAG}$}
Let $\F$ be a field and $\vvec_\F$ be the theory of infinite $\F$-vector spaces.
We show that certain theories of modules over $\F$-algebras are trace equivalent to $D^\kappa(\vvec_\F)$.
An $\F$-algebra is a unitary ring $R$ with an embedding of $\F$ into the center of $R$.
In particular an $\F_p$-algebra is just a characteristic $p$ ring and a $\Q$-algebra is just a ring containing $\Q$ as a subring.
Let $R$ be a ring.
We follow the usual convention by considering an $R$-module to be a first order structure consisting of an abelian group $V$ equipped with unary functions $(\lambda_r)_{r\in R}$ such that $r\mapsto\lambda_r$ gives a homomorphism from $R$ to the endomorphism ring of $V$.
We first recall a basic fact about theories of modules.

\begin{fact}\label{fact:R-mod complete}
Let $R$ be a ring.
If the theory of $R$-modules has a model companion then the model companion is a complete theory.
\end{fact}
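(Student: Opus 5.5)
The plan is to use the standard criterion: a complete theory is determined by its $\aleph_0$-invariants, so it suffices to show that all models of the model companion $T^*$ are elementarily equivalent. I would rely on the Baur--Monk/Ziegler analysis of theories of modules. By Baur--Monk, every formula is equivalent modulo the theory of $R$-modules to a boolean combination of invariant sentences and positive primitive formulas. Two $R$-modules $V,W$ are therefore elementarily equivalent exactly when, for every pair of pp formulas $\psi\le\varphi$ in one variable, the indices $[\varphi(V):\psi(V)]$ and $[\varphi(W):\psi(W)]$ agree after truncation at $\infty$. So the task is to show that these invariants are constant across models of $T^*$.

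First, $T^*$ is model complete and its models are exactly the existentially closed $R$-modules; existence of the model companion gives this. Next, for any $R$-module $V$, the module $V\oplus V'$ is existentially closed whenever $V'\models T^*$, since pp formulas and their negations relativize well to direct sums. The more robust route is the following observation: if $V\models T^*$ and $W$ is any $R$-module, then $V$ embeds into some model of $T^*$ extending $V\oplus W$. By model completeness, $V$ is an elementary submodel of that extension, and so the invariants of $V$ dominate those of $W$ in the following sense. If $[\varphi(W):\psi(W)]>1$, then $V\oplus W$ has index at least $2$, and hence so does $V$; since $V\preceq V\oplus W\oplus\cdots$, iterating with $W^{(n)}$ shows the index in $V$ is at least $2^n$ for all $n$, i.e.\ infinite.

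This yields a dichotomy for each pair $\psi\le\varphi$. Either $\varphi(W)=\psi(W)$ for every $R$-module $W$, so the index is $1$ in every model, or the index is infinite in every model of $T^*$. In either case the invariant does not depend on the model, so any two models of $T^*$ have the same invariants and are elementarily equivalent by Baur--Monk. The step I expect to need the most care is justifying that the larger module $E\supseteq V\oplus W^{(n)}$ (a model of $T^*$) has index at least $2^n$. The point is that pp-definable subgroups are preserved and reflected along pure embeddings, and $V\oplus W^{(n)}\to E$ need not be pure. I would handle this using the fact that pp formulas are existential: witnesses of $\varphi\wedge\neg\psi$-distinctness, i.e.\ $2^n$ elements of $\varphi$ pairwise differing outside $\psi$, are expressed by an existential-and-universal condition. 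The clean fix is to note that positive-index statements of the form ``$\exists x\,(\varphi(x)\wedge\neg\psi(x))$'' are existential sentences. Such sentences transfer upward from $V\oplus W^{(n)}$ to $E$ only if $\neg\psi$ is preserved, which fails in general. I would therefore instead use that $E$ is existentially closed, so it is a model of the completion-independent theory. Alternatively, I would invoke the known result (Eklof--Sabbagh) that the model companion, when it exists, is the theory of absolutely pure ($\mathrm{pp}$-injective) modules with the invariants forced as above; for such modules every embedding is pure, and this resolves the obstacle.
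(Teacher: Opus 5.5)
There is a genuine gap. The dichotomy you derive is false, and it fails exactly at the purity issue you flag. Take $R=\Z$ (Noetherian, hence coherent, so the model companion exists), $\varphi(x)$ the formula $x=x$ and $\psi(x)$ the formula $\exists y\,(x=py)$. For $W=\Z$ the index is $p>1$, so your argument predicts that $[V:pV]$ is infinite for every $V\models T^*$. But models of $T^*$ are the existentially closed abelian groups, and every such group is divisible: it is existentially closed in its divisible hull, where $\exists y\,(py=a)$ holds. So $[V:pV]=1$. Concretely, the embedding of $V\oplus\Z$ into a model $E$ of $T^*$ is not pure, and the index collapses in $E$. The same example refutes $V\preceq V\oplus W\oplus\cdots$, and also your side claim that $V\oplus V'$ is existentially closed whenever $V'\models T^*$, since $\Z\oplus\Q$ is not divisible.

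Your proposed repairs do not close the gap. Absolute purity of $V$ makes embeddings \emph{out of $V$} pure, not embeddings of $V\oplus W$ for an arbitrary module $W$. The appeal to Eklof--Sabbagh ``with the invariants forced as above'' inherits the false dichotomy. And ``$E$ is a model of the completion-independent theory'' is not an argument.

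The missing step is one line away from what you already wrote. Take $W$ to be another model of $T^*$ rather than an arbitrary module. Then $V\oplus W$ is an $R$-module, so it embeds into some $E\models T^*$. The induced embeddings $V\to E$ and $W\to E$ are embeddings between models of the model complete theory $T^*$, hence elementary. Therefore $V\equiv E\equiv W$, and $T^*$ is complete. This is exactly the paper's proof: joint embedding via direct sums plus model completeness. No Baur--Monk invariants are needed.
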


\begin{proof}
It is enough to show that any two $R$-modules jointly embed into a third.
This follows by taking the direct sum.
\end{proof}

\begin{fact}\label{fact:vec}
Any two vector spaces over division rings of the same characteristic are locally trace equivalent.
\end{fact}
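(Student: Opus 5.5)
Let $V$ be a vector space over a division ring $D$ and $W$ a vector space over a division ring $D'$, where $D$ and $D'$ have the same characteristic, and let $\F$ be the common prime field ($\F_p$ or $\Q$). By symmetry and transitivity of local trace definability, it suffices to show that $V$ is locally trace equivalent to an infinite $\F$-vector space. One direction is immediate. Restricting scalars to $\F \subseteq D$ makes $V$ an infinite $\F$-vector space, and the $\F$-vector space structure on $V$ is a reduct of the $D$-module structure, hence definable. So $V$ (locally) trace defines $\vvec_\F$.

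For the converse we show that $\vvec_\F$ locally trace defines $\Th(V)$. We use quantifier elimination for vector spaces over a division ring (infinite, in the language of $D$-modules). Every definable set is a boolean combination of solution sets of linear equations
\[
\lambda_{r_1}(x_1) + \cdots + \lambda_{r_n}(x_n) = c
\]
with $r_1, \ldots, r_n \in D$ and $c \in V$. Fix an $\F$-basis of $V$ and regard $V$ as an $\F$-vector space $V_\F$. Each $\lambda_r$ is then an $\F$-linear endomorphism of $V_\F$. Let $\Cal E = \{\lambda_r : r \in D\}$, viewed as functions $V \to V_\F$. The set defined by the displayed equation is
\[
\{(a_1, \ldots, a_n) : (\lambda_{r_1}(a_1), \ldots, \lambda_{r_n}(a_n)) \in Y\},
\]
where $Y = \{(y_1, \ldots, y_n) : y_1 + \cdots + y_n = c\}$ is definable in $V_\F$. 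Boolean combinations of such sets are handled in the same way by taking the union of the finitely many functions involved and the corresponding boolean combination of the sets $Y$. Hence $\Cal E$ witnesses local trace definability of $V$ in $V_\F$ in the sense of Definition~\ref{def:loc trace}(3). Here $\lambda_1$ is the identity, so $\Cal E$ also handles equality and the group operation. By Lemma~\ref{lem:ka}(2) this passes to every model of $\Th(V)$.

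Finally, all infinite $\F$-vector spaces have the same complete theory $\vvec_\F$. Combining the two directions, $V$ and $W$ are each locally trace equivalent to $\vvec_\F$, hence to each other.

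The main obstacle is the quantifier elimination statement, which must include inhomogeneous equations with parameters. This is standard: the theory of infinite vector spaces over a fixed division ring is complete, $\aleph_1$-categorical, and eliminates quantifiers. The finite case is excluded by the paper's convention that structures are infinite.
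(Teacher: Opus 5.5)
Your proof is correct and is essentially the paper's argument. The paper notes, in a footnote, that the scalar multiplication maps $v \mapsto \lambda v$ for $\lambda \in \D$ witness local trace definability of a $\D$-vector space in its underlying additive group, which is exactly your witnessing family $\Cal E$ justified by quantifier elimination. Your $V_\F$ is interdefinable with that additive group, and the remaining steps (the reduct direction, completeness of $\vvec_\F$, and transitivity) are the same.
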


Fact~\ref{fact:vec} is \cite[Prop.~4.1]{trace2}\footnote{It follows by noting that if $\D$ is a division ring and $\Sa V$ is a $\D$-vector space then the collection of functions $v \mapsto \lambda v$, $\lambda \in \D$ witnesses local trace definability of $\Sa V$ in its underlying additive group.
}.
We say $\F$ is a {\bf prime field} if $\F$ is either $\Q$ or $\F_p$ for some prime $p$.
Let $\E$ be the prime subfield of $\F$.
By Fact~\ref{fact:vec} $\vvec_\F$ is trace equivalent to $\vvec_\E$.
Hence $D^\kappa(\vvec_\F)$ is trace equivalent to $D^\kappa(\vvec_\E)$ when $\kappa \ge |\F|$.

\medskip
Let $\F[x_i]_{i<\kappa}$ be the polynomial ring over $\F$ in $\kappa$ variables for any cardinal $\kappa\ge 1$.
An $\F[x_i]_{i<\kappa}$-module is just an $\F$-vector space equipped with $\kappa$ commuting endomorphisms.
We also let $\F\langle x_i\rangle_{i<\kappa}$ be the free $\F$-algebra on $\kappa$ generators, i.e. the polynomial ring in $\kappa$ non-commuting variables.
An $\F\langle x_i\rangle_{i < \kappa}$-module is just an $\F$-vector space equipped with $\kappa$ endomorphisms.
Eklof and Sabbagh showed that if $R$ is a coherent ring then the theory of $R$-modules has a model completion~\cite{EKLOF1971251}.
Now $\F[x_i]_{i<\kappa}$ is coherent by \cite[Thm.~1]{sabbagh-coherence}, hence the theory of $\F[x_i]_{i<\kappa}$-modules has a model completion which we denote by $\mathrm{CM}_{\kappa,\F}$.
Furthermore $\F\langle x_i\rangle_{i<\kappa}$ is also coherent~\cite[Cor.~2.2]{choo-lam-luft}, hence the theory of $\F\langle x_i\rangle_{i<\kappa}$-modules has a model completion $\mathrm{NCM}_{\kappa,\F}$.
Both $\mathrm{CM}_{\kappa,\F}$ and $\mathrm{NCM}_{\kappa,\F}$ admit quantifier elimination as they are model completions of universal theories and both theories are complete by Fact~\ref{fact:R-mod complete}.

\begin{proposition}\label{prop:dkaplin}
Let $\F$ be a field.
\begin{enumerate}[leftmargin=*]
\item If $\kappa \ge  \aleph_0$ then $\mathrm{CM}_{\kappa, \F}$ is trace equivalent to  $D^\kappa(\vvec_\F)$.
\item If $\kappa \ge 2$ then $\mathrm{NCM}_{\kappa, \F}$ is trace equivalent to $D^{\kappa + \aleph_0}(\vvec_\F)$.
\item If $\F$ is countable, $V$ is an infinite-dimensional $\F$-vector space, and $f$ is a linear surjection $V \to V^n$ for some $n \ge 2$ then the theory of $(V, f)$ is trace equivalent to $D^{\aleph_0}(\vvec_\F)$.
\end{enumerate}
\end{proposition}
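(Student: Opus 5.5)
The plan is to mimic the proof of Proposition~\ref{prop;triv}, with $\triv$ replaced by $\vvec_\F$, invoking Lemma~\ref{lem:dkap} for (1), Lemmas~\ref{lem:dkap} and~\ref{lem:two un} for (2), and Proposition~\ref{prop;ginsep} together with Proposition~\ref{prop:key}(4) for (3). In each of (1) and (2) we take $T = \vvec_\F$, viewed as a theory in the language of $\F$-vector spaces (which has quantifier elimination). We take $T^*_0$ to be the theory of $\F$-vector spaces with $\kappa$ commuting endomorphisms (resp.\ $\kappa$ arbitrary endomorphisms), and $T^*$ to be $\mathrm{CM}_{\kappa,\F}$ (resp.\ $\mathrm{NCM}_{\kappa,\F}$). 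Condition (1) of the lemmas should follow because $T^*$ is a model completion of $T^*_0$ with quantifier elimination. Hence $T^*$ is the model companion of $T^*_0$ relative to $T$, and every formula is a quantifier-free formula in $\F$-linear combinations of terms $t(x_j)$. This is exactly quantifier elimination relative to $\vvec_\F$, with the new unary functions being the endomorphisms and their compositions. One small point is that $T^*$ extends $\vvec_\F$, since models of $T^*$ are infinite: they are existentially closed modules.

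The key step is condition (2) of Lemma~\ref{lem:dkap}. Here independence over $\bigcup_i f_i(A)$ with respect to algebraic closure means linear independence of $A$ over the span $W$ of $\bigcup_i f_i(A)$. Given such $A\subseteq M$ and maps $f_i\colon A\to M$, we extend $A$ to a basis of $M$ modulo $W$ and let each $f^*_i$ be the linear map that agrees with $f_i$ on $A$ and is zero on $W$ and on the remaining basis vectors. For the commuting case we need $f^*_i\circ f^*_j=0$. This holds because the image of $f^*_j$ lies in $W$, and $f^*_i$ vanishes on $W$. Thus the $f^*_i$ commute, which gives (1) for infinite $\kappa$, and (2) for infinite $\kappa$ as well (there no commutation is needed).

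For (2) with finite $\kappa\ge2$ we use Lemma~\ref{lem:two un}. Given $\Sa M\models T^*_0$, an elementary extension $N$ of the underlying vector space, $b,b^*\in M$ and $a\in N\setminus M$, we choose a complement $C$ of $M+\F a$ in $N$. We define each endomorphism on $N$ to act as in $\Sa M$ on $M$, to be zero on $C$, and on $a$ to satisfy $f(a)=b$ and $g(a)=b^*$, with the other endomorphisms sending $a$ to $0$. Since $a\notin M$, this is a well-defined linear extension.

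For (3), $f$ is a definable surjection $V\to V^n$, so Proposition~\ref{prop;ginsep} gives that $\Th(V,f)$ is trace equivalent to $D^{\aleph_0}(\Th(V,f))$. By Proposition~\ref{prop:key}(4), whose cardinality hypotheses hold because $\F$ is countable, it then suffices to show that $(V,f)$ is locally trace equivalent to $\vvec_\F$. One direction is clear, since $(V,f)$ interprets its vector space reduct. For the other direction, $f=(f_1,\dots,f_n)$ with each $f_i$ an endomorphism, so $(V,f)$ is an $\F\langle x_1,\dots,x_n\rangle$-module. By Lemma~\ref{lem:c} (or via Proposition~\ref{prop:key}(1)), $\mathrm{NCM}_{n,\F}$ is locally trace equivalent to $\vvec_\F$. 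I would therefore try to show that $\Th(V,f)$ has quantifier elimination relative to $\vvec_\F$ in the language $\{f_1,\dots,f_n\}$, and then apply Fact~\ref{fact:relqe}.

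I expect this relative quantifier elimination to be the main obstacle. My first approach would be the J\'onsson--Tarski-style argument from Proposition~\ref{prop;triv}. Alternatively, I would use the Baur--Monk pp-elimination for modules. In that approach one must check that every pp-formula is equivalent to a linear condition on terms. For this I would use the surjectivity of $f$, which makes divisibility-type pp-conditions of the form $\exists y\, \sum_i t_i(y)=\cdots$ trivial or reducible to equations between terms. I would also use that $\ker f$ is infinite-dimensional when that is needed.
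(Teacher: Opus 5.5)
Your arguments for (1) and (2) are the paper's. Your explicit check of hypothesis (2) of Lemma~\ref{lem:two un} for finite $\kappa$ is also correct. The gap is in (3). The quantifier elimination relative to $\vvec_\F$ in the language $\{f_1,\ldots,f_n\}$ that you plan to prove is false in general. So neither a J\'onsson--Tarski-style argument nor a reduction of pp-formulas to equations between terms can succeed.

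Here is a counterexample. Let $V$ have basis $\{b\}\cup\{c_{m,k}:m,k\in\N\}$. Put $f_1(b)=b$, $f_1(c_{m,k})=c_{m,k-1}$ for $k\ge 1$, and $f_1(c_{m,0})=0$. Let $f_2$ kill $b$ and every $c_{m,k}$ with $k\ge 1$, and map $\{c_{m,0}:m\in\N\}$ bijectively onto the basis. Then $f=(f_1,f_2)\colon V\to V^2$ is a linear surjection. Indeed, $f_1$ maps the span of $b$ and the $c_{m,k}$ with $k\ge1$ isomorphically onto $V$, and $f_2$ kills that span. Likewise $f_2$ maps the span of the $c_{m,0}$ isomorphically onto $V$, and $f_1$ kills it.

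Now $f_1$ fixes $b$ and is locally nilpotent on the span $H$ of the $c_{m,k}$. Hence the pp-formula $\exists y\,(y-f_1(y)=x)$ defines $H$. On the other hand, suppose $r\in\F\langle x_1,x_2\rangle$ has degree $<k$. Then $r(c_{m,k})=0$ if and only if $r(b+c_{m,k})=0$, if and only if all coefficients of pure powers of $x_1$ in $r$ vanish. By compactness, some $c$ in an elementary extension satisfies $\exists y\,(y-f_1(y)=x)$ and has the same quantifier-free type as $b+c$. But $b+c$ does not satisfy the formula, since $b$ does not and the defined set is a subgroup. So this pp-formula is not equivalent to any boolean combination of equations $r(x)=0$.

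Your remark about $\mathrm{NCM}_{n,\F}$ does not help either. The structure $(V,f)$ need not be existentially closed. Embedding it into a model of $\mathrm{NCM}_{n,\F}$ yields trace definability only if $(V,f)$ itself has quantifier elimination.

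The missing ingredient is Proposition~\ref{prop:new F}(1), which is what the paper uses. It says every module over an $\F$-algebra is locally trace definable in $\vvec_\F$; in particular this applies to the $\F\langle x_1,\ldots,x_n\rangle$-module $(V,f)$. The idea is not to eliminate the existential quantifiers of pp-formulas but to absorb them into the witnessing functions. By Proposition~\ref{prop:abelai}, local trace definability is witnessed by the unary maps $V\to V^m/B$, $a\mapsto e_{m,i}(a)+B$, where $B$ ranges over the definable subgroups of the $V^m$. Each such quotient is a vector space over the prime field of $\F$; in characteristic zero this is Lemma~\ref{lem:fisher}. Hence each quotient is locally trace definable in $\vvec_\F$ by Fact~\ref{fact:vec}, and Fact~\ref{fact:disjoint union} finishes the argument. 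In the example above, the map $x\mapsto x+H\in V/H$ handles $H$. The surjectivity of $f$ plays no role in this step; it is needed only to apply Proposition~\ref{prop;ginsep}.
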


\begin{proof}
The proof of (3) is given after Proposition~\ref{prop:F}.
We prove (1).
Suppose $\kappa \ge \aleph_0$.
Let $V$ be an $\F$-vector space.
By Lemma~\ref{lem:dkap} it is enough to suppose $A \subseteq V$  and $(f_i)_{i < \kappa}$ is a collection of functions $A \to V$ such that $A$ is linearly independent over $\bigcup_{i < \kappa} f_i(A)$ and show that each $f_i$ extends to an endomorphism $f^*_i$ of $V$ such that the $f^*_i$ commute.
Let $W$ be the vector subspace of $V$ spanned by $\bigcup_{i < \kappa} f_i(A)$.
Let $A^*$ be a maximal subset of $V$ which contains $A$ and is independent over $W$.
Then for each $i < \kappa$ there is an endomorphism $f^*_i$ of $V$ which agrees with $f_i$ on $A$, takes $A^*$ to $W$, and vanishes on $W$.
Then we have $f^*_i \circ f^*_j = 0$ for any $i, j < \kappa$, so the $f^*_i$ trivially commute.

\medskip
The case of (2) when $\kappa$ is infinite follows from the proof of (1).
The case when $2 \le \kappa < \aleph_0$ follows by applying Lemma~\ref{lem:two un} with $T = \vvec_\F$, $T^* = \mathrm{NCM}_{\kappa, \F}$, and $T^*_0$ the theory of an $\F$-vector space equipped with $\kappa$ endomorphisms.
\end{proof}

Our next goal is to prove some things about trace definability in modules.
We first give a general result about abelian structures.
Let $\Sa A$ be an expansion of an abelian group $A$.
Then $\Sa A$ is an \textbf{abelian structure} if $\Sa A$ is interdefinable with an expansion of $A$ by some collection of subgroups of various $A^n$.

\begin{fact}\label{fact:abelian structure}
Let $\Sa A$ be an expansion of an abelian group $A$.
Then the following are equivalent.
\begin{enumerate}[leftmargin=*]
\item $\Sa A$ is one-based.
\item $\Sa A$ is an abelian structure.
\item Any definable subset of any $A^n$ is a boolean combination of cosets of  definable subgroups of $A^n$.
\end{enumerate}
\end{fact}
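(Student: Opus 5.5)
The plan is to prove the cycle of implications (2)$\Rightarrow$(3)$\Rightarrow$(2), then (2)$\Rightarrow$(1) and (1)$\Rightarrow$(3). Most of this is classical: (2)$\Rightarrow$(3) is Baur–Monk/Weispfenning pp-elimination, and (1)$\Rightarrow$(3) is the Hrushovski–Pillay theorem on one-based groups. I would cite those at the hard points rather than reprove them.

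For (2)$\Rightarrow$(3), suppose $\Sa A$ is interdefinable with an expansion of $A$ by a family of subgroups of various $A^n$.
\begin{itemize}
\item Call a formula \emph{pp} if it is built from these subgroup relations and equations between $\Z$-linear terms, using conjunction and existential quantification.
\item A parameter-free pp formula $\varphi(x)$ defines a subgroup of $A^n$, since projections and intersections of subgroups are subgroups.
\item A pp formula $\varphi(x,b)$ with parameters defines either the empty set or a coset of $\varphi(A,0)$.
\item The Baur–Monk argument, which works verbatim for abelian structures as for modules, shows every formula is equivalent to a boolean combination of pp formulas with parameters.
\end{itemize}
This gives (3). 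For (3)$\Rightarrow$(2), let $\Sa A'$ be the expansion of $A$ by every $\Sa A$-definable subgroup of every $A^n$. Each coset of such a subgroup is definable in $\Sa A'$ with parameters, so by (3) every $\Sa A$-definable set is $\Sa A'$-definable. Conversely $\Sa A'$ is clearly a reduct of $\Sa A$, so the two are interdefinable.

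For (2)$\Rightarrow$(1), pp-elimination first gives stability: a pp formula $\varphi(x,y)$ cannot have the order property, because its instances are cosets of one subgroup and hence are either equal or disjoint. Boolean combinations of stable formulas are stable, so $\Sa A$ is stable. For one-basedness I would use the standard description of types in abelian structures. In a saturated model, $\operatorname{tp}(a/B)$ is determined by the pp formulas it satisfies. The canonical base of $\operatorname{stp}(a/B)$ is interalgebraic with codes for the cosets $a + \varphi(\mathfrak{C},0)$, where $\varphi$ ranges over parameter-free pp formulas and $\varphi(\mathfrak{C},0)$ has bounded index in the relevant pp-definable group. Such a coset is coded by an element of $\operatorname{acl}^{eq}(a)$, so $\operatorname{Cb}(a/B) \subseteq \operatorname{acl}^{eq}(a)$, which is one-basedness.

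The main obstacle is (1)$\Rightarrow$(3), which is the Hrushovski–Pillay theorem. Work in a saturated elementary extension and use that one-based theories are stable. The route I would follow:
\begin{enumerate}
\item Show that every stationary type $p$ over a model in $A^n$ is a translate of the generic type of its connected stabilizer $\operatorname{Stab}(p)^0$. Take $a,b \models p$ independent. One-basedness gives $\operatorname{Cb}(p) \subseteq \operatorname{acl}^{eq}(a) \cap \operatorname{acl}^{eq}(b)$. One then shows $a-b$ is generic in $\operatorname{Stab}(p)$ and independent from $a$, which forces $p$ to be generic in a coset of $\operatorname{Stab}(p)^0$.
\item Deduce that for a definable $X \subseteq A^n$ and any type $p$, whether $X \in p$ depends only on which cosets of $\operatorname{acl}(\emptyset)$-definable connected subgroups contain $p$.
\item Conclude by compactness that $X$ is a boolean combination of such cosets, using that there are only boundedly many $\operatorname{acl}(\emptyset)$-definable subgroups. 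Neumann's lemma on coverings by cosets handles the finiteness bookkeeping.
\end{enumerate}
Parts 1 and 2 are where I expect the real difficulty. If time is short, I would simply cite Hrushovski–Pillay for this direction.
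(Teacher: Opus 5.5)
Your plan is correct and is essentially the paper's: the paper also gets (2)$\Rightarrow$(3) from pp-elimination for abelian structures (citing Wagner, Thm.~4.2.8), and treats (3)$\Rightarrow$(2) as clear via the same expansion by all definable subgroups. It likewise cites Hrushovski--Pillay for (1)$\Rightarrow$(3), and the only difference is that you prove the easy direction (2)$\Rightarrow$(1) by hand instead of also citing Hrushovski--Pillay for (3)$\Rightarrow$(1); that argument is fine, and is cleanest over a model $M$, where for pp $\varphi(x;y)$ the $\varphi$-definition of $\mathrm{tp}(a/M)$ is either empty or the fibre $\{y : \varphi(a,y)\}$, whose code lies in $\mathrm{dcl}^{\mathrm{eq}}(a)$.
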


\begin{proof}
Hrushovski and Pillay showed that (1) and (3) are equivalent~\cite{HP-weakly-normal}.
It is clear that (3) implies (2).
See \cite[Thm.~4.2.8]{Wagner_1997} for a proof that (2) implies (3).
\end{proof}

\begin{proposition}
\label{prop:abelai}
Let $\Sa A$ be an abelian structure.
Then $\Sa A$ is locally trace equivalent to the disjoint union  $\Sa D$  of all abelian groups $A^n/B$ for $n \ge 1$ and $B$ an $\Sa A$-definable subgroup of $A^n$.
\end{proposition}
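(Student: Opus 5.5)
We prove the two directions separately; the direction showing that $\Sa A$ locally trace defines $\Sa D$ is easy, and the real content is that $\Sa D$ locally trace defines $\Sa A$.

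\emph{$\Sa A$ locally trace defines $\Sa D$.} Each quotient $A^n/B$ with $B$ an $\Sa A$-definable subgroup is interpretable in $\Sa A$. By Fact~\ref{fact:interpret}, $\Sa A$ therefore trace defines each abelian group $A^n/B$, and in particular locally trace defines it. Fact~\ref{fact:disjoint union} then gives that $\Th(\Sa A)$ locally trace defines $\Sa D$.

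\emph{$\Sa D$ locally trace defines $\Sa A$.} We use Definition~\ref{def:loc trace}(3). Let $\Cal E$ be the family of quotient maps $\uptau_{B} \colon A \to A^n/B$ arising as follows. Take an $\Sa A$-definable subgroup $B \le A^n$ and an integer $n$, and consider the map $a \mapsto (a, \ldots, a) + B$. More generally, consider maps of the form $a \mapsto (0, \ldots, 0, a, 0, \ldots, 0) + B$ with $a$ placed in coordinate $j$; call this map $\uptau_{B,j}$.

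Now let $X \subseteq A^m$ be $\Sa A$-definable. By Fact~\ref{fact:abelian structure}, $X$ is a boolean combination of cosets $c + B$ of definable subgroups $B \le A^m$. It therefore suffices to handle a single coset $c + B$.

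The key observation is that, writing $\uptau_{B,j}$ for the embedding of the $j$th coordinate followed by the quotient map $A^m \to A^m/B$, we have
\[
(a_1, \ldots, a_m) \in c + B \quad \Longleftrightarrow \quad \uptau_{B,1}(a_1) + \cdots + \uptau_{B,m}(a_m) = c + B.
\]
The right-hand side is a relation definable in the abelian group $A^m/B$, using the parameter $c + B$, applied to the tuple $(\uptau_{B,1}(a_1), \ldots, \uptau_{B,m}(a_m))$. Since $A^m/B$ is a sort of $\Sa D$, this is $\Sa D$-definable.

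Boolean combinations are harmless. Since $\Sa D$ is a disjoint union, it defines finite products of sets definable in its sorts, so we may combine the finitely many witnessing functions for the different cosets and take the corresponding boolean combination of $\Sa D$-definable sets. Hence $\Cal E = \{\uptau_{B,j}\}$ witnesses local trace definability of $\Sa A$ in $\Sa D$.

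The main point needing care is the formalism: Definition~\ref{def:loc trace}(3) is stated for one-sorted targets, while $\Sa D$ is infinitely sorted. We therefore use the generalized multi-sorted version referenced before Fact~\ref{fact:disjoint union}, in which the witnessing functions may land in different sorts. With that version the verification above goes through directly, and no further input beyond Fact~\ref{fact:abelian structure} is needed.
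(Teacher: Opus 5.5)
Your proof is correct and matches the paper's argument. One direction holds because $\Sa A$ interprets $\Sa D$. For the other, the maps $\uptau_{B,j}$ (coordinate inclusion followed by the quotient $A^m \to A^m/B$), together with the identity $\uppi_B(a_1,\ldots,a_m) = \uptau_{B,1}(a_1)+\cdots+\uptau_{B,m}(a_m)$ and the reduction via Fact~\ref{fact:abelian structure} to boolean combinations of cosets, witness local trace definability of $\Sa A$ in $\Sa D$; the diagonal maps $a \mapsto (a,\ldots,a)+B$ you mention at first are never used and can be dropped.
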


\begin{proof}
It is clear that $\Sa A$ interprets $\Sa D$.
We show that $\Sa D$ locally trace defines $\Sa A$.
Given $n \ge 1$, a definable subgroup $B \subseteq A^n$, and $i \in \nset$ we let
\begin{enumerate}[leftmargin=*]
\item $\uppi_B$ be the quotient map $A^n \to A^n/B$,
\item $e_{n, i}$ be the map $A \to A^n$ taking $a \in A$ to the vector with $i$th coordinate $a$ and all other coordinates $0$,
\item and $\uptau_{B, i}$ be the map $A \to A^n/B$ given by composing $e_{n, i}$ and $\uppi_B$.
\end{enumerate}
Each $\uptau_{B, i}$ is a group morphism  and we have  $$\uppi_B(\alpha_1,\ldots,\alpha_n) = \uptau_{B, 1}(\alpha_1) + \cdots + \uptau_{B,n}(\alpha_n) \quad\text{for all} \quad \alpha_1,\ldots,\alpha_n\in A.$$
As $\Sa A$ is an abelian structure it follows that every $\Sa A$-definable subset of $A^n$ is a boolean combination of sets of the form
\[
\{(\alpha_1,\ldots,\alpha_n) \in A^n : \uptau_{B, 1}(\alpha_1) + \cdots + \uptau_{B,n}(\alpha_n) = \gamma \}
\]
for some definable subgroup $B \subseteq A^n$ and $\gamma \in A^n/B$.
Hence the collection of all $\uptau_{B, i}$ witnesses local trace definability of $\Sa A$ in $\Sa D$.
\end{proof}

We now give a lemma which will be used to understand definable subgroups in modules.

\begin{lemma}\label{lem:vect bool}
Let $V$ be a vector space over a characteristic zero field $\F$.
Then any subgroup of $V$ that is a boolean combination of affine subspaces of $V$ is a subspace.
\end{lemma}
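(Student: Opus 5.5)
Let $G \subseteq V$ be a subgroup that is a boolean combination of affine subspaces. Since $G$ is already a subgroup, it is a subspace exactly when it is closed under scalar multiplication by $\F$. So the plan is to show $G$ is closed under multiplication by elements of $\Q$, and then handle general scalars.

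\emph{Step 1: reduce to lines.} Being a boolean combination of affine subspaces passes to intersections with lines. Fix $v \in G$ and $\lambda \in \F$, and look at the line $\ell = \F v$, identified with $\F$ via $t \mapsto tv$. Each affine subspace $W$ of $V$ meets $\ell$ in the empty set, a single point, or all of $\ell$. Hence $G \cap \ell$, viewed as a subset of $\F$, is a boolean combination of finite sets and $\F$. So $H = \{ t \in \F : tv \in G \}$ is either finite or cofinite, and it is an additive subgroup of $\F$ containing $1$. It suffices to show $H = \F$, since then $\lambda v \in G$.

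\emph{Step 2: rule out the finite case.} Since $\F$ has characteristic zero, $\Z \subseteq H$ is infinite, so $H$ is not finite.

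\emph{Step 3: show a cofinite subgroup is everything.} Suppose $H$ is cofinite and $t \notin H$. Then the coset $t + H$ is disjoint from $H$. But $t+H$ is infinite, because $H$ is, which contradicts $\F \setminus H$ being finite. So $H = \F$.

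This finishes the argument. The step needing the most care is Step 1, namely the claim that a boolean combination of affine subspaces restricts on a line to a boolean combination of points and the whole line. This holds because intersecting with $\ell$ commutes with boolean operations, and an affine subspace meets a line in $\emptyset$, a point, or the line. Characteristic zero is used exactly in Step 2. In characteristic $p$ the statement fails: for example $\F_{p^2}$-subspaces viewed inside $\F_{p^2}$ show a finite subgroup need not be an $\F$-subspace.
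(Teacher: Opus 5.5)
Your proof is correct, and it takes a different and considerably shorter route than the paper's.

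\textbf{Your route.} You check closure under scalars one vector at a time. Restricting to the line $\F v$ through a point $v \in G$ turns the boolean combination into a finite or cofinite subset $H \subseteq \F$. After that you need only two facts: $\Z \cdot 1 \subseteq H$ is infinite in characteristic zero, and a cofinite subgroup of an infinite group is the whole group. One small point: $t \mapsto tv$ is a bijection onto $\ell$ only when $v \neq 0$. The case $v = 0$ is trivial, or is covered if you phrase Step 1 in terms of pullbacks rather than an identification.

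\textbf{The paper's route.} The paper argues globally.
\begin{enumerate}
\item It first shows the following, using Neumann's lemma and the fact that proper subspaces have infinite index in characteristic zero: removing finitely many cosets of proper subspaces from a vector space leaves a set $X$ with $X - X$ equal to the whole space.
\item It writes the subgroup as $\bigcup_i (B_i \setminus C_i)$, where $B_i$ is a coset of a subspace $W_i$ and $C_i$ is a finite union of cosets of subspaces of $W_i$.
\item Using the claim and the subgroup property, it shows the subgroup equals $\bigcup_i B_i$.
\item It applies Neumann's lemma again, together with torsion-freeness of $V/W_i$, to conclude that the subgroup equals one of the $W_i$.
\end{enumerate}

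\textbf{Comparison.} Your approach avoids Neumann's lemma and the normal form entirely. The paper's approach gives the extra information that the subgroup is literally one of the direction spaces occurring in the boolean combination. Its ingredients (Neumann's lemma, infinite index of proper subspaces, torsion-freeness of the quotients) are also purely group-theoretic and do not rely on one-dimensional slices.

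\textbf{Your closing remark.} The characteristic $p$ counterexample is garbled as written. The intended example is presumably $\F_p \subseteq \F_{p^2}$ with $V = \F = \F_{p^2}$. This is a finite subgroup, hence a finite union of points, and it is not an $\F_{p^2}$-subspace.
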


Lemma~\ref{lem:vect bool} is probably not original.
Thanks to Will Johnson for helping with the proof.

\begin{proof}
We first prove the following.
\begin{Claim*}
Suppose that $W_1, \ldots, W_n$ are proper subspaces of $V$, $C_i$ is a coset of $W_i$ for each $i \in \nset$, and let $X = V \setminus (C_1 \cup \cdots \cup C_n)$.
Then $V = X - X$.
\end{Claim*}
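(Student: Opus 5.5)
Given $v \in V$, I will find $x, y \in X$ with $x - y = v$. Equivalently, I need $y \in X \cap (X - v)$, i.e.\ some $y$ avoiding every $C_i$ and every $C_i - v$. So it suffices to show that
\[
V \neq \bigcup_{i=1}^n C_i \cup \bigcup_{i=1}^n (C_i - v).
\]
This is a finite union of $2n$ cosets of the proper subspaces $W_1, \ldots, W_n$.

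The key input is the standard fact that a vector space over an infinite field is not a finite union of cosets of proper subspaces. Since $\F$ has characteristic zero, it is infinite, so this applies.

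I would prove the fact directly by induction on the number $k$ of cosets. Suppose $V = D_1 \cup \cdots \cup D_k$ with each $D_j = c_j + U_j$ and each $U_j$ proper.
\begin{itemize}[leftmargin=*]
\item Pick $u \notin U_1$ and any base point $w$, and consider the affine line $\ell = \{w + t u : t \in \F\}$.
\item Each $D_j$ either contains $\ell$ or meets it in at most one point. The first case requires $u \in U_j$, which fails for $j = 1$.
\item Hence it suffices to choose $u$ outside every $U_j$. Then each $D_j$ meets $\ell$ in at most one point, while $\ell$ has $|\F| = \infty$ points, so $\ell \not\subseteq D_1 \cup \cdots \cup D_k$, a contradiction.
\end{itemize}

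A vector $u$ outside all the $U_j$ exists by the linear version of the same fact: $V$ is not a finite union of proper subspaces. The usual argument is as follows. Take $a \notin U_1$ and $b \notin U_2 \cup \cdots \cup U_k$ (by induction), and assume $b \in U_1$, since otherwise $b$ already works. Then consider $b + t a$ for $t \in \F$. No such vector lies in $U_1$. Each $U_j$ with $j \geq 2$ contains at most one of them, because two would force $a, b \in U_j$, contradicting $b \notin U_j$. Infinitely many $t$ then give the required vector.

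With the fact in hand, the claim is immediate. The main obstacle is only to keep the two levels of induction tidy: avoiding subspaces by a vector, then avoiding cosets along a line. No deeper difficulty arises, since characteristic zero is used only to guarantee that $\F$ is infinite. (In finite characteristic the claim can fail, as with $\F_2$ and complements of cosets.)
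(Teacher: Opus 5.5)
Your proof is correct. Its first step is the paper's: $v \in X - X$ if and only if $X \cap (X - v) \neq \varnothing$, so everything reduces to showing that the $2n$ cosets $C_i$ and $C_i - v$ of the proper subspaces $W_i$ do not cover $V$.

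You differ in how you rule out that covering. The paper argues by contradiction and cites Neumann's lemma: if a group is a finite union of cosets of subgroups, then one of those subgroups has finite index. It then notes that a proper $W_i$ cannot have finite index, since $V/W_i$ is a nonzero $\F$-vector space. You instead prove non-covering directly. First you find $u$ outside every $W_i$ via the $b + ta$ argument. Then you observe that each coset meets the infinite line $w + \F u$ in at most one point.

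The paper's route is shorter, granting Neumann's lemma as a black box, and is purely group-theoretic. Yours is self-contained and elementary. It also makes plain that the claim only uses that $\F$ is infinite; the paper's appeal to characteristic zero at this step really only needs $V/W_i$ to be infinite. Characteristic zero is genuinely needed only later in Lemma~\ref{lem:vect bool}, for torsion-freeness of $V/W_i$.

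One small slip: in the $b + ta$ step, $t = 0$ gives $b \in U_1$, so you should take $t \neq 0$. Since $\F \setminus \{0\}$ is still infinite, the conclusion is unaffected.
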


\begin{claimproof}
Suppose otherwise and fix $v \in V \setminus (X - X)$.
Then $X \cap (X + v) = \varnothing$, and it follows that $C_1, \ldots, C_n, C_1 + v, \ldots, C_n + v$ cover $V$.
Hence $V$ is covered by finitely many cosets of the $W_i$.
By Neumann's lemma~\cite[Lemma~4.2.1]{Hodges} some $W_i$ has finite index in $V$.
Finally $V/W_i$ is an $\F$-vector space, so $V = W_i$ as $\F$ is characteristic zero, contradiction.
\end{claimproof}

Now suppose that $H$ is a subgroup of $V$ which is a boolean combination of subspaces.
Then $H = (B_1 \setminus C_1) \cup \cdots \cup (B_n \setminus C_n)$ where each $B_i$ is a coset of a subspace $W_i$ of $V$, each $C_i$ is a finite union of cosets of subspaces of $W_i$, and each $B_i \setminus C_i$ is nonempty.
We first show that $H = B_1 \cup \cdots \cup B_n$.
It suffices to show that $H$ contains each $B_i$.
For each $i$ let $D_i = B_i \setminus C_i$ and fix $w_i \in D_i$.
Fix $i$.
As $H$ is a subgroup $(D_i - w_i) - (D_i - w_i) \subseteq H$.
Now $B_i - w_i = W_i$, so by the claim $(D_i - w_i) - (D_i - w_i) = W_i$.
Finally $B_i \subseteq H$ as $w_i \in H$.

\medskip
Now each $w_i$ is in $H$, hence each $W_i$ is a subgroup of $H$.
As $H$ is a finite union of cosets of the $W_i$ another application of Neumann's lemma shows that some $W_i$ has finite index in $H$.
Fix $i$ with this property.
Finally, $H/W_i$ is a finite subgroup of $V/W_i$ and $V/W_i$ is torsion free, hence $H/W_i$ is trivial and so $H = W_i$.
\end{proof}

\begin{lemma}\label{lem:fisher}
Let $\F$ be a field, $V$ be an $\F$-vector space, and $\Sa V$ be an expansion of $V$ by subspaces of the $V^n$.
Then every $\Sa V$-definable set is a boolean combination of affine subspaces.
If $\F$ has characteristic zero then any $\Sa V$-definable subgroup of any $V^n$ is a subspace.
\end{lemma}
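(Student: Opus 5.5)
The plan is to deduce both claims from Fact~\ref{fact:abelian structure} and Lemma~\ref{lem:vect bool}.

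\emph{First claim.} Note that $\Sa V$ is an expansion of the abelian group $V$ by subgroups of the $V^n$. The scalar multiplications $v \mapsto \lambda v$ are themselves definable via the subgroups $\{(v,\lambda v)\}$, and the given subspaces are subgroups. So $\Sa V$ is an abelian structure. By Fact~\ref{fact:abelian structure}, every $\Sa V$-definable subset of $V^n$ is a boolean combination of cosets of $\Sa V$-definable subgroups of $V^n$. It remains to see that these cosets can be replaced by affine subspaces.

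I would proceed by induction on formulas, in the style of Baur--Monk. Work with the expansion of the $\F$-vector space $V$ by the given subspaces; this is a module over the ring $\F$, with extra predicates that are submodules. The relevant pp-formulas are existential projections of finite conjunctions of linear equations with $\F$-coefficients and atomic subspace predicates. Each such formula defines an $\F$-subspace of $V^n$, since a projection of an intersection of subspaces is a subspace. Baur--Monk quantifier elimination for abelian structures then gives that every definable set is a boolean combination of cosets of pp-definable subgroups. Here the pp-definable subgroups are exactly the sets defined by these pp-formulas in the language with scalars, so they are subspaces and their cosets are affine subspaces. This is the main point to get right: Fact~\ref{fact:abelian structure}(3) only mentions definable subgroups, so I would cite the standard pp-elimination (as in the proof of (2)$\Rightarrow$(3) in \cite{Wagner_1997}), observing that the pp-formulas involved are $\F$-linear.

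\emph{Second claim.} Assume $\mathrm{char}\,\F = 0$ and let $H \subseteq V^n$ be a $\Sa V$-definable subgroup. Now $V^n$ is an $\F$-vector space, and affine subspaces of $V^n$ in the product sense are affine subspaces of the vector space $V^n$. By the first claim, $H$ is a boolean combination of affine subspaces of $V^n$. Lemma~\ref{lem:vect bool}, applied to the vector space $V^n$, then shows that $H$ is a subspace.
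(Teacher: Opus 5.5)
Your proof is correct and follows essentially the same route as the paper: pp-elimination, the observation that pp-definable subgroups are $\F$-subspaces (so their cosets are affine subspaces), and then Lemma~\ref{lem:vect bool} applied to the vector space $V^n$. The only difference is in how pp-elimination is reached. The paper first passes to an expansion of $\Sa V$ by endomorphisms of $V$, writing each subspace of $V^n$ as a finite intersection of kernels of maps $(v_1,\ldots,v_n)\mapsto f_1(v_1)+\cdots+f_n(v_n)$, so that it can quote pp-elimination for modules over an $\F$-algebra. You instead apply Baur--Monk directly to the relational abelian structure and check by hand that intersections and projections of subspaces are subspaces.
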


\begin{proof}
By Lemma~\ref{lem:vect bool} it is enough to prove the first claim.
Any subspace of $V^n$ is a finite intersection of kernels of linear maps $V^n \to V$.
Furthermore any linear map $V^n \to V$ is of the form $(v_1, \ldots, v_n) \mapsto f_1(v_1) + \cdots + f_n(v_n)$ where each $f_i$ is a linear map $V \to V$.
Hence we may suppose that $\Sa V$ is an expansion of $V$ by linear maps $V \to V$.
Equivalently we may suppose that $\Sa V$ is a module over an $\F$-algebra.
By the pp-quantifier elimination for modules any $\Sa V$-definable subset of $V^n$ is a boolean combination of cosets of pp-definable subgroups~\cite[Cor.~2.13]{Prest_modules}.
Finally, any pp-definable subgroup is a vector subspace~\cite[Lemma~2.1]{Prest_modules}.
\end{proof}

\begin{proposition}\label{prop:new F}
Let $\F$ be a field, $V$ be an infinite $\F$-vector space, and $\Sa V$ be an expansion of $V$ by subspaces of the $V^n$.
Then $\Sa V$ is locally trace definable in $\vvec_\F$.
Furthermore:
\begin{enumerate}[leftmargin=*]
\item Any module over an $\F$-algebra is locally trace definable in $\vvec_\F$.
\item If $\F$ has positive characteristic then any one-based expansion of an $\F$-vector space is locally trace definable in $\vvec_\F$.
\end{enumerate}
\end{proposition}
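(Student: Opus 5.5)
The plan is to reduce everything to the abelian-structure machinery of Proposition~\ref{prop:abelai}.

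\textbf{Main claim.} Let $\Sa V$ be an expansion of $V$ by subspaces of the $V^n$. Then $\Sa V$ is an abelian structure, so by Proposition~\ref{prop:abelai} it is locally trace equivalent to the disjoint union $\Sa D$ of the groups $V^n/B$, with $B$ ranging over $\Sa V$-definable subgroups.

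We want each summand to be an $\F$-vector space. Lemma~\ref{lem:fisher} gives this directly in characteristic zero: every definable subgroup $B$ is a subspace. In characteristic $p$, every subgroup of $V^n$ is an $\F_p$-subspace, so we work over the prime field $\E$ instead. More simply, we can run the proof of Proposition~\ref{prop:abelai} using only the pp-definable subgroups. By the proof of Lemma~\ref{lem:fisher}, every $\Sa V$-definable set is a boolean combination of cosets of pp-definable subgroups, and these are $\F$-subspaces. So the maps $\uptau_{B,i}$, with $B$ ranging over definable subspaces, witness local trace definability of $\Sa V$ in the disjoint union $\Sa D'$ of the vector spaces $V^n/B$.

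Each $V^n/B$ is an $\F$-vector space, possibly finite (for instance $0$). An infinite one is a model of $\vvec_\F$. A finite one is interpretable in, hence trace definable in, any structure with at least two points, so it is certainly definable in a model of $\vvec_\F$. By Fact~\ref{fact:disjoint union}, $\vvec_\F$ then locally trace defines $\Sa D'$. By transitivity, $\vvec_\F$ locally trace defines $\Sa V$.

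One point needs care: the quotients $V^n/B$ carry only their group structure in $\Sa D$. Should we regard them as abelian groups or as vector spaces? Both are locally trace definable in $\vvec_\F$, since the vector space reduct suffices.

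\textbf{Part (1).} Let $\Sa M$ be a module over an $\F$-algebra $R$. Its underlying group is an $\F$-vector space via the embedding $\F \to R$, and each $\lambda_r$ is $\F$-linear because $\F$ is central in $R$. Graphs of $\F$-linear maps $V \to V$ are subspaces of $V^2$, so $\Sa M$ is interdefinable with an expansion of $V$ by subspaces. If $V$ is infinite, the main claim applies. If $V$ is finite, the module is trivially locally trace definable.

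\textbf{Part (2).} Let $\F$ have characteristic $p$ and let $\Sa V$ be a one-based expansion of an $\F$-vector space. By Fact~\ref{fact:abelian structure}, $\Sa V$ is an abelian structure. Every subgroup of $V^n$ is an $\F_p$-subspace, so $\Sa V$ is an expansion of the $\F_p$-vector space $V$ by $\F_p$-subspaces. The main claim, applied with $\F_p$, shows $\Sa V$ is locally trace definable in $\vvec_{\F_p}$. By Fact~\ref{fact:vec}, $\vvec_{\F_p}$ is locally trace equivalent to $\vvec_\F$, which finishes part (2).

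\textbf{Main obstacle.} The expected difficulty is the main claim in characteristic zero. Definable subgroups that are not subspaces must be ruled out, otherwise the quotients would be arbitrary abelian groups rather than vector spaces; this is exactly why Lemma~\ref{lem:fisher} is needed. The rest is bookkeeping with disjoint unions and the handling of finite quotients.
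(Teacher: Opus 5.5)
Your proposal is correct and follows the paper's argument: Proposition~\ref{prop:abelai} with Fact~\ref{fact:disjoint union}, Lemma~\ref{lem:fisher} to make the quotients $\F$-vector spaces in characteristic zero, the observation that subgroups are $\F_p$-subspaces together with Fact~\ref{fact:vec} in characteristic $p$, and modules viewed as expansions by graphs of $\F$-linear maps for (1). The only organizational difference is that you derive (2) from the main claim, whereas the paper proves (2) first and gets the characteristic-$p$ case of the main claim from it. Your ``more simply'' variant also works, and it bypasses Lemma~\ref{lem:vect bool} altogether, provided $B$ ranges over the pp-definable subspaces of the module expansion from the proof of Lemma~\ref{lem:fisher} rather than over $\Sa V$-definable subgroups; this is harmless because the witnessing maps need not be definable.
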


\begin{proof}
We first prove (2).
By Fact~\ref{fact:vec} it suffices to treat the case $\F = \F_p$.
Let $\Sa W$ be a one-based expansion of an $\F_p$-vector space $W$.
By Proposition~\ref{prop:abelai} and Fact~\ref{fact:disjoint union} it is enough to show that $W^n/B$ is locally trace definable in $\Th(W)$ for any definable subgroup $B$ of $W^n$.
Any subgroup of an $\F_p$-vector space is a subspace, hence $W^n/B$ is also an $\F_p$-vector space, so $W^n/B$ is either finite or elementarily equivalent to $W$.

\medskip
Note that (1) follows from the first claim as any module over an $\F$-algebra is interdefinable with an $\F$-vector space $W$ equipped with a family of subspaces of $W^2$.
We finally prove the first claim.
By (2) we may suppose that $\F$ is characteristic zero.
As in (2) it suffices to show that any definable subgroup of $V^n$ is a subspace.
This follows by Lemma~\ref{lem:fisher}.
\end{proof}

\begin{proposition}\label{prop:F}
Let $\F$ be a prime field and $\Sa O$ be an arbitrary structure.
Then the following are equivalent.
\begin{enumerate}[leftmargin=*]
\item $\Sa O$ is locally trace definable in $\vvec_\F$.
\item $\Sa O$ is trace definable in an $R$-module for some $\F$-algebra $R$.
\item $\Sa O$ is locally trace definable in an $R$-module for some $\F$-algebra $R$.
\end{enumerate}
Furthermore if $\F = \F_p$ then the following are equivalent.
\begin{enumerate}[leftmargin=*]\setcounter{enumi}{3}
\item $\Sa O$ is locally trace definable in $\vvec_\F$.
\item $\Sa O$ is trace definable in a one-based expansion of an $\F$-vector space.
\item $\Sa O$ is locally trace definable in a one-based expansion of an $\F$-vector space.
\end{enumerate}
\end{proposition}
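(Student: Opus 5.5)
The plan is to go around the cycle (1) $\Rightarrow$ (2) $\Rightarrow$ (3) $\Rightarrow$ (1), and then run the same cycle for (4)–(6).

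\textbf{(3) $\Rightarrow$ (1).} This follows from Proposition~\ref{prop:new F}(1): any $R$-module over an $\F$-algebra is locally trace definable in $\vvec_\F$. Local trace definability is transitive, so anything locally trace definable in such a module is locally trace definable in $\vvec_\F$.

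\textbf{(2) $\Rightarrow$ (3).} Trace definability implies local trace definability, so this is immediate.

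\textbf{(1) $\Rightarrow$ (2).} This is the main step. Suppose $\Sa O$ is locally trace definable in $\vvec_\F$. Choose $\kappa$ with $\kappa \ge |\Th(\Sa O)|$. By Proposition~\ref{prop:key}(2), $\Sa O$ is trace definable in $D^\kappa(\vvec_\F)$. Since $\kappa \ge \aleph_0$, Proposition~\ref{prop:dkaplin}(1) shows that $D^\kappa(\vvec_\F)$ is trace equivalent to $\mathrm{CM}_{\kappa,\F}$. The latter is the theory of modules over the $\F$-algebra $\F[x_i]_{i<\kappa}$. So $\Sa O$ is trace definable in a model of $\mathrm{CM}_{\kappa,\F}$, which is an $\F[x_i]_{i<\kappa}$-module.

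One point needs checking here. Trace definability in a theory means trace definability in some model of it, and every model of $\mathrm{CM}_{\kappa,\F}$ is a genuine $\F[x_i]_{i<\kappa}$-module. So no further work is needed. Prime fields are countable, so $|\F|\le\kappa$ is automatic; in any case the argument does not use this.

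\textbf{The case $\F=\F_p$.} For (4) $\Rightarrow$ (5), the argument is the same: an $\F_p[x_i]_{i<\kappa}$-module is a one-based expansion of an $\F_p$-vector space. To see this, note it is an expansion of its additive group by endomorphisms. Their graphs are subgroups of $A^2$, so it is an abelian structure and hence one-based by Fact~\ref{fact:abelian structure}. The additive group of a characteristic $p$ ring module is an $\F_p$-vector space, and this gives (5). The step (5) $\Rightarrow$ (6) is trivial. For (6) $\Rightarrow$ (4), combine Proposition~\ref{prop:new F}(2) with transitivity.

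\textbf{Main obstacle.} I expect the only delicate point to be in (1) $\Rightarrow$ (2): the passage from trace definability in the theory $D^\kappa(\vvec_\F)$ to trace definability in an actual module. This goes through the trace equivalence with $\mathrm{CM}_{\kappa,\F}$ together with transitivity of trace definability. Note that $\kappa$ must be infinite for Proposition~\ref{prop:dkaplin}(1) to apply, which is why we take $\kappa \ge \aleph_0 + |\Th(\Sa O)|$.
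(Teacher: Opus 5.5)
Your proposal is correct and follows essentially the same route as the paper: the same cycle of implications. You get (1)$\Rightarrow$(2) by passing through $D^\kappa(\vvec_\F)$ via Proposition~\ref{prop:key}(2) and its trace equivalence with $\mathrm{CM}_{\kappa,\F}$ (Proposition~\ref{prop:dkaplin}(1)), (3)$\Rightarrow$(1) and (6)$\Rightarrow$(4) from Proposition~\ref{prop:new F} with transitivity, and (4)$\Rightarrow$(5) from modules being one-based, spelling out details (the choice of infinite $\kappa \ge |\Th(\Sa O)|$, why a module is an abelian structure) that the paper leaves implicit.
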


\begin{proof}
Proposition~\ref{prop:dkaplin} shows that (1) implies (2), (2) clearly implies (3), and Proposition~\ref{prop:new F}(1) shows that (3) implies (1).
Furthermore (4) implies (5) as any module is one-based, (5) clearly implies (6), and (6) implies (4) by Proposition~\ref{prop:new F}(2).
\end{proof}

We now prove Proposition~\ref{prop:dkaplin}(3).

\begin{proof}
Let $\F$ be a countable field, $V$ be an infinite-dimensional $\F$-vector space, and $f$ be a linear surjection $V \to V^n$ for some $n \ge 2$.
Proposition~\ref{prop:new F} shows that $(V, f)$ is locally trace equivalent to $V$.
By Proposition~\ref{prop;ginsep} $\Th(V, f)$ is trace equivalent to $D^{\aleph_0}(\Th(V, f))$.
Hence $\Th(V, f)$ is trace equivalent to $D^{\aleph_0}(\vvec_\F)$ by Proposition~\ref{prop:key}(4). 
\end{proof}




We now consider local trace definability in abelian groups.

\begin{proposition}
\label{prop;oneb}
The following are equivalent for any structure $\Sa O$ in a language of cardinality at most $\kappa$.
\begin{enumerate}[leftmargin=*]
\item $\Sa O$ is locally trace definable in an abelian group.
\item $\Sa O$ is trace definable in a module.
\item $\Sa O$ is trace definable in a $\Z[x_i]_{i<\kappa}$-module.
\item $\Sa O$ is trace definable in an abelian structure.
\item $\Sa O$ is locally trace definable in an abelian structure.
\end{enumerate}
\end{proposition}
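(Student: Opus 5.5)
I would prove the cycle (1) $\Rightarrow$ (3) $\Rightarrow$ (2) $\Rightarrow$ (4) $\Rightarrow$ (5) $\Rightarrow$ (1). The middle implications are immediate: a $\Z[x_i]_{i<\kappa}$-module is a module, which gives (3) $\Rightarrow$ (2). Any module is an abelian structure, since its definable sets are boolean combinations of cosets of pp-definable subgroups, which gives (2) $\Rightarrow$ (4). Trace definability implies local trace definability, which gives (4) $\Rightarrow$ (5).

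For (5) $\Rightarrow$ (1), suppose $\Sa A$ is an abelian structure that locally trace defines $\Sa O$. By Proposition~\ref{prop:abelai}, $\Sa A$ is locally trace equivalent to the disjoint union $\Sa D$ of the groups $A^n/B$, with $B$ ranging over the definable subgroups. Each $A^n/B$ is a pure abelian group. I claim $\Sa D$ is locally trace definable in a single abelian group $G$. Fact~\ref{fact:disjoint union} reduces this to finding one theory of abelian groups that locally trace defines every $A^n/B$. For this I would take $G$ to be the direct sum of sufficiently saturated models of the theories of all the $A^n/B$. Each $A^n/B$ is elementarily equivalent to a direct summand of such a group, and a direct summand $H$ of $G$ is interpretable in $G$: if $G = H \oplus K$, the pure group $H$ is trace definable in $G$ via the inclusion, because every subgroup of $H^m$ that is pp-definable in $H$ is the trace of the corresponding pp-definable subgroup of $G^m$. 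This last point is where I would be most careful. The safest route is to note that pp formulas commute with direct sums, so by pp-elimination every $H$-definable set is the trace of a $G$-definable set. Transitivity then gives (1).

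The main step is (1) $\Rightarrow$ (3). Suppose an abelian group $A$ locally trace defines $\Sa O$, and $|L(\Sa O)| \le \kappa$. By Proposition~\ref{prop:key}(2), applied with $T = \Th(A)$, $\Sa O$ is trace definable in $D^\kappa(\Th(A))$, so it suffices to show that some $\Z[x_i]_{i<\kappa}$-module trace defines $D^\kappa(\Th(A))$. I would imitate the proof of Proposition~\ref{prop:dkaplin}(1), applying Lemma~\ref{lem;g} directly. Take a saturated $A$ and let $V = A^{(\kappa)} \oplus A$, so that the $A^{(\kappa)}$ part serves as $X$. Let $f_i$ be the endomorphism of $V$ sending the $i$th summand of $A^{(\kappa)}$ identically onto the last copy of $A$, and killing everything else, including the last copy of $A$. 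Then $f_i \circ f_j = 0$ for all $i,j$, so the $f_i$ commute and $V$ is a $\Z[x_i]_{i<\kappa}$-module. The last copy of $A$ is the image of $f_0$, hence definable, and the module trace defines the pure group on it by the direct summand argument above. Finally, for any $b_1,\ldots,b_k \in A$ and distinct $i_1,\ldots,i_k$, the element of $A^{(\kappa)}$ with $b_j$ in coordinate $i_j$ is sent to $b_j$ by $f_{i_j}$. This is exactly the hypothesis of Lemma~\ref{lem;g}, so the theory of the module trace defines $D^\kappa(\Th(A))$.

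The expected obstacle is verifying that the induced structure on the definable copy of $A$ (respectively on the direct summands in (5) $\Rightarrow$ (1)) is no richer than required, and that it is at least the pure group structure. If that is shaky, one can instead use that Lemma~\ref{lem;g} only needs the module to trace define $A$, and the pure group $A$ embeds into the module.
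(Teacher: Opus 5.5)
Your proof is correct. It follows the paper for (5)$\Rightarrow$(1) and fills in, by a more direct route, the step (1)$\Rightarrow$(3) that the paper leaves to the reader.

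For (5)$\Rightarrow$(1) you use the paper's skeleton: Proposition~\ref{prop:abelai}, Fact~\ref{fact:disjoint union}, a direct sum, and the fact that an abelian group trace defines each direct summand via the inclusion. The paper cites that last fact from \cite{trace2}, where it is proved by quantifier elimination after adding divisibility predicates, together with Fact~\ref{fact:trace embedd}. You instead prove it from Baur--Monk pp-elimination and the fact that pp formulas commute with direct sums, which is a sound alternative.

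For (1)$\Rightarrow$(3) the paper only says that a variation of the proof of Proposition~\ref{prop:dkaplin} works. That proof goes through Lemma~\ref{lem:dkap}, hence through a relative model companion with relative quantifier elimination. You keep the two essential ingredients, the square-zero endomorphisms $f_if_j=0$ and Lemma~\ref{lem;g}, but replace the model companion by the explicit module $A^{(\kappa)}\oplus A$. This is cleaner and loses nothing. Statement (3) only asks for some $\Z[x_i]_{i<\kappa}$-module, and the converse trace definability of that module in $D^\kappa(\Th(A))$ is never needed.

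A few statements should be corrected, though none of them is load-bearing.
\begin{enumerate}
\item A direct summand need not be interpretable, or even definable, in the ambient group. For example, $\Z\oplus 0$ is not definable in $(\Z^2;+)$, whose definable subsets are boolean combinations of points and cosets of the subgroups $n\Z^2$. What you need, and what your pp argument actually proves, is trace definability via the inclusion.
\item In (1)$\Rightarrow$(3), $A^{(\kappa)}\oplus 0$ is not definable in $(V;+,(f_i)_{i<\kappa})$. Given finitely many parameters, pick a summand $j$ of $A^{(\kappa)}$ on which they all vanish, and let $h$ map that summand identically to the last copy and kill everything else. Then $v\mapsto v+h(v)$ is an automorphism commuting with every $f_i$ that fixes the parameters but moves $A^{(\kappa)}\oplus 0$. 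So take $X=V$; it satisfies the hypothesis of Lemma~\ref{lem;g} equally well.
\item The last copy $f_0(V)$ is definable, so the pure group on it is outright definable and no direct-summand argument is needed there. Extra induced structure on it is harmless, since Lemma~\ref{lem;g} only needs the module to trace define the pure group. Your ``expected obstacle'' therefore does not arise.
\item Saturation of $A$ is unnecessary, because Lemma~\ref{lem;g} passes to a saturated extension itself.
\item As in the paper, $\kappa$ must be infinite for Proposition~\ref{prop:key}(2) to apply.
\end{enumerate}
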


\begin{proof}
It is clear that (3) implies (2), (2) implies (4), and (4) implies (5).
A variation of the proof of Proposition~\ref{prop:dkaplin} shows that (1) implies (3), we leave the details of this to the reader.
It remains to show that (5) implies (1).
It is enough to show that any abelian structure is locally trace definable in an abelian group.
By Proposition~\ref{prop:abelai} and Fact~\ref{fact:disjoint union} it is enough to show that for any family $(A_i)_{i \in I}$ of abelian groups there is an abelian group $B$ which trace defines each $A_i$.
Let $B$ be the direct sum of the $A_i$ and apply the fact that an abelian group trace defines any summand\footnote{This is proven by applying Fact~\ref{fact:trace embedd} and the classical fact that abelian groups admit quantifier elimination after unary relations defining the set of $k$th multiples for all $k \ge 1$ are added to the language.} \cite[Lemma~4.9]{trace2}.
\end{proof}

We finally apply Lemma~\ref{lem:one un} to the theory $\doag$ of $(\R;+,<,0,1)$.
We consider a theory introduced by Block Gorman, Caulfield, and Hieronymi~\cite{patho}.
Let $\Q(t)$ be the field of rational functions over $\Q$ in the variable $t$.
Furthermore let $T_\mathrm{vd}$ be the theory of structures of the form $\Sa M=\left(M; +, <, (x\mapsto qx)_{q\in\Q(t)}, 0, 1 \right)$ satisfying the following.
\begin{enumerate}[leftmargin=*]
\item $(M;+,<,0,1)\models\doag$.
\item $\left(M;+,(x\mapsto qx)_{q\in\Q(t)}\right)$ is a $\Q(t)$-vector space.
\item If $q_1,\ldots,q_m \in \Q(t)$ are $\Q$-linearly independent then $\{ (q_1 a,\ldots,q_m a) : a \in M \}$ is dense in $M^m$.
\end{enumerate}
Note that such $\Sa M$ is interdefinable with $(M;+,<,x\mapsto tx)$.
Informally, $T_\mathrm{vd}$ is the theory of a divisible ordered abelian group $(M;+,<)$ equipped with a generic $\Q$-linear bijection $M\to M$.
By \cite[Prop.~3.7 and Thm.~3.14]{patho} $T_\mathrm{vd}$ admits quantifier elimination and is $\nip$.
Note that $T_\mathrm{vd}$ is not complete.

\begin{proposition}\label{prop;vd}
Every completion of $T_\mathrm{vd}$ is trace equivalent to $D^{\aleph_0}(\doag)$.
\end{proposition}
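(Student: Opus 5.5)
We will apply Lemma~\ref{lem:one un} with $T = \doag$, equipped with the order topology as its definable topology, $\kappa = 1$, and $T^*$ a completion of $T_\mathrm{vd}$, regarded in the language $L^* = \{+,<,0,1, f\}$ where $f$ is the unary function $x \mapsto tx$. Since every model of $T_\mathrm{vd}$ is interdefinable with its reduct $(M;+,<,x\mapsto tx)$, it is harmless to work in $L^*$. The conclusion of Lemma~\ref{lem:one un} is then that $T^*$ is trace equivalent to $D^{1 + \aleph_0}(\doag) = D^{\aleph_0}(\doag)$, which is what we want. It remains to check the three hypotheses.

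For hypothesis (1), quantifier elimination of $T_\mathrm{vd}$ in its original language means every formula is equivalent to a quantifier-free one built from terms $\sum_j q_j x_j$ with $q_j \in \Q(t)$. Clearing denominators, an atomic condition $\sum q_j x_j = 0$ or $>0$ becomes, after multiplying by a product of polynomials, a condition on $\Q$-linear combinations of terms $f^{(k)}(x_j)$ and the inverses of polynomial maps. The cleaner route is to note that $x \mapsto p(t)x$ for $p \in \Q[t]$ is a $\Q$-linear combination of iterates of $f$, and that $y = q(t)x$ with $q = p/r$ is equivalent to $r(t)y = p(t)x$. Atomic formulas involve terms $q(t)x$ with rational $q$, however, so one has to handle the inverses carefully. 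We will therefore instead take $L^*$ to be the expansion of $L$ by the unary functions $x \mapsto qx$ for all $q \in \Q(t)$. This gives countably many functions, so $\kappa = \aleph_0$ and $\kappa + \aleph_0 = \aleph_0$. With this choice (1) is immediate from quantifier elimination, since each $\Q(t)$-linear term is a $\doag$-term in the unary terms $q x_j$.

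For hypothesis (2), the first half asks that for distinct $q_1,\ldots,q_n \in \Q(t)$ the set $\{(q_1a,\ldots,q_na)\}$ is dense. This fails if the $q_i$ are merely distinct rather than $\Q$-linearly independent. To fix this, we pass to a subfamily: choose a $\Q$-basis $(q_i)_{i<\omega}$ of $\Q(t)$ and let $L^*$ consist of the maps $x\mapsto q_i x$ only. Every map $x \mapsto qx$ is then a $\Q$-linear combination of these, which is $\doag$-definable in the $q_i x$, so relative quantifier elimination still holds. Density for distinct basis elements is then axiom (3) of $T_\mathrm{vd}$. The second half asks that $\{(a, q_i a, q_i^2 a,\ldots,q_i^m a)\}$ is dense. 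Since $t$ is transcendental, each nonconstant $q_i$ is not algebraic over $\Q$, so $1, q_i, \ldots, q_i^m$ are $\Q$-linearly independent and axiom (3) applies. We choose the basis to avoid constants, for example by replacing $1$ with $1+t$, and adjoin the identity map harmlessly.

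Hypothesis (3) is the main obstacle. Take $\Sa N \models T^*$ that is $\aleph_1$-saturated, and let $V$ be the convex hull of $\Q$, or more simply the convex set of elements infinitesimally close to $0$ relative to some fixed positive $\epsilon$. The plan is to let $\mathfrak{m}$ be the convex subgroup of elements smaller in absolute value than every positive rational, let $V$ be the convex subgroup of finite elements, and let $E$ be congruence modulo $\mathfrak{m}$ on $V$, extended to $N$ by making $N\setminus V$ a single class. These are convex, hence definable in $\Sh N$, and $E$-classes in $V$ are open because $\mathfrak m$ is open in the order topology, being a convex set with nonempty interior in a dense order. Then $V/\mathfrak{m}\cong \R$ as an ordered group, and the induced structure from $\Sh N$ on $V/E$ expands $(\R;+,<,0,1)\models\doag$. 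The key point to verify is interdefinability: the induced structure contains $+$ and $<$, which are the images of the definable sum and order, so it is an expansion of a model of $\doag$ up to interdefinability. This completes the verification, and Lemma~\ref{lem:one un} applies.
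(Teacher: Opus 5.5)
Your argument is correct. It ends exactly where the paper's does: the same $V$ and $\mfrak$, congruence modulo $\mfrak$, and $V/E\cong\R$ carrying an expansion of $(\R;+,<,0,1)$. The difference is the family of functions you feed into Lemma~\ref{lem:one un}.

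The paper takes $\kappa=1$ with the single map $x\mapsto tx$. It gets hypothesis (2) from the $\Q$-linear independence of $1,t,\ldots,t^m$ and cites quantifier elimination for $T_\mathrm{vd}$ for hypothesis (1). It therefore uses only the finite-$\kappa$ (iterate) case of the lemma's proof.

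You instead take $\kappa=\aleph_0$ maps $x\mapsto q_ix$, for a $\Q$-basis of $\Q(t)$ consisting of non-constant elements. Then (1) holds literally, and the first half of (2) is exactly axiom (3) of $T_\mathrm{vd}$.

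Your choice buys precision in (1). With only the forward iterates of $x\mapsto tx$, relative quantifier elimination in the strict sense of the paper's definition fails. Indeed, density of $\{(t^{-1}a,a,ta,\ldots,t^ma):a\in M\}$ shows that $\{a:t^{-1}a>0\}$ is not of the form $\vartheta(a,ta,\ldots,t^ma)$ with $\vartheta$ a $\doag$-formula. This is harmless in the paper: (1) is only used to get trace definability in $D^{\aleph_0}(\doag)$, and that follows directly from quantifier elimination in the full $\Q(t)$-language. The paper's route is shorter and needs just one function.

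Three small fixes:
\begin{enumerate}
\item Do not adjoin the identity as an extra function symbol. Together with $x\mapsto tx$ and $x\mapsto(1+t)x$ it would break the first density condition, and it is unnecessary since the variables already occur in $\vartheta$.
\item $\mfrak$ is open because it is a nontrivial convex subgroup, not merely because it is convex with nonempty interior. It is nontrivial by $\aleph_1$-saturation, which is also what makes $V/\mfrak$ all of $\R$.
\item State the choice $\kappa=\aleph_0$ with the basis from the outset; your opening $\kappa=1$ framing is abandoned later.
\end{enumerate}
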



\begin{proof}
Let $\Sa M = (M; +, <, x \mapsto tx, 0, 1) \models T_\mathrm{vd}$ be $\aleph_1$-saturated.
We apply Lemma~\ref{lem:one un} with $T = \doag$ and $T^* = \Th(\Sa M)$.
Note that (1) follows by quantifier elimination for $T_\mathrm{vd}$.
Furthermore (2) follows as $1, t, ,\ldots,t^m$ are linearly independent over $\Q$ and hence $\{(a, ta, t^2 a,\ldots,t^m a) : a \in M\}$ is dense in $M^{m + 1}$ for each $m \ge 1$.
Let $V$ be the convex hull of $\Z$ in $M$, let $\mfrak$ be the set of $a \in M$ such that $|a| < 1/n$ for all $n \ge 1$, and let $E$ be the equivalence relation on $M$ given by declaring $E(a,a')$ if and only if $a - a' \in \mfrak$.
Both $V$ and $\mfrak$ are convex and hence definable in $(M;+,<)^\mathrm{Sh}$.
Furthermore $V/E$ is canonically identified with $\R$ and the structure induced on $\R$ by $(M;+,<)^\mathrm{Sh}$ is an expansion of $\rgoup$.
\end{proof}

\section{Fields and expansions of fields}
We first consider non-perfect fields.

\begin{proposition}
\label{prop;insep}
Suppose that $\Sa K$ is an expansion of a non-perfect field $K$.
Then $\Th(\Sa K)$ is trace equivalent to $D^{\aleph_0}(\Th(\Sa K))$.
\end{proposition}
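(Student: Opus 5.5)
The plan is to reduce directly to Proposition~\ref{prop;ginsep}: it suffices to exhibit a $\Sa K$-definable injection $K^k \to K$ for some $k \ge 2$. A non-perfect field $K$ has characteristic $p > 0$ and $K^p \ne K$, so $[K : K^p] \ge p \ge 2$. The natural candidate for the injection comes from the $K^p$-linear structure of $K$.

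Fix $t \in K \setminus K^p$. The elements $1, t, \ldots, t^{p-1}$ are linearly independent over the subfield $K^p$. Indeed, $t$ is a root of $x^p - t^p$ over $K^p$, and this polynomial equals $(x - t)^p$ over $K$. Hence the minimal polynomial of $t$ over $K^p$ is $(x-t)^d$ for some $d \le p$, and since it must have coefficients in $K^p$ and $t \notin K^p$, we get $d = p$. This is the standard fact that $K^p(t)$ has degree $p$ over $K^p$.

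Now define $f \colon K^2 \to K$ by $f(a, b) = a^p + t b^p$. This map is definable in the field language with the parameter $t$, so it is definable in $\Sa K$. It is injective: if $a^p + t b^p = a'^p + t b'^p$, then $(a - a')^p + t(b - b')^p = 0$. Linear independence of $1, t$ over $K^p$ forces $(a-a')^p = (b-b')^p = 0$. Since Frobenius is injective, $a = a'$ and $b = b'$.

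Applying Proposition~\ref{prop;ginsep} with $k = 2$ then gives that $\Th(\Sa K)$ is trace equivalent to $D^{\aleph_0}(\Th(\Sa K))$. If desired, one could use all $p$ powers of $t$ to get an injection $K^p \to K$, but $k = 2$ suffices.

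There is no real obstacle; the only point requiring care is the independence of $1$ and $t$ over $K^p$, which follows from $t \notin K^p$ as shown above.
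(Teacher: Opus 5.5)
Your proposal is correct and is essentially the paper's argument: the paper likewise observes that for $\lambda \in K \setminus K^p$ the map $(a,b) \mapsto a^p + \lambda b^p$ is a definable injection $K^2 \to K$ and then applies Proposition~\ref{prop;ginsep}. The minimal-polynomial computation is more than you need, since the linear independence of $1, t$ over $K^p$ follows directly: if $\alpha + t\beta = 0$ with $\alpha, \beta \in K^p$ and $\beta \neq 0$, then $t = -\alpha/\beta \in K^p$.
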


If $K$ has characteristic $p$ and $\lambda \in K$ is not a $p$th power then the map $K^2\to K$ given by $(a,b)\mapsto a^p+\lambda b^p$ is a definable injection.
So Proposition~\ref{prop;insep} follows from Proposition~\ref{prop;ginsep}.

\medskip
Given a prime $p$ and a natural number $e \ge 1$ we let $\mathrm{SCF}_{p,e}$ be the theory of separably closed fields $K$ of characteristic $p$ such that $K$ has degree $p^e$ over the subfield of $p$th powers.
We also let $\acvfp$ be the theory of an algebraically closed field of characteristic $p$ equipped with a non-trivial valuation and let $\scvfpe$ be the theory of a model of $\scfpe$ equipped with a non-trivial valuation, see \cite{scvhf} for background on $\scvfpe$.

\begin{proposition}
\label{prop;seppp}
Fix a prime $p$ and $e\in\N_{\ge 1}$.
Then $\mathrm{SCF}_{p,e}$ is trace equivalent to $D^{\aleph_0}(\acf_p)$ and $\scvfpe$ is trace equivalent to $D^{\aleph_0}(\acvfp)$.
\end{proposition}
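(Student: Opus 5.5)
The plan is to combine Proposition~\ref{prop;insep} with Proposition~\ref{prop:key}(4). A model $K$ of $\scfpe$ is a non-perfect field, since $e \ge 1$. Proposition~\ref{prop;insep} therefore shows that $\scfpe$ is trace equivalent to $D^{\aleph_0}(\scfpe)$. In the same way, the underlying field of a model of $\scvfpe$ is non-perfect, so $\scvfpe$ is trace equivalent to $D^{\aleph_0}(\scvfpe)$. All the theories involved are countable. By Proposition~\ref{prop:key}(4), with $\kappa = \aleph_0$, it then suffices to show that $\scfpe$ is locally trace equivalent to $\acf_p$ and that $\scvfpe$ is locally trace equivalent to $\acvfp$. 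Indeed, this gives $D^{\aleph_0}(\scfpe)$ trace equivalent to $D^{\aleph_0}(\acf_p)$, and similarly in the valued case.

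One direction of each local equivalence is easy. If $K \models \scfpe$ then $K^{\mathrm{alg}} = K^{p^{-\infty}}$ is interpretable in $K$: each element of $K^{p^{-n}}$ is coded by its $p^n$th power in $K$, and compatibility between levels is handled uniformly, as in the standard interpretation of the perfect hull. Alternatively, I would simply find an algebraically closed field interpretable in an elementary extension, for example the perfect core $\bigcap_n K^{p^n}$ in a saturated model, which is type-definable. Since this is only type-definable, the interpretation via coordinates in $K$ itself is the cleaner route. For $\scvfpe$ the valuation extends uniquely to the perfect hull, so $\acvfp$ is again trace definable, hence locally trace definable.

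For the other direction, that $\acf_p$ locally trace defines $\scfpe$, I would use the $\lambda$-functions. Fix a $p$-basis $b_1,\ldots,b_e$ of $K$. Then $\scfpe$ in the language with the $p$-basis constants and the $\lambda$-functions $\lambda_{n,\mathbf{i}}$ eliminates quantifiers relative to the field structure. More precisely, every formula is equivalent to a field formula applied to iterated $\lambda$-terms, and each iterated $\lambda$-term of a variable is itself a single $\lambda$-function of that variable at some level. Embed $K$ into $K^{\mathrm{alg}}$. The countably many unary maps $a \mapsto \lambda_{n,\mathbf{i}}(a)$ then witness, via Definition~\ref{def:loc trace}(3), local trace definability of $K$ in $K^{\mathrm{alg}} \models \acf_p$, by the argument of Fact~\ref{fact:relqe}. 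The one point to check is that atomic field formulas of $\scfpe$, now with terms mixing $\lambda$'s, reduce to polynomial conditions on $\lambda$-values of single variables. This follows because $a = \sum \lambda_{n,\mathbf{i}}(a)^{p^n} m_{\mathbf{i}}$, so any polynomial in $a_1,\ldots,a_k$ equals a polynomial over the $b_j$ in the $\lambda$-coordinates, and $\lambda$ of such a polynomial is again a polynomial in finer coordinates.

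The valued case runs in parallel, using the relative quantifier elimination for $\scvfpe$ in the language with $\lambda$-functions from \cite{scvhf}; the definable sets then come from $\acvfp$-definable sets of $\lambda$-coordinates. I expect the main obstacle to be exactly this step: pinning down the precise form of relative quantifier elimination for $\scvfpe$, namely that every formula is equivalent to an $\acvfp$-formula evaluated at $\lambda$-coordinates in the algebraic closure. If that is not directly available, I would use the fact that $K$ is dense in $K^{\mathrm{alg}}$ and that $\scvfpe$ is the theory of the induced structure, deducing the required form from the quantifier elimination in \cite{scvhf}.
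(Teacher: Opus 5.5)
Your reduction is the paper's: Proposition~\ref{prop;insep} plus Proposition~\ref{prop:key}(4) with $\kappa = \aleph_0$. So is your $\lambda$-function argument that $\kalg$ locally trace defines $K$; the paper cites \cite[Cor.~4.3]{messmer_marker_messmer_pillay_2017} for the term reduction you do by hand, and \cite[Lemma~2.10]{scvhf} for the valued analogue you flag as the main obstacle.

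The genuine gap is in the direction you call easy. The perfect hull $K^{p^{-\infty}} = \bigcup_n K^{p^{-n}}$ is not interpretable in $K$. Coding $a^{p^{-n}}$ by $a \in K$ works for one fixed $n$ at a time. An interpretation would have to handle all $n \in \N$ uniformly, and that is not first-order. In fact no infinite algebraically closed field is interpretable in a model of $\scfpe$: by a theorem of Messmer, every infinite field interpretable in such a $K$ is definably isomorphic to a finite extension of $K$, which is not perfect. Your valued-case argument rests on the same interpretation. Moreover, the fact that $v$ extends uniquely \emph{upward} to $\kalg$ does nothing toward making $(K,v)$ trace define a model of $\acvfp$.

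The missing idea is that no definability of the algebraically closed field is needed. Since $\acf_p$ has quantifier elimination, Fact~\ref{fact:trace embedd} shows that $K$ trace defines any algebraically closed subfield via the inclusion, however undefinable that subfield is. So the perfect core $F = \bigcap_n K^{p^n}$ of an $\aleph_1$-saturated model, which you mention and then discard, is exactly the right object; its being merely type-definable is irrelevant. This is the paper's argument. To see that $F$ is algebraically closed: it is perfect, and any $\alpha \in K$ separable over $F$ lies in $F(\alpha^{p^n}) \subseteq K^{p^n}$ for every $n$, so $F$ is also separably closed.

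In the valued case you must additionally check that $v|_F$ is nontrivial, so that $(F, v|_F) \models \acvfp$. Then quantifier elimination for $\acvfp$ and Fact~\ref{fact:trace embedd}, applied to the inclusion $(F, v|_F) \hookrightarrow (K, v)$, finish the argument. The paper gets nontriviality from density of $F$ in the $v$-topology \cite[Prop.~2.8]{scvhf}. It also follows from saturation: the type $\{x \neq 0,\ v(x) > 0\} \cup \{x \in K^{p^n} : n \ge 1\}$ is finitely satisfied by $a^{p^N}$ for any $a$ with $v(a) > 0$.
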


\begin{proof}
Proposition~\ref{prop;insep} shows that $\mathrm{SCF}_{p,e}$ is trace equivalent to $D^{\aleph_0}(\mathrm{SCF}_{p,e})$ and $\scvfpe$ is trace equivalent to $D^{\aleph_0}(\scvfpe)$.
Hence by Proposition~\ref{prop:key}(4) it is enough to show that $\mathrm{SCF}_{p,e}$ is locally trace equivalent to $\acf_p$ and $\scvfpe$ is locally trace equivalent to $\acvfp$.
Fix an $\aleph_1$-saturated model $(K, v)$ of $\scvfpe$, so $K \models \scfpe$.
Let $F$ be the subfield of $K$ consisting of elements which are $p^n$th powers for every $n \ge 1$.
Then $F \models \acf_p$, hence $F$ admits quantifier elimination, and so $K$ trace defines $F$ by Fact~\ref{fact:trace embedd}.
By \cite[Prop.~2.8]{scvhf} $F$ is dense in the $v$-topology on $K$, hence $(F, v) \models \acvfp$.
It follows that $(K, v)$ trace defines $(F, v)$ by quantifier elimination for $\acvfp$.
Now let $\kalg$ be the algebraic closure of $K$.
We show that $\kalg$ locally trace defines $K$
By \cite[Cor.~4.3]{messmer_marker_messmer_pillay_2017} there is a countable collection $\Cal E$ of functions $K \to K$ such that every formula in $K$ in the variables $x_1 ,\ldots, x_n$ is equivalent to a boolean combination of equalities between terms of the form $f(x_1, \ldots, x_n, h_1(x_{i_1}), \ldots, h_m(x_{i_m}))$ for $f$ a polynomial in $K[y_1, \ldots, y_{n + m}]$, $h_1,\ldots, h_m \in \Cal E$, and $i_1, \ldots, i_m \in \nset$.
Consider each $h \in \Cal E$ to be a function $K \to \kalg$.
Now $\Cal E$ and the inclusion $K \to \kalg$ witness local trace definability of $K$ in $\kalg$.
Finally, let $w$ be some valuation on $\kalg$ extending $v$.
A similar argument applying \cite[Lemma~2.10]{scvhf} shows that $(\kalg, w)$ locally trace defines $(K, v)$.
\end{proof}

We give field-theoretic analogues of Propositions~\ref{prop:dkaplin} and \ref{prop;vd}.
Given $\kappa\ge 1$ let $\dcf^\kappa$ be the model companion of the theory of a characteristic zero field equipped with $\kappa$ commuting derivations.
We also let $\ncdf^\kappa$ be the model companion of the theory of a characteristic zero field equipped with $\kappa$ (possibly non-commuting) derivations.
In particular $\dcf^1 = \ncdf^1$ is the theory of differentially closed fields of characteristic zero.
(Of course $\dcf^1$ is usually referred to as $\dcf_0$.)
We also let $\mathrm{CODF}^\kappa$ be the model companion of the theory of an ordered field equipped with $\kappa$ commuting derivations.

\begin{proposition}\label{prop:dcf}
\hspace{.00000000000000000000000000000000000000000000000000000000000001cm}
\begin{enumerate}[leftmargin=*]
\item If $\kappa \ge \aleph_0$ then $\dcf^\kappa$ is trace equivalent to $D^\kappa(\acf_0)$.
\item If $\kappa \ge 2$ then $\ncdf^\kappa$ is trace equivalent to $D^{\kappa + \aleph_0}(\acf_0)$.
\item If $\kappa \ge 1$ then $\mathrm{CODF}^\kappa$ is trace equivalent to $D^{\kappa + \aleph_0}(\rcf)$.
\end{enumerate}
\end{proposition}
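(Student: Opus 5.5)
The plan is to apply the three general criteria of Section~\ref{section:dkap}, exactly as was done for modules in Proposition~\ref{prop:dkaplin} and for $T_\mathrm{vd}$ in Proposition~\ref{prop;vd}. In all three cases the first ingredient is quantifier elimination relative to the base theory. For $\dcf^\kappa$ and $\ncdf^\kappa$ I would use the known quantifier elimination in the language of differential rings. This gives that every formula is equivalent to a field-theoretic formula applied to differential monomials $\der_{i_1}\cdots\der_{i_k}(x_j)$. Since the iterated derivations are unary terms, the theory has quantifier elimination relative to $\acf_0$ in the expanded language with $\kappa+\aleph_0$ unary functions, namely all finite composites of the derivations. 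For $\mathrm{CODF}^\kappa$, the analogous quantifier elimination relative to $\rcf$ is known (Singer for one derivation, and Rivi\`ere/Fornasiero--Kaplan for several commuting ones). Fact~\ref{fact:relqe} and Proposition~\ref{prop:key}(1) then give that each theory is trace definable in the corresponding $D^{\kappa+\aleph_0}$.

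For the converse in (1), I will apply Lemma~\ref{lem:dkap} with $T=\acf_0$ and $T^*_0$ the theory of fields with $\kappa$ commuting derivations. Working with the language consisting of the $\kappa$ derivations is enough: since $\kappa$ is infinite, the composites add only $\kappa$ functions, and hypothesis (1) of the lemma holds in this smaller language because the composites are terms. The key step is the extension property (2) of the lemma. Given $A\subseteq K\models\acf_0$ that is algebraically independent over $\bigcup_i f_i(A)$, I need commuting derivations $\der_i$ on $K$ with $\der_i|_A=f_i$. The construction I would try is to let $k$ be the algebraic closure of $\bigcup_i f_i(A)$ and put the zero derivations on $k$. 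I then extend $A$ to a transcendence basis $B$ of $K$ over $k$, set $\der_i(a)=f_i(a)$ for $a\in A$ and $\der_i(b)=0$ for $b\in B\setminus A$, and extend uniquely to $K$, which is algebraic over $k(B)$. Commutation is checked on generators: $\der_j\der_i(a)=\der_j(f_i(a))=0$ since $f_i(a)\in k$, and similarly in the other order. The bracket $[\der_i,\der_j]$ is a derivation vanishing on $k\cup B$, hence it is zero. This is the analogue of the ``compositions vanish'' trick of Proposition~\ref{prop:dkaplin}.

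For (2), the case of infinite $\kappa$ follows from the proof of (1), since commuting derivations are in particular derivations. For finite $\kappa\ge 2$ I would apply Lemma~\ref{lem:two un} with two derivations $\der_0,\der_1$. Given $K\models T^*_0$, an elementary extension $N$ of its field reduct, $a\in N\setminus K$, and $b,b^*\in K$, I need derivations on $N$ extending those on $K$ with $\der_0 a=b$ and $\der_1 a=b^*$. If $a$ is transcendental over $K$, I extend to $K(a)$ freely and then to $N$ by standard extension of derivations. The obstacle is when $a$ is algebraic over $K$, since then $\der_i a$ is forced. My plan is to exploit the fact that the elementary extension is arbitrary: because $K$ is algebraically closed, $K$ is relatively algebraically closed in $N$, so $a\notin K$ forces $a$ to be transcendental over $K$. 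This settles the case. I expect this step to be the easiest of the three.

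For (3), I would use Lemma~\ref{lem:one un} with $T=\rcf$ and the order topology. Hypothesis (1) is the relative quantifier elimination above. Hypothesis (2) is density of $\{(\der_{i_1}a,\ldots,\der_{i_n}a)\}$ and of jets $(a,\der a,\ldots,\der^{(m)}a)$. This should follow from the geometric axiomatization of $\mathrm{CODF}^\kappa$, in which any differential system with a nonempty open set of solutions for its algebraic prolongation has a differential solution in that open set. Hypothesis (3) is handled as in Proposition~\ref{prop;vd}. I take an $\aleph_1$-saturated model, let $V$ be the convex hull of $\Z$, and let $E$ be infinitesimal closeness. Then $V/E\cong\R$, and the structure induced by the Shelah completion expands $(\R;+,\times)\models\rcf$ because the standard part map respects field operations. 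The main obstacle I anticipate is verifying hypothesis (2) precisely from the published axioms of $\mathrm{CODF}^\kappa$ in the several-derivation case.
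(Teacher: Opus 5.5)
Your proposal is correct and follows essentially the paper's route; the paper deduces the statement from Theorem~\ref{thm;tcf}, using Lemma~\ref{lem:dkap} with derivations vanishing on the algebraic closure of $\bigcup_i f_i(A)$ and on the rest of a transcendence basis for (1), Lemma~\ref{lem:two un} for finite $\kappa$ in (2), and Lemma~\ref{lem:one un} for (3), except that in (3) the paper uses a tame pair and Fact~\ref{fact:tame pair} (needed for general o-minimal $T$) instead of your convex hull of $\Z$, and simply cites \cite[Lemma~5.5]{Fornasiero_2020} for the density hypothesis you flag. Your commutator argument in (1) is the right one and is genuinely needed: unlike the linear case of Proposition~\ref{prop:dkaplin}, the composites $\der_i\circ\der_j$ need not vanish (e.g.\ $\der_i\der_j(a^2)=2f_i(a)f_j(a)$ for $a\in A$), whereas $[\der_i,\der_j]$ is a derivation vanishing on $k\cup B$ and hence is zero.
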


As in Proposition~\ref{prop;triv} the bounds on $\kappa$ are sharp as $\dcf^\kappa$ is totally transcendental when $\kappa$ is finite.
Proposition~\ref{prop:dcf} follows by the same argument as the previous propositions and some standard facts about derivations.
However, we prove Proposition~\ref{prop:dcf} as a special case of a more general result in terms of generalizations of these theories that we first recall.

\medskip
Let $L$ be a language expanding the language of rings and $T$ be an $L$-theory expanding the theory of fields.
Then $T$ is {\bf algebraically bounded} if for every $\Sa K \models T$ and definable $X \subseteq K^m \times K$ there are polynomials $f_1,\ldots,f_n \in K[x_1,\ldots,x_m,t]$ such that if $\alpha\in K^m$ and $X_\alpha = \{\beta \in K : (\alpha,\beta) \in X \}$ is finite then $X_\alpha\subseteq\{ \beta \in K : f_i(\alpha,\beta) = 0 \}$ for some $i \in \nset$ such that $f_i(\alpha, x) \in K[x]$ is not constant zero.
Note that an algebraically bounded theory is geometric.
An expansion of a field is algebraically bounded when its theory is.
Examples of algebraically bounded structures include algebraically closed fields, real closed fields, and characteristic zero fields equipped with non-trivial henselian valuations~\cite{lou-dimension}.
In particular any characteristic zero henselian field is bounded.
(Recall that a field is henselian if it admits a non-trivial henselian valuation.)

\medskip
We also consider derivations on o-minimal structures.
Let $L$ be a language expanding the language of ordered fields, $T$ be an o-minimal $L$-theory expanding $\rcf$, and $\Sa R$ be a model of $T$.
Recall that a derivation $\der\colon R\to R$ is {\bf $T$-compatible} if 
$$\der(f(a_1,\ldots,a_n)) = \sum_{i=1}^n \der(a_i)\frac{\partial f}{\partial x_i}(a_1,\ldots,a_n)$$
for any parameter-free definable $C^1$-function $f\colon R^n\to R$ and $(a_1,\ldots,a_n)\in R^n$.

\begin{fact}\label{fact:comb der}
Let $L$ be a language expanding the language of rings and $T$ be an $L$-theory expanding the theory of characteristic zero fields.
Fix $\kappa\ge 1$ and let $L_\der$ be the expansion of $L$ by unary functions $\der_i$ for $i<\kappa$.
\begin{enumerate}[leftmargin=*]
\item If $T$ is algebraically bounded then the $L_\der$-theory of a model of $T$ equipped with $\kappa$ commuting derivations has a model companion relative to $T$ which we call $\dcf^\kappa_T$.
Furthermore $\dcf^\kappa_T$ is complete and admits quantifier elimination relative to $T$.
\item If $T$ is algebraically bounded then the $L_\der$-theory of a model of $T$ equipped with $\kappa$ (possibly non-commuting) derivations has a model companion relative to $T$ which we call $\ncdf^\kappa_T$.
Furthermore $\ncdf^\kappa_T$ is complete and admits quantifier elimination relative to $T$. 
\item If $T$ is o-minimal then the $L_\der$-theory of a model of $T$ equipped with $\kappa$ commuting $T$-compatible derivations has a model companion relative to $T$ which we call $\dcf^\kappa_T$.
Furthermore $\dcf^\kappa_T$ admits quantifier elimination relative to $T$.
\end{enumerate}
\end{fact}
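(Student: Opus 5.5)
The plan is to follow the standard route from the literature on generic derivations over geometric and algebraically bounded fields, due to Fornasiero and Terzo in the commuting and non-commuting algebraically bounded cases and to Fornasiero and Kaplan in the o-minimal case. We first give geometric axioms in the style of Pierce and Pillay, then prove quantifier elimination relative to $T$ by a back-and-forth argument, and finally read off completeness. Since the statement is a Fact, I would reduce to these sources wherever possible and only sketch the arguments below.

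\textbf{Axioms.} Algebraic boundedness makes $\operatorname{acl}$ coincide with field-theoretic algebraic closure over the field generated in the $L$-structure. In particular $T$ is geometric, and dimension of definable sets is definable in families. For (2) I would write down axioms saying the following: for every $L$-definable $X \subseteq K^{n} \times K^{n\kappa'}$ of the right dimension, where $\kappa'$ ranges over finitely many of the $\der_i$ and the fibres of the projection are \emph{large}, there is $a \in K^n$ with $(a, (\der_i a)_i) \in X$. For (1) the axioms must also respect commutation. Here I would use the Kolchin-style formulation with $\der$-jets: $X$ lives in a space of mixed partial derivatives indexed by monomials of bounded order, and the condition is required only for $X$ compatible with the integrability conditions. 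For (3) the same is done with $T$-compatible derivations. The chain rule then forces the derivative of a definable point to be computed by the gradient, and o-minimal cell decomposition plays the role of algebraic boundedness. Uniform definability of dimension makes each family of axioms first-order.

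\textbf{Relative model companion.} Given $(\Sa K, \dik)$ a model of the base theory, we realize such a generic point in an elementary $L$-extension. We take a point $a$ of $X$ that is $\operatorname{acl}$-generic over $K$ and extend each $\der_i$ to $K(a)$ and then to the $\operatorname{acl}$-closure, by prescribing $\der_i a$ from the fibre. Algebraic boundedness (or $T$-compatibility together with the o-minimal implicit function theorem) shows that derivations extend uniquely to algebraic closure. Iterating in a chain gives an embedding into a model of the axioms with elementary $L$-part.

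\textbf{Relative QE.} This is then a back-and-forth between $\aleph_1$-saturated models. Let $A$ be a substructure closed under the $\der_i$ and under $\operatorname{acl}$, and let $b$ be an element. Then $b$ is either differentially algebraic over $A$, so that its type is determined by the $L$-type of finitely many derivatives, or differentially transcendental, so that all its derivatives are $\operatorname{acl}$-independent. In both cases the axioms let us find a matching element on the other side. Hence every formula is equivalent to an $L$-formula applied to $L_\der$-terms.

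\textbf{Completeness.} For (1) and (2), relative QE reduces every $L_\der$-sentence to an $L$-sentence in closed terms. Since $T$ is complete, the relative model companion is complete. For (3) completeness is not claimed, so we stop at relative QE.

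The main obstacle is the commuting case (1), together with (3). There the axioms must encode integrability, and the extension step must produce commuting extensions. I would handle this by working with Ritt–Kolchin characteristic sets and ranked monomials in the $\der_i$, checking that Rosenfeld's lemma transfers to the algebraically bounded and $T$-compatible settings.
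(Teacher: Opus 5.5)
\textbf{The gap: infinitely many derivations.} The theorems of Fornasiero--Terzo and Fornasiero--Kaplan that you plan to reduce to are proved only for finitely many derivations. The statement, however, is for every cardinal $\kappa\ge 1$, and it is used later exactly for infinite $\kappa$: Lemma~\ref{lem:dkap} requires $\kappa\ge\aleph_0$, and Theorem~\ref{thm;tcf}(1) concerns $\kappa\ge\aleph_0$. This passage is the one non-citational step in the paper's proof, and your proposal never isolates it.

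The paper defines $\dcf^\kappa_T$ (likewise $\ncdf^\kappa_T$) as the $L_\der$-theory whose reduct to $L\cup\{\der_{i_1},\ldots,\der_{i_m}\}$ is, after relabeling, $\dcf^m_T$ for all distinct $i_1,\ldots,i_m<\kappa$. This presupposes that the finite theories are compatible under restriction to a subset of the derivations. Since any $L_\der$-formula mentions only finitely many $\der_i$, relative quantifier elimination, relative model completeness and completeness then transfer from the finite case. Your axiom scheme only ever involves finitely many $\der_i$, so it in effect describes this same glued theory. What you still need to say is that this is how the cited finite results give the properties for arbitrary $\kappa$.

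\textbf{Your direct sketch does not replace this reduction.} Its back-and-forth step rests on the dichotomy that $b$ is either differentially algebraic over $A$ (type determined by finitely many derivatives) or differentially transcendental. That holds for one derivation but fails for two. An element $b$ with $\der_1 b=0$ and $b,\der_2 b,\der_2^{(2)}b,\ldots$ $\operatorname{acl}$-independent over $A$ is neither, and $\kappa\ge 2$ is precisely where (1) and (2) are used. The finite-$\kappa$ proofs instead use saturation, compactness and the axiom scheme to realize the $L$-type over $A$ of finite truncations of the full jet $(\theta b)_\theta$. Here $\theta$ ranges over the (commutative or non-commutative) monomials in the $\der_i$.

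There is a second problem with infinitely many commuting derivations. Your extension step must also extend the derivations that do not occur in a given axiom to the new elements, while preserving commutation, and you do not address this. It is the same compatibility that the gluing relies on. For a single derivation your outline is the standard one.
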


We also let $\dcf_T = \dcf^1_T$.
We have used $\dcf_T^\kappa$ to denote two different theories.
One can apply cell decomposition to show that any algebraically bounded o-minimal expansion of an ordered field is interdefinable with the underlying field.
Furthermore if $R$ is a real closed field then any derivation $R \to R$ is $T$-compatible \cite[Prop.~2.8]{Fornasiero_2020}, so our two definitions agree when they overlap.
Note that $\dcf^\kappa$ is $\dcf^\kappa_T$ for $T = \acf_0$ and $\mathrm{CODF}^\kappa$ is $\dcf^\kappa_T$ for $T = \rcf$.

\medskip
Here (1) and (2) are due to Fornasiero and Terzo~\cite[Thm.~5.12 and Thm~4.5]{antongiulio2024} and (3) is due to Fornasiero and Kaplan~\cite[Thm.~4.8 and Lemma~4.11]{Fornasiero_2020}.
Actually, they only construct these theories in the case when $\kappa$ is finite.
But the generalization is immediate: let $\dcf^\kappa_T$ be the $L_\der$-theory such that the $L\cup\{\der_{i_1},\ldots,\der_{i_m}\}$-reduct of $\dcf^\kappa_T$ is $\dcf^m_T$ modulo the obvious relabeling for any distinct $i_1, \ldots, i_m < \kappa$, likewise for $\ncdf^\kappa_T$.

\medskip
In Theorem~\ref{thm;tcf} we prove a result about the theories described in Fact~\ref{fact:comb der}.
We first recall some necessary background.
We begin with a basic fact about derivations, see for example~\cite[Cor.~1.9.4 and Lemma~1.9.2]{trans}.

\begin{fact}\label{fact;der}
Suppose that $F/K$ is an extension of characteristic zero fields and $H \subseteq F$  is algebraically independent over $K$.
For any derivation $\der \colon K \to K$ and function $ f\colon H \to F$ there is a derivation $\der^* \colon F \to F$ such that $\der^*$ agrees with $\der$ on $K$ and agrees with $f$ on $H$.
\end{fact}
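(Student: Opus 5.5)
The plan is to extend $\der$ in three stages: first to the purely transcendental extension $K(H)$, then to a further purely transcendental extension, and finally across an algebraic extension, where characteristic zero is used.

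\emph{Step 1: the transcendental part.} Using Zorn's lemma, extend $H$ to a transcendence basis $H \cup B$ of $F$ over $K$, with $B$ disjoint from $H$. Extend $f$ to $H \cup B$ by setting $f(b) = 0$ for $b \in B$; any choice would do. Since $H \cup B$ is algebraically independent over $K$, the ring $K[H \cup B]$ is a polynomial ring. For a polynomial $P = \sum_\nu c_\nu x^\nu$, define
\[
\der^*(P(\bar h)) = P^{\der}(\bar h) + \sum_{h} \frac{\partial P}{\partial x_h}(\bar h)\, f(h).
\]
Here $P^{\der}$ is obtained from $P$ by applying $\der$ to each coefficient, and the sum runs over the finitely many variables occurring in $P$. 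This is well defined because the representation $P(\bar h)$ is unique. A routine check shows that it is additive and satisfies the Leibniz rule, agrees with $\der$ on $K$, and agrees with $f$ on $H$. The derivation then extends uniquely to the fraction field $E := K(H \cup B)$ by the quotient rule $\der^*(a/b) = (b\der^* a - a \der^* b)/b^2$. Well-definedness of the quotient rule is the standard verification.

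\emph{Step 2: the algebraic part.} Now $F/E$ is algebraic, and it is separable because we are in characteristic zero. Let $E \subseteq E' \subseteq F$ with $\der'$ a derivation on $E'$ extending $\der^*$, and let $\alpha \in F$ have minimal polynomial $P$ over $E'$. Set
\[
\der'(\alpha) = -\frac{P^{\der'}(\alpha)}{P'(\alpha)},
\]
which makes sense since $P'(\alpha) \neq 0$ by separability. Extend to $E'(\alpha) \cong E'[x]/(P)$ by
\[
Q(\alpha) \mapsto Q^{\der'}(\alpha) + Q'(\alpha)\der'(\alpha).
\]
The main point to check is well-definedness: the map $Q \mapsto Q^{\der'}(\alpha) + Q'(\alpha)\der'(\alpha)$ is a derivation-like map $E'[x] \to F$. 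Applying it to $P$ gives $0$ by the choice of $\der'(\alpha)$. By the Leibniz rule it therefore vanishes on the whole ideal $(P)$, since $PR$ maps to $0\cdot R(\alpha) + P(\alpha)\cdot(\ldots) = 0$. This is the step I expect to need the most care. The same computation shows the extension is forced, hence unique.

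\emph{Step 3: passing to all of $F$.} Apply Zorn's lemma to the poset of pairs $(E', \der')$ with $E \subseteq E' \subseteq F$ and $\der'$ extending $\der^*$, ordered by extension. Unions of chains are upper bounds, so a maximal element exists. By Step 2 a maximal element must have $E' = F$. Because extensions across algebraic elements are unique, one could alternatively avoid Zorn here, but it is not needed.

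The resulting derivation on $F$ agrees with $\der$ on $K$ and with $f$ on $H$, which proves the statement.
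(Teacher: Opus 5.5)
Your argument is correct and follows the standard route the paper relies on (the paper gives no proof and simply cites \cite[Cor.~1.9.4 and Lemma~1.9.2]{trans}): extend $H$ to a transcendence basis, extend freely over the purely transcendental part, then extend uniquely across the separable algebraic part. One point to make explicit: $f(H)$ need not lie in $E=K(H\cup B)$, so $\der^*$ on $E$ and each $\der'$ on $E'$ should be taken as derivations $E\to F$ and $E'\to F$, and the Zorn poset should be defined with codomain $F$; with that reading your formulas and your well-definedness check go through unchanged.
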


We will apply the case of Facts~\ref{fact;der}  when $\der$ is the constant zero derivation.
We need two facts about henselian fields.
Fact~\ref{fact:spine} is proven in \cite{growing_spines}.

\begin{fact}\label{fact:spine}
Any characteristic zero henselian field $K$ is elementarily equivalent to a field which admits a non-trivial henselian valuation with residue field $K$.
\end{fact}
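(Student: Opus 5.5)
We would build the required field directly as a Hahn series field over $K$ and reduce everything to Ax--Kochen--Ershov (AKE) in equicharacteristic zero. The target is $L = K((t^\Delta))$ for a suitable nontrivial ordered abelian group $\Delta$, with its $t$-adic valuation. This valuation is henselian and nontrivial, and its residue field is exactly $K$. So the whole task is to choose $\Delta$ so that $L \equiv K$ as fields.

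The steps are as follows.
\begin{enumerate}[leftmargin=*]
\item Fix a nontrivial henselian valuation $v$ on $K$, with value group $\Gamma$ and residue field $k$; note that $k$ may have positive characteristic. Let $(K^*, v^*)$ be an $|K|^+$-saturated elementary extension of $(K,v)$, with value group $\Gamma^* \succ \Gamma$.
\item Let $\Delta$ be the convex hull of $\Gamma$ in $\Gamma^*$. By saturation $\Delta$ is a proper convex subgroup, and it is nontrivial because $\Gamma \neq 0$. Let $w$ be the coarsening of $v^*$ corresponding to $\Delta$. Then $w$ is a nontrivial henselian valuation on $K^*$, since coarsenings of henselian valuations are henselian.
\item Let $K_w$ be the residue field of $w$. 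It carries the induced henselian valuation $\bar v$ with value group $\Delta$ and residue field $k$. Since every element of $K$ has $v$-value in $\Gamma \subseteq \Delta$, the field $K$ embeds into $K_w$. In particular $K_w$ has characteristic zero, so $(K^*, w)$ is an equicharacteristic zero henselian valued field.
\item Apply AKE to $(K^*, w)$. It gives $(K^*, w) \equiv K_w((t^{\Delta}))$ as valued fields, since the two have the same residue field $K_w$ and the same value group $\Gamma^*/\Delta$; strictly the Hahn field should be taken over the value group of $w$. If we also know $K_w \equiv K$ as fields, a second application of AKE replaces $K_w$ by $K$. Combining with $K^* \equiv K$, we get that $K \equiv K((t^{\Gamma^*/\Delta}))$ as fields. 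This last field is henselian with residue field $K$, as required.
\end{enumerate}

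The main obstacle is showing $K_w \equiv K$ as fields, for which I would aim at the stronger statement $(K,v) \preccurlyeq (K_w,\bar v)$. I would approach this in two steps.
\begin{itemize}[leftmargin=*]
\item First, show $\Gamma \preccurlyeq \Delta$. Since $\Gamma \preccurlyeq \Gamma^*$ and $\Delta$ is convex in $\Gamma^*$ containing $\Gamma$, I would prove this by a Tarski--Vaught argument using quantifier elimination for ordered abelian groups in the Gurevich--Schmitt or Cluckers--Halupczok language.
\item Second, prove an embedding criterion in the spirit of relative quantifier elimination. I would use the RV-sort (leading term) formalism of Flenner, which works in mixed characteristic too, to show that $(K,v) \subseteq (K_w, \bar v)$ is elementary. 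The point is that residue fields agree, the value groups are elementary, and $K_w$ is, in a suitable sense, generated from $K^*$-data over $K$ that lives inside the convex hull.
\end{itemize}

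If this route proves awkward, there is a fallback. Choose the elementary extension $(K^*,v^*)$ so that $K$ itself is the residue field of a coarsening. Concretely, work in a saturated model of the theory of the pair $(K^*, K)$, and pick the coarsening so that its residue field is canonically identified with an elementary extension of $K$. Then use the Keisler--Shelah theorem together with AKE in equicharacteristic zero to replace that residue field by $K$ itself.
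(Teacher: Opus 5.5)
The reduction you set up is correct. Steps 1--4 are fine, taking the value group of $w$ to be $\Gamma^*/\Delta$ as you note, and replacing $K_w$ by $K$ via equicharacteristic-zero AKE is legitimate. But all of the content of the statement sits in the claim $K_w \equiv K$, and you give no argument for it. The sketch you offer does not work as written.

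\begin{enumerate}[leftmargin=*]
\item The residue field of $(K_w,\bar v)$ is the residue field $k^*$ of $v^*$, not $k$. By saturation, $k^*$ is a proper elementary extension of $k$ whenever $k$ is infinite. So ``residue fields agree'' is false; only $k \preccurlyeq k^*$ holds.
\item A Tarski--Vaught argument cannot give $\Gamma \preccurlyeq \Delta$. In general $\Delta$ is not definable in $\Gamma^*$, so you have no control over which formulas hold in $\Delta$. Quantifier elimination in the Gurevich--Schmitt or Cluckers--Halupczok language only says that embeddings between models of $\Th(\Gamma)$ are elementary. It therefore presupposes $\Delta \models \Th(\Gamma)$, which is exactly what is at issue. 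You would also have to match the auxiliary spine sorts of $\Delta$ with those of $\Gamma$.
\item In mixed characteristic, residue field and value group carry too little information. They do not even determine the theory: for $p$ odd and $\epsilon \in \Z_p^\times$ a non-square, $p$ is a square in $\Q_p(\sqrt p)$ but not in $\Q_p(\sqrt{\epsilon p})$. Flenner's criterion needs the inclusion of every $\mathrm{RV}_n$-structure to be elementary. That is a further convex-hull statement about those more complicated structures, and ``generated from $K^*$-data inside the convex hull'' is not an argument for it.
\item Your fallback just restates the claim that some coarsening has residue field elementarily equivalent to $K$. Keisler--Shelah does not produce such a coarsening.
\end{enumerate}

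You can remove the mixed-characteristic problem entirely. By the same AKE argument as in your Step 4, the conclusion depends only on $\Th(K)$. In an $\aleph_1$-saturated extension, the coarsening of $v^*$ by the convex subgroup generated by $v^*(p)$ is a nontrivial henselian valuation of residue characteristic zero. After that, AKE reduces everything to a statement about ordered abelian groups. You need either $\Delta \equiv \Gamma$ for the convex hull $\Delta$ of $\Gamma$ in a saturated $\Gamma^* \succ \Gamma$, or at least that $\Gamma$ is elementarily equivalent to a group with a proper convex subgroup elementarily equivalent to $\Gamma$.

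This is not a formality, because adding a layer on top of $\Gamma$ can change its theory. Take $\Gamma = \bigoplus_{i \in \N} \Z e_i$ ordered so that $e_i \ll e_{i+1}$. The sentence
\[ \forall x\, \exists a\, \bigl(a \notin 2\Gamma \wedge \forall b\,(b - a \in 2\Gamma \to |b| > 2|x|)\bigr) \]
holds in $\Gamma$. It fails in $\Q \times_{\mathrm{lex}} \Gamma$ (with $\Q$ on top) at $x = (1,0)$: for any admissible $a=(q,g)$, take $b = (0,g)$. So the naive choice $K((s^{\Q}))$ can fail, and an actual proof must use the fine structure of $\Th(\Gamma)$, for example Schmitt's spines. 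The paper gives no argument of its own here; it cites \cite{growing_spines}. Whatever route one takes, this ordered-abelian-group input is where the substance lies, and your proposal leaves it unproven.
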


Fact~\ref{fact:ext val} is due to Jahnke~\cite{jahnke-when}.

\begin{fact}\label{fact:ext val}
If $K$ is a $\nip$ field and $v$ is a non-trivial henselian valuation on $K$ with non-separably closed residue field then $v$ is definable in $K^\mathrm{Sh}$.
\end{fact}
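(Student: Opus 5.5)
The plan is to reduce the problem to the definability of \emph{some} henselian valuation comparable to $v$. Then $v$ is recovered inside $K^\mathrm{Sh}$ by a convexity argument. The key observation is this. Suppose $w$ is a valuation on $K$ definable in $K^\mathrm{Sh}$ with value group $\Gamma_w$, and $v$ is a coarsening of $w$. Then $v$ corresponds to a convex subgroup $\Delta \le \Gamma_w$. Since $\Gamma_w = K^\times/\mathcal{O}_w^\times$ is interpretable in $K^\mathrm{Sh}$, and convex subsets of a definable linear order are externally definable, Fact~\ref{fact:she} and the fact that the Shelah completion of an $\nip$ structure is its own Shelah completion show that $\Delta$ is definable in $K^\mathrm{Sh}$. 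Hence so is
\[
\mathcal{O}_v = \{ a \in K : w(a) \ge 0 \text{ or } w(a) \in \Delta \}.
\]
So it suffices to find a valuation $w$, definable in $K^\mathrm{Sh}$, that refines $v$.

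Next I would use the structure theory of henselian valuations on $K$. Since $v$ is henselian with non-separably closed residue field, the canonical henselian valuation $v_K$ is non-trivial and comparable with $v$. Moreover, every henselian valuation with non-separably closed residue field coarsens $v_K$. In particular $v$ coarsens $v_K$, so by the first paragraph it suffices to show that $v_K$ is definable in $K^\mathrm{Sh}$.

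The step I expect to be the main obstacle is producing this definable valuation. Here I would argue as in Jahnke--Koenigsmann. Choose a prime $p$ such that $Kv_K$ (or a suitable finite extension) is not $p$-closed, and pass to a finite Galois extension $L/K$ containing a primitive $p$th root of unity. (In characteristic $p$, use instead Artin--Schreier theory together with the Kaplan--Scanlon--Wagner theorem that infinite $\nip$ fields of characteristic $p$ are Artin--Schreier closed.) The extension $L$ is interpretable in $K$ and is again $\nip$. The canonical $p$-henselian valuation $v^p_L$ is definable in $L$ by Koenigsmann's $\emptyset$-definability result, possibly with finitely many exceptional cases such as Euclidean fields when $p = 2$, which I would handle separately using the ordering. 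I would then show that $v^p_L$ is comparable to the extension of $v_K$ to $L$.

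Two cases arise for the restriction $w := v^p_L|_K$, which is definable in $K$. If $w$ refines $v_K$, we are done by the convexity argument. If $w$ is a proper coarsening, then $v_K$ induces a henselian valuation $\bar v$ on the residue field $Kw$. This field is interpretable in $K$, hence $\nip$, and $\bar v$ has non-separably closed residue field. I would then iterate the argument inside $Kw$, or argue that this case contradicts the choice of $p$, and pull the resulting definable valuation back to $K$ to get a refinement of $v$ definable in $K^\mathrm{Sh}$. The delicate points are making this iteration terminate and checking that the comparability between the $p$-henselian and henselian valuations survives restriction from $L$ to $K$.
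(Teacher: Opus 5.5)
\textbf{There is a genuine gap: the detour through $v_K$.} Your first paragraph is correct: a coarsening of a valuation definable in $K^{\mathrm{Sh}}$ is again definable there, via the convex subgroup $\Delta$ and $(K^{\mathrm{Sh}})^{\mathrm{Sh}}=K^{\mathrm{Sh}}$. It is also true that $v$ coarsens $v_K$. The trouble is that the hypotheses do not prevent $Kv_K$ from being separably closed. This happens whenever $K$ also carries a henselian valuation with separably closed residue field. For example, take $K=\mathbb{C}((s))((t))$ with $v$ the $t$-adic valuation; there $v_K$ is the rank-two valuation with residue field $\mathbb{C}$.

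In that situation no finite extension of $Kv_K$ fails to be $p$-closed, for any $p$, so your choice of $p$ is impossible. Worse, for every finite $L/K$ the extension of $v_K$ to $L$ has separably closed residue field. It is therefore a $p$-henselian valuation with $p$-closed residue field. Since $v^p_L$ is the coarsest such valuation, $v^p_L$ \emph{coarsens} the extension of $v_K$ and in general need not refine it. You are then in your second case. But there the induced valuation $\bar v$ on $Kw$ has residue field $Kv_K$, which is separably closed, so your claim about $\bar v$ fails and the iteration cannot proceed. In short, you have reduced to a statement (definability of $v_K$ in $K^{\mathrm{Sh}}$) that your method does not reach in exactly this case, and that you do not need.

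\textbf{The repair is to work with $v$ itself,} whose residue field is not separably closed by hypothesis.
\begin{itemize}
\item Pick a prime $p$ and a finite separable extension $M/Kv$ admitting a cyclic extension of degree $p$. If $p\neq\mathrm{char}(Kv)$, arrange $\zeta_p\in M$; adjoining $\zeta_p$ has degree prime to $p$, so it preserves non-$p$-closedness.
\item By henselianity, lift $M$ to a finite extension $L_0/K$ with residue field $M$, and set $L=L_0(\zeta_p)$. The residue field of the unique extension $v^L$ of $v$ is an extension of $M$ of degree prime to $p$, hence still not $p$-closed.
\item If $\mathrm{char}(K)=q>0$, then Kaplan--Scanlon--Wagner plus henselian lifting show that $Kv$ and its finite extensions are Artin--Schreier closed. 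So necessarily $p\neq q$.
\end{itemize}
Now $v^L$ is henselian, hence $p$-henselian, with non-$p$-closed residue field. Every such valuation is a coarsening of $v^p_L$. So $v^p_L$ refines $v^L$ automatically: there is no second case and no iteration.

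What you need next is that $v^p_L$ itself, not merely some coarsening of it, is $\emptyset$-definable in $L$. This is the Jahnke--Koenigsmann theorem, given $\zeta_p\in L$ and, for $p=2$, $L$ not Euclidean. Then $\mathcal{O}_{v^p_L}\cap K$ is a $K$-definable valuation ring contained in $\mathcal{O}_v$, and your first paragraph finishes the proof.

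If $p=2$ and $L$ is Euclidean, argue with the ordering instead. Its positives are the nonzero squares, so the ordering is definable. Moreover $\mathcal{O}_{v^L}$ is convex for it, because $1+\mathfrak{m}_{v^L}$ consists of squares; note that a formally real henselian field has residue characteristic $\neq 2$. Hence $\mathcal{O}_{v^L}$ is externally definable in $L$, and so $\mathcal{O}_v=\mathcal{O}_{v^L}\cap K$ is externally definable in $K$.

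The paper itself gives no proof and cites \cite{jahnke-when}, where the result is obtained via the definability of the canonical $p$-henselian valuation.
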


We also need some basic facts about tame pairs of o-minimal structures for Theorem~\ref{thm;tcf}(4).
Let $T$ be an o-minimal theory expanding $\rcf$.
A {\bf tame pair} of models of $T$ is a model $\Sa R$ of $T$ equipped with a unary relation defining a proper elementary submodel $\Sa S$ of $\Sa R$ such that $\{s \in S : s < r\}$ has a supremum in $S \cup \{\pm \infty\}$ for every $r \in R$.
Note that if $\Sa R$ is a proper elementary extension of an o-minimal expansion of $\rfield$ then the expansion of $\Sa R$ by a unary relation defining $\R$ is a tame pair.
The theory of tame pairs of models of $T$ is complete~\cite{t-convexity}.
Fact~\ref{fact:tame pair} is \cite[Thm.~A]{t-convexityII}.

\begin{fact}\label{fact:tame pair}
Suppose that $(\Sa R, \Sa S)$ is a tame pair of models of an o-minimal theory $T$ expanding $\rcf$.
Let $V$ be the convex hull of $S$ in $R$, $\mfrak$ be the set of $r \in R$ such that $|r| \le |s|$ for all positive $s \in S$, and identify $V/\mfrak$ with $S$ in the obvious way.
Then the structure induced on $V/\mfrak$ by $(\Sa R, V)$ is interdefinable with $\Sa S$.
\end{fact}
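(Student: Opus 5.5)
The plan is to work with the standard part map $\st \colon V \to S$, where $\st(r) = \sup\{ s \in S : s < r\}$. Tameness of $(\Sa R, \Sa S)$ ensures that this supremum exists in $S$ for every $r \in V$ and that $r - \st(r) \in \mfrak$. Hence $\st$ induces the identification $V/\mfrak \cong S$, and $\st$ is the identity on $S$. We then have to prove two inclusions. First, every $\Sa S$-definable $X \subseteq S^n$ is of the form $\st(Y)$ for some $(\Sa R, V)$-definable $Y \subseteq V^n$; equivalently, $\st^{-1}(X)$ agrees with an $(\Sa R,V)$-definable set. Second, conversely, $\st(Y)$ is $\Sa S$-definable for every $(\Sa R, V)$-definable $Y \subseteq V^n$.

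For the first inclusion I would use the fact that $V$ is a $T$-convex subring of $\Sa R$ in the sense of van den Dries--Lewenberg: $V$ is closed under $\emptyset$-definable continuous functions because $S \preceq R$ and $V$ is the convex hull of $S$. The key tool is the identity $\st(f(v)) = f(\st(v))$ for $v \in V^m$ and $f$ an $\Sa S$-definable continuous function on a neighborhood of $\st(v)$. Given $\Sa S$-definable $X$, I would take the $\Sa R$-definable set $X_R$ defined by the same formula and apply a cell decomposition of $X_R$ over $S$. By induction on cells, using the identity above, $\st^{-1}(X)$ is a boolean combination of sets of the form $\{ v : \st(v) \in C\}$ with $C$ an open cell or a graph. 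Each of these is $(\Sa R,V)$-definable, since for instance $\st(v)$ lies in an open cell iff $v$ lies in the cell and its boundary functions differ from $v$ by elements outside $\mfrak$, and $\mfrak$ is definable from $V$. Because $\st$ is the identity on $S$, it actually suffices to show that $X = \st(Y)$ for some definable $Y$, and here $Y = X_R \cap V^n$ should already work after removing a thin boundary part.

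The second inclusion is the main obstacle, because formulas in $(\Sa R, V)$ may use parameters from $R \setminus S$ and the predicate $V$. I would first invoke quantifier elimination for $T$-convex theories (van den Dries--Lewenberg; valid after naming $V$) to reduce $Y$ to a boolean combination of $\Sa R$-definable sets intersected with sets cut out by conditions $g(x) \in V$ or $g(x) \in \mfrak$ for $\Sa R$-definable $g$. Then $s \in \st(Y)$ iff for every $\epsilon \in S^{>0}$ the box of radius $\epsilon$ around $s$ meets $Y$, or a similar condition with quantification over $S$ only. This condition is a property of $s$ together with the type over $S$ of the finitely many parameters $b \in R$. By the Marker--Steinhorn theorem, tameness makes $\mathrm{tp}(b/S)$ definable. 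So the condition ``the box around $s$ meets $Y$'' is expressed by an $\Sa S$-formula in $s$ and $\epsilon$, and hence $\st(Y)$ is $\Sa S$-definable.

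The delicate point is making the ``box meets $Y$'' reduction correct when $Y$ involves the $V$/$\mfrak$ conditions. I would handle it by pulling the conditions $g(x) \in \mfrak$ back through the definability of types as well, since whether $g(x) \in \mfrak$ is decided by comparing $g(x)$ with all positive elements of $S$.
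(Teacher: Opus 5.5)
Your first inclusion is fine, and it can be shortened. Every $\Sa S$-definable set is a boolean combination of zero sets of continuous $\Sa S$-definable $f\colon S^n\to S$ (for instance distance functions to closed sets). Your identity $\st(f(v))=f(\st(v))$ then gives $\st^{-1}(f^{-1}(0))=\{v\in V^n : f(v)\in\mfrak\}$. The gap is in the second inclusion, exactly where you flag the delicate point, and your proposed patch does not close it.

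\textbf{Why the box criterion fails.} The criterion is valid for $\Sa R$-definable $Y$. There, by o-minimality, the set of radii $\epsilon\in R^{>0}$ for which the box around $s$ meets $Y$ is an interval with an endpoint in $R$. So if it contains $S^{>0}$, it contains a positive infinitesimal. For $(\Sa R,V)$-definable $Y$, however, that set can be $R^{>0}\setminus\mfrak$, and the criterion is then false. Take $Y=\{y\in V : y>0,\ y\notin\mfrak\}$ and $s=0$. Every box of radius $\epsilon\in S^{>0}$ around $0$ meets $Y$, since it contains $\epsilon/2$, but $\st(Y)=S^{>0}$. Moreover, the box condition is then an $(\Sa R,V)$-formula in $b$, so Marker--Steinhorn does not apply to it.

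\textbf{Why the patch is circular.} Rewriting $g(x)\in\mfrak$ as $\forall\delta\in S^{>0}\,|g(x)|<\delta$ only turns $s\in\st(Y)$ into a condition of the form $\exists x\in R^n\,\forall\delta\in S\,\theta(x,\delta,s,b)$. Definability of $\mathrm{tp}(b/S)$ handles $L$-formulas $\chi(c,b)$ whose other variables all range over $S$. It gives no way to remove a quantifier over $R$ standing outside quantifiers over $S$. Removing such quantifiers is essentially what the statement asserts, so the argument becomes circular at this point.

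\textbf{A repair using your own tools.} Apply quantifier elimination to the formula defining $\st(Y)$ rather than to $Y$. For $s\in S^n$ we have $s\in\st(Y)$ iff $(\Sa R,V)\models\exists x\,\bigl(x\in Y\wedge\bigwedge_i (x_i-s_i)\in\mfrak\bigr)$.
\begin{enumerate}
\item After the usual definitional expansion making $T$ universal with QE, van den Dries--Lewenberg QE for $T_{\mathrm{convex}}$ makes this equivalent to a quantifier-free $\theta(s,b)$. This is a boolean combination of $L$-formulas $\chi(s,b)$ and atoms $g(s,b)\in V$.
\item On $S^n$, each $\chi(s,b)$ cuts out an $\Sa S$-definable set by Marker--Steinhorn.
\item The set $\{s\in S^n : g(s,b)\in V\}$ equals $\{s\in S^n : \exists s'\in S\ |g(s,b)|<s'\}$. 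It is $\Sa S$-definable by Marker--Steinhorn applied to $|g(x,b)|<y$, followed by projection inside $\Sa S$.
\end{enumerate}
Hence $\st(Y)$ is $\Sa S$-definable, and the box criterion is not needed at all. For comparison, the paper gives no argument for this fact; it simply quotes \cite[Thm.~A]{t-convexityII}.
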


Proposition~\ref{prop:dcf} is a special case of Theorem~\ref{thm;tcf}.

\begin{thm}
\label{thm;tcf}
Suppose that $L$ expands the language of rings, $T$ is an $L$-theory expanding the theory of characteristic zero fields, and $\kappa \ge 1$.
\begin{enumerate}
\item If $T$ is algebraically bounded and $\kappa \ge \aleph_0$ then $\dcf^\kappa_T$ is trace equivalent to $D^{\kappa}(T)$.
\item If $T$ is algebraically bounded and $\kappa \ge 2$ then $\ncdf^\kappa_T$ is trace equivalent to $D^{\kappa + \aleph_0}(T)$.
\item If $T$ is the theory of a non-algebraically closed characteristic zero $\nip$ henselian field then $\dcf^\kappa_T$ is trace equivalent to $D^{\kappa + \aleph_0}(T)$.
\item If $T$ is o-minimal then any completion of $\dcf^\kappa_T$ is trace equivalent to $D^{\kappa + \aleph_0}(T)$.
\end{enumerate}
\end{thm}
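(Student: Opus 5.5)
The plan is to derive all four parts from the three criteria of Section~\ref{section:dkap}: Lemma~\ref{lem:dkap} for (1), Lemma~\ref{lem:two un} for finite $\kappa$ in (2), and Lemma~\ref{lem:one un} for (3) and (4). In every case Fact~\ref{fact:comb der} gives quantifier elimination relative to $T$. By Fact~\ref{fact:relqe} and Proposition~\ref{prop:key}(1), this already shows that the derivation theory is trace definable in $D^{\kappa+\aleph_0}(T)$. So the real content is the reverse direction, and that is exactly what the lemmas supply once their extension and density hypotheses are checked.

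For (1), apply Lemma~\ref{lem:dkap} with $T^*_0$ the theory of a model of $T$ with $\kappa$ commuting derivations and $T^* = \dcf^\kappa_T$. The relevant notion of independence is $\mathrm{acl}$-independence in an algebraically bounded field, which coincides with field-theoretic algebraic independence. Given $A \subseteq M$ independent over $K_0 := $ the algebraic closure in $M$ of $\bigcup_i f_i(A)$, use Fact~\ref{fact;der} with the zero derivation on $K_0$ to extend each $f_i$ to a derivation $\der_i$ on $M$. The required properties are: $\der_i$ is zero on $K_0$, equals $f_i$ on $A$, and is zero on a transcendence basis $B \supseteq A$ of $M$ over $K_0$ with $A$ removed. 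Then $\der_i(M)\subseteq K_0$ (derivatives of elements algebraic over $K_0(B)$ are computed from the derivatives of $B$, with coefficients in $M$). I need $\der_i\der_j = 0$ to get commutation, and this is the delicate point. The value $\der_j(b)$ lies in $K_0$ for $b\in B$, and $\der_i$ kills $K_0$, so $\der_i\der_j$ vanishes on $B\cup K_0$. But $[\der_i,\der_j]$ is itself a derivation vanishing on $K_0(B)$, hence vanishing on its algebraic extension $M$. So the derivations commute.

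Part (2) with $\kappa$ infinite is the same argument without needing commutation. For $2\le\kappa<\aleph_0$, use Lemma~\ref{lem:two un} with $f=\der_0$, $g=\der_1$. Given $a\in N\setminus M$, $a$ is transcendental over $M$ because $M \preceq N$ and the theory is algebraically bounded. Extend each derivation of $M$ to $M(B)$, for $B$ a transcendence basis of $N$ over $M$ containing $a$, prescribing $\der_0(a)=b$, $\der_1(a)=b^*$. Then extend uniquely to $N$. Fact~\ref{fact;der} with nonzero base derivation handles this.

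For (3) and (4), apply Lemma~\ref{lem:one un}. Hypothesis (1) is Fact~\ref{fact:comb der}. Hypothesis (2) is the standard genericity of $\dcf_T$: for distinct $i_1,\dots,i_n$, the set of tuples $(\der_{i_1}a,\dots,\der_{i_n}a)$ is dense, and so is $(a,\der a,\dots,\der^{(m)}a)$. To check this, take a box, take a tuple realizing it in an elementary extension, and realize a transcendental element with the prescribed derivative values via Fact~\ref{fact;der}. Relative model completeness then pulls a witness down. This is the axiom scheme used by Fornasiero--Terzo and Fornasiero--Kaplan, and I would cite their axiomatization.

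The main obstacle is hypothesis (3): producing a Shelah-definable open $V$ and $E$ with $V/E$ a model of $T$. In case (4), take a tame pair, noting that a sufficiently saturated $\Sa N$ has an elementary submodel $\Sa S$ that is Dedekind-complete in it. Use $V$ the convex hull of $S$ and $E$ given by $\mfrak$; both are convex, hence $\Sh N$-definable. Fact~\ref{fact:tame pair} identifies the induced structure on $V/E$ with $\Sa S\models T$, modulo checking that $\Sh N$ induces no more than the pair does on $V/\mfrak$, or at least an expansion of $\Sa S$, which is all that is needed. In case (3), by Fact~\ref{fact:spine} replace the model by one carrying a nontrivial henselian valuation $v$ with residue field $\equiv K$, which is not separably closed since $K$ is not algebraically closed. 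Fact~\ref{fact:ext val} makes $v$ definable in $\Sh N$. Take $V=\mathcal O_v$ and $E$ congruence mod $\mfrak_v$, which has open classes. Then the residue field $V/E$ is a model of $T$ whose induced structure expands its field structure, and the field structure is all $T$ sees up to interdefinability, since $T$ is the theory of a field. Completeness of $\dcf^\kappa_T$ in (3) follows from Fact~\ref{fact:comb der}(1), since henselian characteristic-zero fields are algebraically bounded.
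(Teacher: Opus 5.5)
Your decomposition is the paper's: Lemma~\ref{lem:dkap} for (1) and for infinite $\kappa$ in (2), Lemma~\ref{lem:two un} for finite $\kappa$ in (2), and Lemma~\ref{lem:one un} for (3) and (4), with the same choices of $V$ and $E$. However, part (3) is missing an ingredient.

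\textbf{The gap in (3).} Lemma~\ref{lem:one un} requires $T$ to carry an $L$-definable topology. Its hypotheses (2) and (3) are stated relative to that topology: density of the sets $\{(a,\der a,\ldots,\der^{(m)}a)\}$, and openness of $V$ and of the $E$-classes. You never say what this topology is. The valuation $v$ you obtain from Facts~\ref{fact:spine} and~\ref{fact:ext val} lives on one chosen model and is only $\Sh N$-definable, so its topology cannot be used directly. Your density argument needs the boxes to be $L(N)$-definable. This holds whether you realize a generic point of a box in an $L$-elementary extension and pull a witness back by relative model completeness, or invoke the Fornasiero--Terzo axioms, which speak about $L$-definable sets of full dimension.

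The paper closes this with the Prestel--Ziegler theorem. Since models of $T$ are not separably closed, $T$ has an $L$-definable topology that agrees with the henselian topology on every model admitting a nontrivial henselian valuation. In that topology, products of nonempty open sets are $L$-definable of full dimension, so the $\dcf_T$ axioms give density. Moreover $\mathcal O_v$ and the cosets of $\mfrak_v$ are open. You should also make sure $v$ lives on the $L$-reduct of a model of $\dcf^\kappa_T$ itself, not merely on some field elementarily equivalent to $K$. The paper arranges this by taking $(F,\der)$ highly saturated and applying Fact~\ref{fact:spine} there.

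\textbf{The claims in (1).} Two of your claims are false: that $\der_i(M)\subseteq K_0$, and that you need $\der_i\der_j=0$. For $a\in A$ one has $\der_j(a^2)=2af_j(a)\notin K_0$ in general, and $\der_i\der_j(a^2)=2f_i(a)f_j(a)$, which is generally nonzero. (The paper's text asserts $\der_i\circ\der_j=0$, which fails for the same reason.) Your closing argument is the correct one and does not use those claims. Since $\der_j$ maps $K_0\cup B$ into $K_0$ (recall $A\subseteq B$) and $\der_i$ kills $K_0$, the derivation $[\der_i,\der_j]$ vanishes on $K_0(B)$. It therefore vanishes on its algebraic extension $M$. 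Drop the false statements and keep this.

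\textbf{Minor points.} In the finite case of (2), transcendence of $a$ over $M$ follows from $M\preceq N$ alone; algebraic boundedness is not needed. Your density sketch via Fact~\ref{fact;der} does not produce $T$-compatible derivations in (4). Nor, without extra care, does it produce commuting ones. So there you genuinely rely on the Fornasiero--Kaplan and Fornasiero--Terzo density results, as the paper does.
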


We say ``any" completion in (4) as $\dcf^\kappa_T$ may not be complete.
It is conjectured that any infinite $\nip$ field is elementarily equivalent to a henselian field and that any $\infty$-$\nip$ field is $\nip$.
By the proof of Theorem~\ref{thm:distinct} $D^{\aleph_0}(T)$ is trace maximal when $T$ is $\infty$-$\ip$.
Hence if both of these conjectures hold then (3) above covers all theories $T$ of non-algebraically closed characteristic zero fields for which $D^{\kappa + \aleph_0}(T)$ is not trace maximal.

\begin{proof}
We first prove (1) and the case of (2) when $\kappa \ge \aleph_0$.
By Lemma~\ref{lem:dkap} and Fact~\ref{fact:comb der} it suffices to suppose that $A$ is a subset of a characteristic zero field $K$ and $(f_i)_{i < \kappa}$ is a collection of functions $A \to K$ such that $A$ is algebraically independent over $\bigcup_{i < \kappa} f_i(A)$ and show that each $f_i$ extends to a derivation $\der_i \colon K \to K$ such that the $\der_i$ commute.
Let $F$ be the algebraic closure in $K$ of the subfield generated by $\bigcup_{i < \kappa} f_i(A)$, so $A$ is algebraically independent over $F$.
Let $B \subseteq K$ be such that $A \cup B$ is a transcendence basis for $F/K$.
For each $i < \kappa$ let $\der_i$ be a derivation $K \to K$ such that $\der_i$ agrees with $f_i$ on $A$, $\der_i$ vanishes on $B$, and $\der_i$ vanishes on $F$.
Note that $\der_i \circ \der_j$ is constant zero for any $i, j  < \kappa$, hence the $\der_i$ commute.

\medskip
The case of (2) when $2 \le \kappa < \aleph_0$ follows by applying Lemma~\ref{lem:two un} with $T^* = \ncdf^\kappa_T$ and $T^*_0$ the theory of a model of $T$ equipped with $\kappa$ derivations.

\medskip
We now suppose that $T$ is as in (3).
The case when $\kappa$ is infinite follows from (1) as $T$ is algebraically bounded.
Hence we may suppose that $\kappa = 1$.
We verify the conditions of Lemma~\ref{lem:one un}.
As the models of $T$ are not separably closed a theorem of Prestel-Ziegler shows that $T$ admits a definable topology which agrees with the henselian topology on any model of $T$ that admits a non-trivial henselian valuation~\cite{Prestel1978}.
We work with respect to this definable topology.
Fix a highly saturated model $(F, \der)$ of $\dcf_T$.
Let $m \ge 1$ and $X = \{ (a, \der(a), \ldots, \der^{(m)}(a)) : a \in F \}$.
Fix nonempty open $U_0, \ldots, U_m \subseteq F$.
Then $X$ intersects $U_0 \times \cdots \times U_m$ by \cite[Def.~3.3]{antongiulio2024} as $U_0 \times \cdots \times U_m$ is $(m + 1)$-dimensional.
Hence $X$ is dense in $F^{m + 1}$.
By Fact~\ref{fact:spine} and saturation there is a non-trivial Henselian valuation $v$ on $F$ such that the residue field of $v$ is elementarily equivalent to $F$.
Let $V$ be the valuation ring of $v$, $\mfrak$ be the maximal idea of $V$, and $E$ be the equivalence relation on $F$ given by declaring $E(a, a^*)$ when $a - a^* \in \mfrak$.
In particular, two elements of $V$ are $E$-equivalent if and only if they have the same residue.
Finally, by Fact~\ref{fact:ext val} both $V$ and $E$ are definable in $F^\mathrm{Sh}$.

\medskip
We finally prove (4).
Suppose that $T$ is o-minimal.
Note that any model of $\dcf^\kappa_T$ is trace definable in $D^{\kappa + \aleph_0}(T)$ by the same argument as above.
It is enough to fix a $|T|^+$-saturated model $(\Sa R, \dik)$ of $\dcf^\kappa_T$ and show that $\Th(\Sa R, \dik)$ trace defines $D^{\kappa + \aleph_0}(T)$.
By saturation there is an elementary substructure $\Sa S$ of $\Sa R$ such that $(\Sa R, \Sa S)$ is a tame pair of models of $T$.
Let $V, \mfrak$ be as in Fact~\ref{fact:tame pair} and let $E$ be the equivalence relation on $R$ given by declaring $E(a,a^*)$ when $a - a^* \in \mfrak$.
Note that $V$ and $\mfrak$ are both definable in $\Sh R$ as they are convex.
By Fact~\ref{fact:tame pair} the structure induced on $V/E$ by $\Sh R$ is interdefinable with an expansion of a model of $T$.
By \cite[Lemma~5.5]{Fornasiero_2020} $\{ (\der_{i_1}(a), \ldots, \der_{i_n}(a)) : a \in R\}$ is dense in $R^{n}$ for any distinct $i_1, \ldots, i_n < \kappa$ and $\{ (a, \der_i(a), \ldots, \der^{(m)}_i(a)) : a \in R \}$ is dense in $R^{m + 1}$ for each $m \ge 1$ and $i < \kappa$.
Apply Lemma~\ref{lem:one un}.
\end{proof}

We give a corollary to dense pairs.
Recall that a {\bf dense pair} of o-minimal structures is an expansion of an o-minimal structure by a unary relation defining a dense proper elementary substructure.
In particular if $\Sa R$ is an o-minimal expansion of $\rfield$ then the expansion of $\Sa R$ by a unary relation defining any proper elementary substructure is a dense pair.
We let $\tdense$ be the theory of dense pairs of models of an o-minimal theory $T$ expanding $\rcf$, this is complete by~\cite{dense-pairs}.

\begin{corollary}\label{cor:dense pair}
Let $T$ be an o-minimal theory expanding $\rcf$.
Then $\tdense$ is locally trace equivalent to $T$, trace definable in $D^{\aleph_0}(T)$, but does not trace define $D^{\aleph_0}(T)$.
\end{corollary}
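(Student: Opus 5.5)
We will get all three claims from a single observation: $\tdense$ is, up to elementary equivalence, a reduct of a model of $\dcf_T = \dcf^1_T$. Then we use Theorem~\ref{thm;tcf}(4) together with a strong-dependence obstruction.

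\medskip
\emph{Step 1: $\tdense$ is interpretable in $\dcf_T$.} Take a sufficiently saturated $(\Sa R, \der) \models \dcf_T$ and let $C = \{a \in R : \der(a) = 0\}$ be its field of constants. By the work of Fornasiero and Kaplan~\cite{Fornasiero_2020}, $C$ is the domain of an elementary substructure of $\Sa R$; this holds because $T$-compatible derivations vanish on $\mathrm{dcl}$ of constants. Moreover $C$ is proper and dense in $R$. For density, apply \cite[Lemma~5.5]{Fornasiero_2020}, i.e. density of $\{(a,\der(a)) : a \in R\}$, together with the relative quantifier elimination: the generic axioms give, in every open interval, an element with $\der(a)=0$. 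Hence $(\Sa R, C)$ is a dense pair of models of $T$. As $C$ is $\emptyset$-definable in $(\Sa R, \der)$, every model of $\tdense$ (which is complete) is a reduct of a model of a completion of $\dcf_T$. By Theorem~\ref{thm;tcf}(4) with $\kappa = 1$, $\tdense$ is therefore trace definable in $D^{\aleph_0}(T)$.

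\medskip
\emph{Step 2: local trace equivalence with $T$.} Since $T$ is a reduct of $\tdense$, $\tdense$ trace defines $T$. Conversely, Step 1 shows that $\tdense$ is trace definable in $D^{\aleph_0}(T)$, and $D^{\aleph_0}(T)$ is locally trace definable in $T$ by Lemma~\ref{lem:c}. By transitivity, $\tdense$ is locally trace definable in $T$.

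\medskip
\emph{Step 3: $\tdense$ does not trace define $D^{\aleph_0}(T)$.} The plan is to use strong dependence, which is preserved under trace definability by Fact~\ref{fact:preserve}.
\begin{itemize}
\item $D^{\aleph_0}(T)$ is not strongly dependent. Take $(P, \Sa M, (\uppi_i)_{i<\omega})$ and the formulas $\varphi_i(x; y) := \uppi_i(x) < y$ for $x$ in the first sort. For each $i$, choose an increasing sequence $(b_{i,j})_{j<\omega}$ in $M$. By the axioms in Proposition~\ref{prop:pik1}(2), for every $\eta \colon \omega \to \omega$ and every finite $n$ there is $p \in P$ with $b_{i,\eta(i)} \le \uppi_i(p) < b_{i,\eta(i)+1}$ for all $i<n$. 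This yields an ict-pattern of infinite depth in a single variable.
\item $\tdense$ is strongly dependent. Here I would cite the known result that dense pairs of o-minimal structures have finite dp-rank (indeed dp-rank $2$ in one variable), e.g.\ via the analysis of definable sets by van den Dries and the dp-rank computations for dense pairs in the literature, due to Dolich--Goodrick and Berenstein--Vassiliev.
\end{itemize}
Hence if $\tdense$ trace defined $D^{\aleph_0}(T)$, then $D^{\aleph_0}(T)$ would be strongly dependent, a contradiction.

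\medskip
The main obstacles are the two cited external inputs. The first is the density and elementarity of the constant field in Step~1; if the direct citation is awkward, I would instead derive density from condition (2) of Lemma~\ref{lem:one un} verified in the proof of Theorem~\ref{thm;tcf}(4). The second, and the one I expect to be hardest, is the precise reference for strong dependence (finite dp-rank) of $\tdense$. If it cannot be cited cleanly, the fallback is to prove finite dp-rank directly from van den Dries' description of definable sets in dense pairs. That description says a definable subset of $R$ is, up to a small set, a finite union of intervals, which bounds dp-rank by $2$ in one variable.
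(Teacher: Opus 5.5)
Your proposal follows the paper's proof: the constant field of a model of $\dcf_T$ gives a dense pair, so Theorem~\ref{thm;tcf}(4) yields trace definability in $D^{\aleph_0}(T)$ and hence local trace equivalence with $T$, and the negative part is the paper's strong-dependence argument (the paper cites Berenstein--Dolich--Onshuus for strong dependence of $\tdense$). Two small fixes are needed: since $\uppi_i(x) < y$ is monotone in $y$ it cannot satisfy the negated instances of an ict-pattern, so use $y \le \uppi_i(x) < y'$ with parameters $(b_{i,j}, b_{i,j+1})$, or simply $\uppi_i(x) = y$ with distinct $b_{i,j}$, which already shows that $D^{\aleph_0}(\triv)$ is not strongly dependent, as the paper notes; and density of the constants comes from the genericity axiom applied to $I \times \{0\}$, not from density of $\{(a,\der(a)) : a \in R\}$, so your proposed fallback via condition (2) of Lemma~\ref{lem:one un} would not work.
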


Recall that the {\bf constant subfield} of a differential field $(F, \der)$ is $\{a \in  F : \der(a) = 0 \}$.

\begin{proof}
We first show that $\tdense$ is locally trace equivalent to $T$ and trace definable in $D^{\aleph_0}(T)$.
As $T$ is a reduct of $\tdense$ it suffices to show that $\tdense$ is trace definable in $D^{\aleph_0}(T)$.
The constant subfield of any $(\Sa R, \der) \models \dcf_T$ is a dense proper elementary substructure of $\Sa R$ by the proof of~\cite[Cor.~4.22]{Fornasiero_2020}.
Hence $\tdense$ is interpretable in $\dcf_T$.
Apply Theorem~\ref{thm;tcf}(4).

\medskip
We now show that $\tdense$ does not trace define $D^{\aleph_0}(T)$.
Berenstein, Dolich, and Onshuus showed that $\tdense$ is strongly dependent~\cite{gdensepairs}.
Strong dependence is preserved under trace definability by Fact~\ref{fact:preserve}.
So it suffices to show that $D^{\aleph_0}(T)$ is not strongly dependent.
In fact, it is easy to see that $D^{\aleph_0}(\triv)$ is not strongly dependent.
\end{proof}

We finally show that Proposition~\ref{prop;triv} is sharp.
We just give a sketch of the proof.

\begin{lemma}\label{lem:A fin}
If $n \ge 1$ then $A_n$ is totally transcendental.
\end{lemma}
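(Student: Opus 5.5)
We will show that $A_n$ is $\omega$-stable in a countable language; this implies total transcendence. Equivalently, we show that over any countable set of parameters in a model $\Sa M = (M; f_1, \ldots, f_n)$ of $A_n$ there are only countably many $1$-types. Since $A_n$ is the model completion of a universal theory, it admits quantifier elimination, as noted above. Hence a $1$-type over $B \subseteq M$ is determined by its quantifier-free part, that is, by which equations $t(x) = b$ and $t(x) = s(x)$ it contains. Here $t, s$ range over terms in the commuting functions and $b$ over the substructure generated by $B$.

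Because the $f_i$ commute, every unary term is equivalent to $f^{\alpha}(x) := f_1^{(\alpha_1)} \circ \cdots \circ f_n^{(\alpha_n)}(x)$ for some $\alpha \in \N^n$. So the type of $a$ over $B$ is coded by two pieces of data.
\begin{enumerate}[leftmargin=*]
\item The set $S_a = \{\alpha : f^\alpha(a) \in \langle B \rangle\}$, together with the values $f^\alpha(a)$ for $\alpha \in S_a$.
\item The relation $\{(\alpha, \beta) : f^\alpha(a) = f^\beta(a)\}$.
\end{enumerate}

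The key observation is that $S_a$ is an upward-closed subset of $\N^n$ (an ideal in the monoid order). By Dickson's lemma it is generated by finitely many minimal elements $\alpha^1, \ldots, \alpha^k$. Moreover, $f^{\alpha^j}(a) = b_j \in \langle B \rangle$ determines $f^{\alpha}(a)$ for all $\alpha \ge \alpha^j$. Hence the data in (1) is determined by a finite tuple from the countable set $\langle B \rangle$.

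Similarly, $\{(\alpha, \beta) : f^\alpha(a) = f^\beta(a)\}$ is a congruence on the monoid $\N^n$ that is compatible with translation: if $f^\alpha(a) = f^\beta(a)$ then $f^{\alpha+\gamma}(a) = f^{\beta+\gamma}(a)$. This makes it a congruence of the finitely generated commutative monoid $\N^n$. By Rédei's theorem every such congruence is finitely generated, so there are only countably many of them. (Alternatively, one can use Noetherianity of $\Z[\N^n]$, that is, of the polynomial ring.) Consequently there are at most countably many possibilities for the data in (1) and (2) combined, so $|S_1(B)| \le \aleph_0$.

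The main obstacle is making sure that this data really determines the quantifier-free type. We must check that equations of the forms $t(x) = s(x)$ and $t(x) = b$ exhaust the atomic formulas. We must also check that no extra dependence arises from interactions between $\langle B \rangle$ and the orbit of $a$, for example $f^\alpha(a) = f^\beta(b')$ with $b' \in B$. Such interactions are absorbed into (1), since $f^\beta(b') \in \langle B \rangle$. Once this is settled, countability of types over countable sets gives $\omega$-stability. Since the language is countable, $A_n$ is then totally transcendental.
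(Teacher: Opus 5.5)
Your proof is correct, but it follows a different route from the paper's. The paper does no type counting in $A_n$. It uses Fact~\ref{fact;der} to embed a model $(M;f_1,\ldots,f_n)$ of $A_n$ into the $\{\der_1,\ldots,\der_n\}$-reduct of a model of $\dcf^n$: take $M$ algebraically independent over $\Q$ and extend each $f_i$ to a derivation $\der_i$ of $\Q(M)$; the commutators $[\der_i,\der_j]$ are derivations vanishing on $M$, hence zero. It then gets trace definability of $A_n$ in $\dcf^n$ from quantifier elimination via Fact~\ref{fact:trace embedd}. Finally it imports McGrail's theorem that $\dcf^n$ is totally transcendental, together with preservation of total transcendence under trace definability (Fact~\ref{fact:preserve}).

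You instead compute the type spaces directly. After quantifier elimination, a $1$-type over a countable $B$ is coded by two pieces of data. The first is the upward-closed set $S_a\subseteq\N^n$ with its values, which Dickson's lemma makes finitely determined. The second is the kernel congruence of $\alpha\mapsto f^\alpha(a)$, which is finitely generated by R\'edei's theorem (or by Hilbert's basis theorem applied to the binomial ideal). In effect the paper leans on the Ritt--Raudenbush-type Noetherianity behind McGrail's result, while you use only the elementary Noetherianity of the monoid $\N^n$.

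The paper's argument is two lines and showcases the trace-definability machinery that is the subject of the paper. Yours is self-contained, independent of both McGrail's theorem and the preservation result, and gives an explicit description of the types. The only step you leave implicit is the routine reduction from $n$-types to $1$-types in the definition of $\omega$-stability, which is standard.
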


\begin{proof}
Fix $(M; f_1, \ldots, f_n) \models A_n$.
An easy application of Fact~\ref{fact;der} shows that there is an embedding $\uptau$ of $(M; f_1, \ldots, f_n)$ into the $\{\der_1, \ldots, \der_n\}$-reduct of some model of $\dcf^n$.
By Fact~\ref{fact:trace embedd} $(M, f_1, \ldots, f_n)$ is trace definable in $\dcf^n$.
Finally, $\dcf^n$ is totally transcendental by a theorem of McGrail~\cite{macgrail}, so $A_n$ is totally transcendental by Fact~\ref{fact:preserve}.
\end{proof}

\subsection{More theories trace equivalent to $D^{\aleph_0}(\rcf)$}\label{section:rcf}
We have shown that $D^{\aleph_0}(\rcf)$ is trace equivalent to the theory of a real closed field equipped with a generic derivation.
We now show that it is also trace equivalent to the theory of a more interesting ordered differential field.
See \cite{trans} for an account of the transseries.

\begin{proposition}\label{prop:rcf}
The theory of tame pairs of real closed fields and the theory of the ordered differential field of transseries are both trace equivalent to $D^{\aleph_0}(\rcf)$.
\end{proposition}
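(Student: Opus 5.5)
Write $T_{\mathrm{tame}}$ for the theory of tame pairs of real closed fields and $T_{\T}$ for the theory of the ordered differential field of transseries. For each theory we will prove two directions: it is trace definable in $D^{\aleph_0}(\rcf)$, and it trace defines $D^{\aleph_0}(\rcf)$.

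\emph{Upper bounds.} By Proposition~\ref{prop:key}(2), since both languages are countable, it suffices to show that each theory is locally trace definable in $\rcf$.
\begin{itemize}
\item For $T_{\mathrm{tame}}$: tame pairs of o-minimal structures admit a relative quantifier elimination (van den Dries--Lewenberg). Every formula is equivalent to a boolean combination of formulas $\varphi(x, \st(t_1(x)),\ldots,\st(t_k(x)))$, with $\varphi$ an $L$-formula over the big field and $t_j$ definable functions. Here $\st$ is the standard part map $V \to S$, extended by a default value off $V$.
\item Using this, take as witnesses the identity on $R$ together with the countably many unary functions $a \mapsto \st(t(a))$, viewed as maps into $R$. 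The remaining issue is that the $t_j$ take tuples as arguments. I would handle this by reducing to unary functions via a dimension/cell argument, or more cleanly by applying Lemma~\ref{lem:ka} together with Proposition~\ref{prop:prekey}.
\item A cleaner route is available: Fact~\ref{fact:tame pair} shows that a tame pair is a $\nip$ structure whose open core and induced structure are controlled by $T$. The tame pair is interpretable in the Shelah completion of an elementary extension of a real closed field, namely the convex subring $V$ is externally definable. Shelah completions of $\nip$ structures are trace equivalent to the base structure (Fact~\ref{fact:she}). So $T_{\mathrm{tame}}$ is trace definable in $\rcf$ itself, and hence in $D^{\aleph_0}(\rcf)$.
\item For $T_{\T}$: use the Aschenbrenner--van den Dries--van der Hoeven quantifier elimination in the language with $\der$, the ordering, and the predicates/functions $\Lambda,\Omega,\iota$. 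Every formula is equivalent to a quantifier-free formula in terms built from iterated derivatives. I would try to show that the extra unary symbols reduce, after applying the derivation finitely often, to $\rcf$-conditions on tuples $(a,\der a,\ldots,\der^n a)$ together with these unary predicates. This yields a countable family of unary functions witnessing local trace definability in $\rcf$. I expect this to be the main obstacle, since $\Lambda,\Omega$ are not functions and the QE is in an extended language. If needed, I would replace $\rcf$ by its Shelah completion, using Fact~\ref{fact:she}, to absorb the convex predicates.
\end{itemize}

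\emph{Lower bounds.} Both theories should trace define $\mathrm{CODF}^1 = \dcf_{\rcf}$, which is trace equivalent to $D^{\aleph_0}(\rcf)$ by Theorem~\ref{thm;tcf}(4).
\begin{itemize}
\item For $T_{\T}$, I would instead apply Lemma~\ref{lem:one un} directly, with $T = \rcf$ and the order topology. Condition (3) follows as in the proof of Theorem~\ref{thm;tcf}(4): in a saturated model take a tame pair $(\Sa R,\Sa S)$, which lives in the Shelah completion, so $V/\mfrak$ gives a model of $\rcf$.
\item Condition (2) of Lemma~\ref{lem:one un} fails for $\T$, since $\der$ is not generic there (for instance, $H$-field axioms tie the sign of $\der a$ to the valuation). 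I would therefore bypass Lemma~\ref{lem:one un} and apply Lemma~\ref{lem;g} with $\Cal E = \{a\mapsto \st(\der^{(n)} a)\}$ restricted to a suitable definable set $X$. The needed surjectivity statement is that for any reals/standard values $b_0,\ldots,b_n$ there is $a$ with $\der^{(i)}a$ infinitesimally close to $b_i$. This should follow from the density of jets in $\T$ relative to the constant field, i.e.\ solving differential inequalities via newtonianity. This density step is where I expect the real work.
\item For $T_{\mathrm{tame}}$: the pair $(\Sa R,\Sa S)$ with standard part map should interpret a space-filling curve mod $\mfrak$. Concretely, fix an infinitesimal $\epsilon$ and use a definable $R$-function to encode pairs of standard values, giving a definable surjection $V \to S^2$ via standard parts. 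Proposition~\ref{prop;ginsep}, adapted through Lemma~\ref{lem;g} with $O = S$, then gives trace definability of $D^{\aleph_0}(\rcf)$.
\item If $R$ has only one infinitesimal scale this encoding fails. In that case I would pass to a saturated model, where $\mfrak$ has many distinct archimedean classes, and take $a\mapsto(\st(a),\st((a-\st a)/\epsilon))$, which is surjective onto $S^2$ on $V$.
\end{itemize}
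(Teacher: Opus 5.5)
Two steps of your proposal fail as written, and both failures come from one missing observation.

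\textbf{The two failing steps.} First, the upper bound for tame pairs. Your ``cleaner route'' is false: the theory of tame pairs is \emph{not} trace definable in $\rcf$. Your own lower bound shows that tame pairs trace define $D^{\aleph_0}(\rcf)$. That theory is not strongly dependent (see the proof of Corollary~\ref{cor:dense pair}), whereas $\rcf$ is strongly dependent, and strong dependence is preserved under trace definability (Fact~\ref{fact:preserve}). The error is that external definability of $V$ only puts $(R,V)$ inside $R^{\mathrm{Sh}}$. Fact~\ref{fact:tame pair} describes the structure induced on the residue field $V/\mfrak$. The subfield $S$ itself is a section of the residue map, and it is not definable in $R^{\mathrm{Sh}}$: externally definable subsets of $R$ are finite unions of convex sets, and $S$ is not. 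Your relative-QE route, in turn, leaves open exactly the nontrivial step, namely reducing terms $\st(t(x_1,\ldots,x_n))$ to conditions on finitely many \emph{unary} invariants. Lemma~\ref{lem:ka} and Proposition~\ref{prop:prekey} only count and transport witnessing families, so they do not touch this issue.

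Second, the lower bound for transseries. The functions $a\mapsto\st(\der^{(n)}a)$ cannot satisfy the hypothesis of Lemma~\ref{lem;g}. In $\T$, and in every model of its theory, the derivation is small and the convex hull of the constants is $V=\R+\mfrak$. So if $\st(a)$ is defined, then $a=c+\epsilon$ with $c$ constant and $\epsilon\prec 1$. Hence $a'=\epsilon'\prec 1$, and inductively $\st(\der^{(n)}a)=0$ for all $n\ge 1$. The surjectivity you hoped to get from newtonianity is therefore false.

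\textbf{The missing idea.} The constant field of $(\T,\der)$ is $\R$, so $(\T,\der)$ defines the pair $(\T,\R)$. This is a tame pair because $\R$ is Dedekind complete, and since the theory of tame pairs is complete, it is interpretable in the theory of $(\T,\der)$. This closes both gaps at once. Tame pairs are interpretable in the transseries, and the transseries are trace definable in $D^{\aleph_0}(\rcf)$. Conversely, $D^{\aleph_0}(\rcf)$ is trace definable in tame pairs, hence in the transseries. So you need only one upper bound (for $\T$) and one lower bound (for tame pairs); this is how the paper argues.

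For the upper bound, your plan is reasonable: take the jets $\der^{(n)}$ as witnesses, use the Aschenbrenner--van den Dries--van der Hoeven quantifier elimination, and absorb the convex predicates into $\T^{\mathrm{Sh}}$ via Fact~\ref{fact:she}. The paper obtains the same witnesses directly from \cite[Lemma~16.5.5]{trans} and then applies Proposition~\ref{prop:key}.

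For the lower bound, Proposition~\ref{prop;ginsep} does not apply verbatim, since your map lands in $S^2$ rather than back in its domain. Instead, iterate $\sigma(a)=(a-\st a)/\epsilon$ and put $g_n=\st\circ\sigma^{(n)}$, with a default value where undefined. Then $a=\sum_{i\le n}b_i\epsilon^i$ gives $g_i(a)=b_i$ for all $i\le n$ in \emph{any} tame pair. No saturation or extra archimedean classes are needed. This is exactly the paper's coefficient extraction in $(\pusix,\R)$, fed into Lemma~\ref{lem;g}.
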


Combining with Corollary~\ref{cor:dense pair} we see that the theory of tame pairs of real closed fields trace defines the theory of dense pairs of real closed fields but not vice versa.

\begin{proof}
Let $(\T,\der)$ be the ordered differential field of transseries, so in particular $\T$ is a real closed field extending $\R$ and $\der$ is a derivation $\T \to \T$ with constant subfield $\R$.
Therefore $(\T,\der)$ interprets $(\T,\R)$, so the theory of tame pairs of real closed fields is interpretable in the transseries.
It follows directly from \cite[Lemma~16.5.5]{trans} that there is a real closed field $R$ extending $\T$ such that $\{ \der^{(n)} : n \in \N \}$, when considered as a collection of functions $\T \to R$, witnesses  local trace definability of $(\T, \der)$ in $R$.

\medskip
We show that the theory of tame pairs of real closed fields trace defines $D^{\aleph_0}(\rcf)$.
Let $\pusix$ be the field of Puiseux series over $\R$ in the variable $\varepsilon$.
Now $\pusix$ is a real closed field, hence $(\pusix,\R)$ is a tame pair of real closed fields.
We apply Lemma~\ref{lem;g}.
Let $\st$ be the usual standard part map $\pusix\to\R\cup\{\pm\infty\}$ and note that $\st$ is definable in $(\pusix,\R)$.
We  define a function  $g_n\colon \pusix\to \R\cup\{\pm\infty\}$ for each $n$ by applying induction on $n$.
Let $g_0=\st$.
Given $n\ge 1$ let $g_n(\alpha) = g_{n - 1}(\alpha)$  if $g_{n - 1}(\alpha) \in \{\infty,-\infty\}$ and otherwise let
\begin{align*}
g_n(\alpha) &= \st\left(\frac{\alpha - g_0(\alpha)-g_1(\alpha)\varepsilon-\cdots-g_{n-1}(\alpha)\varepsilon^{n-1}}{\varepsilon^n}\right)\\
&= \sup\{\beta\in\R : g_0(\alpha) + g_1(\alpha)\varepsilon+\cdots+g_{n-1}(\alpha) \varepsilon^{n-1} + \beta \varepsilon^n < \alpha\}.  
\end{align*}
It follows by applying induction that each $g_n$ is definable in $(\pusix,\R)$.
Now observe that if $(b_i)_{i\in\N}$ is any sequence of real numbers then
$$g_i\left(\sum_{m \in\N} b_m\varepsilon^m\right) = b_i \quad\text{for all $i\in \N$}.$$
An application of Lemma~\ref{lem;g} shows that $\Th(\pusix,\R)$ trace defines $D^{\aleph_0}(\rcf)$.
\end{proof}

\bibliographystyle{abbrv}
\bibliography{NIP}
\end{document}